\newcommand{\p}{\mathbb{P}}
\newcommand{\F}{\mathbb{F}}
\newcommand{\PGL}{\mbox{\rm PGL}}
\newcommand{\lcm}{\mbox{\rm lcm}}
\newcommand{\PSL}{\mbox{\rm PSL}}
\newcommand{\PGU}{\mbox{\rm PGU}}
\newcommand{\GL}{\mbox{\rm GL}}
\newcommand{\ff}{\mathcal{F}}
\newcommand{\fq}{\mathbb{F}_q}
\newcommand{\fqq}{\mathbb{F}_{q^2}}
\newcommand{\kk}{\mathbb K}
\newcommand{\xx}{\mathcal X}
\newcommand{\cc}{\mathcal C}
\newcommand{\aut}{\operatorname{Aut}}
\newcommand{\tr}{\operatorname{Tr}}
\newcommand{\nr}{\operatorname{N}}
\newcommand{\gal}{\operatorname{Gal}}
\def\div{\mbox{\rm div}}
\theoremstyle{plain}
\newtheorem{thm}{Theorem}[section]
\newtheorem{prop}[thm]{Proposition}
\newtheorem{lem}[thm]{Lemma}
\newtheorem{rem}[thm]{Remark}
 \font\numberfont= pzcmi scaled
\titleformat{\chapter}[display]
  {\normalfont\Large 
  }
  {
   \filright
   \rule[32pt]{.7\linewidth}{4pt}
   \hspace{-8pt}
   \shadowbox{
   \begin{minipage}{.15\linewidth}
     \begin{center}
          \textsl{\bf {\large \chaptertitlename}}\\
       \vspace{1ex}
       {\bf {\numberfont \thechapter}}\\
       \vspace{1ex}
     \end{center}
   \end{minipage}}
  }
  {-10pt}
  {\filcenter
           \sl
           \bf
              \Huge
     }
  [\vspace{-1cm}\singlespacing\hfill\rule{.8\textwidth}{0.5pt}\\
\vskip-2.8ex\hfill\rule{.7\textwidth}{4pt}\onehalfspacing\vspace*{-1ex}]
\titlespacing{\chapter}{0pt}{*4}{*1}
\titleformat{\section}[block]
{\normalfont\bfseries} {\thesection}{0.5em}{}
\titleformat{\subsection}[block]
{\normalfont\large\bfseries} {\thesubsection}{0.5em}{}
\numberwithin{equation}{section}
\begin{document}

\title{On generalizations of Fermat curves over finite fields and their automorphisms}

\author{\textbf{Nazar Arakelian} \\
 \small{CMCC, Universidade Federal do ABC, Santo Andr\'e, Brazil} \\
 \textbf{Pietro Speziali} \\
 \small{ Univesit\`a Degli Studi della Basilicata, Potenza, Italy}}

\maketitle
\begin{abstract}
Let $\xx$ be an irreducible algebraic curve defined over a finite field $\fq$ of characteristic $p>2$. Assume that the $\fq$-automorphism group of $\xx$ admits as an automorphism group the direct product of two cyclic groups $C_m$ and $C_n$ of orders $m$ and $n$ prime to $p$ such that both quotient curves $\xx/C_n$ and $\xx/C_m$ are rational. In this paper, we provide a complete classification of such curves, as well as a characterization of their full automorphism groups.

\end{abstract}

\section{Introduction} \label{intro}
One of the leading problems of algebraic geometry is the classification of algebraic varieties. As most leading problems, it is largely unsolved. This holds true even if we restrict ourselves to $1$-dimensional varieties, that is, algebraic curves. The essential tool in pursuing the goal of classifying (projective, nonsingular, geometrically irreducible, algebraic) curves is the study of their birational invariants, such as their genus and automorphism group.
Since a general curve has trivial automorphism group, any curve $\xx$ with an automorphism group $\aut(\xx) \neq \{1\}$ is of particular interest. Further, curves equipped  with a large automorphism group have a rich and interesting geometry. 
When the characteristic of the ground field $\mathbb{K}$ is some prime $p > 0$, several exceptions to the classical Hurwitz bound for the order of $\aut(\xx)$ are found, yielding classes of curves with particularly interesting properties. However, even in such exceptional cases, the automorphism group alone is not enough to characterize a curve, since Madden and Valentini \cite{MV} proved that for  for any finite group $G$  there exists infinitely many  non-isomorphic algebraic curves whose full automorphism group is isomorphic to $G$. Remarkably, in some cases it is possible to characterize a curve $\xx$ in terms of its automorphism group and genus. This happens for instance for the Hermitian curve, the Deligne-Lusztig-Suzuki curve and the Artin-Mumford curve; see  \cite{AK, HKT, RS}. 

Following this idea, one may ask which curves have a certain group $G$ as a subgroup of $\aut(\xx)$ with some extra condition on the action of  $G$ on the points of $\xx$. The classification problem becomes even more challenging when considering curves defined over some finite field $\fq$ of order $q=p^h$. This case is also of interest in view of applications to Coding Theory and Finite Geometry.  

In this paper, we classify all curves $\xx$ defined over a finite field $\fq$ of characteristic $p >2$ satisfying the following property: 

\begin{itemize}
\item [(P)] The $\fq$-automorphism group $\aut_{\fq}(\xx)$ of $\xx$ contains a subgroup $G=C_n \times C_m$, where $C_i$ denotes a cyclic group of order $i$ prime to $p$, such that $\max\{n,m\}>2$ and both quotient curves $\xx/C_n$ and $\xx/C_m$ are rational. 
\end{itemize}

If $n = m$ and  the $G$-short orbits are $\fq$-rational (that is, preserved by the $\fq$-Frobenius automorphism $\Phi_q$), a curve satisfying (P) is  a generalized Fermat curve  as introduced by Fanali and Giulietti in \cite{FGGFC}.  Thus, we will sometimes refer to a curve satisfying (P) as a generalized Fermat curve. It should be noted that the same term is used in the literature to describe similar yet rather different curves; see \cite{FGGFC, {HKLP}}.

As our main result, we provide the complete classification of the curves satisfying (P) and their full automorphism groups. 


\section{Background and preliminary results}\label{back}

Our notation and terminology are standard. Well-known references for the theory of curves and algebraic function fields are \cite{HKT} and \cite{stbook}. Let $\xx$ be a curve defined over some finite field $\fq$ of size $q = p^h$ for some prime $p$; then $\xx$ is viewed as a curve over the algebraic closure $\mathbb{K}$ of $\fq$. We denote by $\mathbb{K}(\xx)$ the function field of $\xx$. By a point $P \in \xx$ we mean a point in a nonsingular model of $\xx$; in this way, we have a one-to-one correspondence between points of $\xx$ and places of $\kk(\xx)$. Let $\aut_\kk(\xx)$ denote the full automorphism group of $\xx$. For a subgroup $S$ of $\aut_\kk(\xx)$, we denote by $\kk(\xx)^S$ the fixed field of $S$. A nonsingular model $\bar{\xx}$ of  $\kk(\xx)^S$ is referred as the quotient curve of $\xx$ by $S$ and denoted by $\xx/S$. Note that $\xx/S$ is defined up to birational equivalence. The field extension $\kk(\xx):\kk(\xx)^S$ is Galois with Galois group $S$. For a point $P \in \xx$, $S(P)$ is the orbit of $P$ under the action of $S$ on $\xx$ seen as a point-set. The orbit $S(P)$ is said to be long if $|S(P)| = |S|$, short otherwise. There is a one-to-one correspondence between short orbits and ramified points in the extension $\kk(\xx):\kk(\xx)^S$. It might happen that $S$ has no short orbits; if this is the case, the cover $\xx \rightarrow \xx/S$ (or equivalently, the extension $\kk(\xx):\kk(\xx)^S$) is unramified. On the other hand, $S$ has a finite number of short orbits. 

For $P \in \xx$, the subgroup $S_P$ of $S$ consisting of all elements of $S$ fixing $P$ is called the stabilizer of $P$ in $S$.  For a non-negative integer $i$, the $i$-th ramification group of $\xx$ at $P$ is denoted by $S_P^{(i)}$, and defined by
$$
S_P^{(i)}=\{\sigma \ | \ v_P(\sigma(t)-t)\geq i+1, \sigma \in S_P\}, 
$$
 where $t$ is a local parameter at $P$ and $v_P$ is the respective discrete valuation. Here $S_P=S_P^{(0)}$. Furthermore, $S_P^{(1)}$ is a normal $p$-subgroup of $S_P^{(0)}$, and the factor group $S_P^{(0)}/S_P^{(1)}$ is cyclic of order prime to $p$; see e.g. \cite[Theorem 11.74]{HKT}. In particular, if $S_P$ is a $p$-group, then $S_P=S_P^{(0)}=S_P^{(1)}$. 

Let $g$ and $\bar{g}$ be the genus of $\xx$ and $\bar{\xx}=\xx/S$, respectively. The Riemann-Hurwitz genus formula is 
\begin{equation}\label{rhg}
2g-2=|S|(2\bar{g}-2)+\sum_{P \in \xx}\sum_{i \geq 0}\big(|S_P^{(i)}|-1\big);
\end{equation}
see \cite[Theorem 11.72]{HKT}.
 If $\ell_1,\ldots,\ell_k$  are the sizes of the short orbits of $S$, then (\ref{rhg}) yields
\begin{equation}\label{rhso}
2g-2 \geq |S|(2\bar{g}-2)+\sum_{\nu=1}^{k} \big(|S|-\ell_\nu\big),
\end{equation}
and equality holds if $\gcd(|S_P|,p)=1$ for all $P \in \xx$; see \cite[Theorem 11.57 and Remark 11.61]{HKT}.

The following result (see \cite[Proposition 1]{Ko2}) will be used in Section \ref{full}.

\begin{prop}[Kontogeorgis]\label{konto1}
Let $\F_0$ be a rational function field over $\kk$. Suppose that a cyclic extension $\F$ of $\F_0$ is completely ramified  at $s$ places and $r = |\gal(\F:\F_0)|$. If $2r < s$ then $\gal(\F : \F_0)$ is normal on  the full automorphism group $\aut_{\kk}(\F)$ of $\F$ .
\end{prop}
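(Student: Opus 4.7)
My approach is by contradiction. Set $H:=\gal(\F/\F_0)$, $G:=\aut_\kk(\F)$, and suppose $H$ is not normal in $G$. Then there is $\sigma\in G$ with $H':=\sigma H\sigma^{-1}\neq H$. Writing $S$ for the $s$ places of $\F$ above the totally ramified places of $\F_0$, complete ramification identifies $S$ with the fixed-point set $\fix(H)$; consequently $S':=\fix(H')=\sigma(S)$ also has cardinality $s$.

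The first key claim is $S\cap S'=\emptyset$. A common fixed point $P$ would force both $H$ and $H'$ into the stabilizer $G_P$; in the tame setting relevant to the paper, where $r$ is prime to $p$, the stabilizer $G_P$ (or at least its tame quotient $G_P^{(0)}/G_P^{(1)}$, recalled in the preliminaries) is cyclic, and $H,H'$ embed into a cyclic group of order prime to $p$. Two distinct subgroups of order $r$ cannot both sit in a cyclic group, so $H=H'$, a contradiction. Next I would apply the Riemann-Hurwitz formula to $\F/\F^L$, where $L:=\langle H,H'\rangle$. Since $\F^L=\F^H\cap\F^{H'}\subseteq\F_0$ is a subfield of a rational function field, L\"uroth makes it rational, and its genus is $0$. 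Tameness and the $2s$ disjoint points in $S\sqcup S'$, each with $|L_P|\geq r$, give
\[
2g-2\;\geq\;-2|L|+2s(r-1).
\]
Combined with $g=(r-1)(s-2)/2$, obtained by Riemann-Hurwitz on $\F/\F_0$ (totally ramified only at the $s$ prescribed places), and using $s>2r$, this yields the lower bound $|L|>r^2$.

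The main obstacle is then to turn this lower bound into a contradiction. My plan is to study the intermediate extension $\F^H/\F^L$, a degree-$|L|/r$ cover of rational function fields: Riemann-Hurwitz on $\P^1\to\P^1$ gives $\sum_{Q}(e_Q-1)=2(|L|/r-1)$, where each image $Q_i$ of $P_i\in S$ is ramified with index $|L_{P_i}|/r$. A symmetric analysis of $\F^{H'}/\F^L$ together with a careful matching of the ramification data at the $Q_i$ and their counterparts $\sigma(Q_i)$ should force an upper bound $|L|\leq r^2$, completing the contradiction. The delicate step is showing that sufficiently many of the $Q_i$ are actually ramified in $\F^H/\F^L$; verifying that the hypothesis $s>2r$ is precisely what is needed to sustain this ramification count is where I expect the main technical work to lie.
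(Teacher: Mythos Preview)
The paper does not prove this proposition at all: it is quoted verbatim as \cite[Proposition 1]{Ko2} and used as a black box later (e.g.\ in Lemma~\ref{cmnormal}). So there is no ``paper's own proof'' to compare against; if you want the actual argument you need to consult Kontogeorgis's paper.

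As for your sketch on its own merits: the opening moves are reasonable, but the argument is genuinely incomplete and rests on assumptions the statement does not make. First, your disjointness step $S\cap S'=\emptyset$ uses that stabilizers are cyclic, which you justify via tameness; the proposition as stated does not assume $\gcd(r,p)=1$, so you are proving a weaker statement (admittedly the one the paper actually needs). Second, your genus formula $g=(r-1)(s-2)/2$ presumes that the $s$ totally ramified places are the \emph{only} ramified places of $\F/\F_0$; ``completely ramified at $s$ places'' need not exclude further (partially) ramified places, and any extra ramification would raise $g$ and loosen your inequalities. Third, and most importantly, the endgame---showing $|L|\le r^2$ via the ramification of the rational subcovers $\F^H/\F^L$ and $\F^{H'}/\F^L$---is not carried out. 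You note that each image $Q_i$ of $P_i\in S$ has ramification index $|L_{P_i}|/r$ in $\F^H/\F^L$, but nothing so far prevents $|L_{P_i}|=r$ (i.e.\ $Q_i$ unramified there), and you have not explained why the hypothesis $s>2r$ forces enough of these indices to exceed $1$ to cap $|L|$. Until that counting is made precise, the contradiction does not close.
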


Let $\Phi_q:\xx \rightarrow \xx$ denote the $\fq$-Frobenius map. An automorphism $\sigma \in \aut_\kk(\xx)$ is said to be $\fq$-rational if it commutes with $\Phi_q$. A subgroup $S$ of $\aut_\kk(\xx)$ is $\fq$-rational if every element of $S$ commutes with $\Phi_q$. The subgroup of $\aut_{\kk}(\xx)$ consisting of all $\fq$-rational automorphisms is called the $\fq$-automorphism group of $\xx$, and it is denoted by $\aut_{\fq}(\xx)$. Note that $\xx/S$ is defined over $\fq$ for all $S< \aut_{\fq}(\xx)$.


\section{Cyclic subcovers of the projective line}

The function field $\kk(\cc)$ of a rational curve $\cc$ is such that $\mathbb{F} = \kk(x)$ for some rational function $x \in \kk(\cc)$. Since $\ff$ is birationally equivalent to $\mathbb{P}^1(\kk)$, we have that $\aut_\kk(\cc) \cong \PGL(2,\kk)$. If $\cc$ is defined over $\fq$, then  $\aut_{\fq}(\cc) \cong \PGL(2,q)$.  
We are interested in  quotients of the projective line arising from tame cyclic subgroups of $\PGL(2,q)$; 
 by Dickson's Hauptsatz \cite[Theorem 3]{VM}, such groups have order $k$ a divisor of $q\pm 1$.

 Let $\F$ be a rational function field over $\fq$. Consider a cyclic extension $\F:\F^{'}$, where $\F^{'}$ is a subfield of $\F$ defined over $\fq$. By L\"uroth's Theorem, $\F^{'}$ is  rational as well; see \cite[Theorem 3.5.9]{stbook}. 

\begin{prop}\label{n|q-1}
Let $\F$ be a rational function field over $\fq$, where $q=p^h$, with $p>2$. Let $\F^{'}$ be a subfield of $\F$ such that the extension $\F : \F^{'}$ is cyclic of degree $n$ prime to $p$, with $n|q-1$. Assume that the ramified places of $\F : \F^{'}$ are $\fq$-rational. Then there exists $x \in \F$ such that $\F=\fq(x)$ and $\F^{'}=\fq(x^n)$.
\end{prop}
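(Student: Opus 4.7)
The plan is to use Riemann--Hurwitz to pin down the ramification structure of $\F : \F^{'}$, then to choose a generator $x$ of $\F$ that places the ramified points at $0$ and $\infty$, and finally to identify the Galois action on $x$ using the classification of $\fq$-automorphisms of a rational function field fixing two rational places.

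First I would apply the genus formula to the extension $\F : \F^{'}$. Both fields have genus $0$ (by L\"uroth) and, since $\gcd(n,p)=1$, the extension is tame, so contributions come only from the $0$-th ramification groups. Denoting by $Q_1,\dots,Q_r$ the ramified places of $\F^{'}$ with ramification indices $e_1,\dots,e_r$, Riemann--Hurwitz gives
\[
-2 = -2n + \sum_{i=1}^{r} \frac{n}{e_i}(e_i-1), \qquad \text{i.e.} \qquad \sum_{i=1}^{r}\Bigl(1-\tfrac{1}{e_i}\Bigr) = 2-\tfrac{2}{n}.
\]
Since $e_i\mid n$, one has $1-1/e_i\ge 1-1/n$, and a case analysis on $r$ forces $r=2$ with $e_1=e_2=n$. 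Hence $\F : \F^{'}$ has exactly two ramified places of $\F^{'}$, each totally ramified, with unique places $P_1,P_2$ of $\F$ above them.

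Next I would exploit the hypothesis that $P_1$ and $P_2$ are $\fq$-rational. Since $\F$ is a rational function field defined over $\fq$, I can choose $x\in\F$ with $\F=\fq(x)$, $\operatorname{div}_0(x)=P_1$ and $\operatorname{div}_\infty(x)=P_2$; this uses only the standard fact that $\PGL(2,q)$ acts transitively on ordered pairs of distinct $\fq$-rational places of $\fq(x)$. A generator $\sigma$ of $\gal(\F:\F^{'})$ is an $\fq$-automorphism of $\fq(x)$ fixing both $P_1$ and $P_2$; the elements of $\PGL(2,q)$ fixing $0$ and $\infty$ are precisely the maps $x\mapsto \lambda x$ with $\lambda\in\fq^{*}$. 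Therefore $\sigma(x)=\lambda x$ for some $\lambda\in\fq^{*}$ of order exactly $n$; such a $\lambda$ exists because $n\mid q-1$.

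Finally, I would identify $\F^{'}=\F^{\langle\sigma\rangle}$. The function $x^n$ is $\sigma$-invariant and $[\fq(x):\fq(x^n)]=n=[\F:\F^{'}]$, whence $\F^{'}=\fq(x^n)$. The one step that requires a little care is the Riemann--Hurwitz computation, since one must rule out the possibility $r\ge 3$ and confirm total ramification, but beyond that the proof is a clean descent onto the normal form $x\mapsto \lambda x$ afforded by Dickson's classification.
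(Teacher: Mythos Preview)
Your approach and the paper's coincide in essence: both put a generator $\sigma$ of $\gal(\F:\F')$ into the normal form $\sigma(x)=\lambda x$ with $\lambda\in\fq^{*}$ a primitive $n$-th root of unity, and then read off $\F'=\fq(x^n)$. The paper does this in two lines, simply asserting the existence of such an $x$ (implicitly using that a cyclic element of $\PGL(2,q)$ of order $n\mid q-1$ is conjugate over $\fq$ to a scaling); you take a detour through Riemann--Hurwitz to locate the fixed points of $\sigma$ first.

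That detour is where your write-up slips. The inequality you state is backwards: $e_i\mid n$ gives $e_i\le n$, hence $1-1/e_i\le 1-1/n$, not $\ge$. More importantly, the numerical Riemann--Hurwitz constraint alone does \emph{not} force $r=2$: for instance when $n=4$ the equation $\sum_i(1-1/e_i)=3/2$ admits the solution $r=3$, $e_1=e_2=e_3=2$ (realised by a Klein-four cover of $\P^1$, though not by a $C_4$-cover). What actually rules out $r\ge 3$ is the group-theoretic fact that any non-identity element of $\PGL(2,\kk)$ of order prime to $p$ is diagonalisable and hence fixes exactly two points of $\P^1(\kk)$; applied to $\sigma$ this yields the two totally ramified places directly, and by hypothesis they are $\fq$-rational. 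Once you invoke that fact---which you effectively do in your next paragraph anyway when you analyse the stabiliser of $\{0,\infty\}$ in $\PGL(2,q)$---the Riemann--Hurwitz step becomes redundant, and your argument collapses to the paper's.
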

\begin{proof}
Let $\sigma \in \aut_{\fq}(\F)$ be a generator of a cyclic subgroup of $\aut_{\fq}(\F)$ of order $n$. Since $n|q-1$, then $\fq$ has a $n$-th primitive root of the unity $\zeta$. There exists $x \in \F$ with exactly one zero (and one pole) defined over $\fq$ such that $\sigma(x) = \zeta x$. Then $\sigma(x^n) = x^n$ holds. Clearly $\F = \fq(x)$, whence our assertion follows. 
\end{proof}

Now let us consider the case $[\F:\F^{'}]=n | q+1$. So let $x \in \F$ such that $\F=\fq(x)$, and let $G$ be a cyclic subgroup of $\aut_{\fq}(\fq(x))$ with $|G|=n$ such that $n|q+1$. Fix a nonsquare element $s \in \fq$ and define $\mathbb{F}_{q^2}$ as an extension $\fq(i)$ of $\fq$ with $i \in \F_{q^2}$ such that $i^2=s$. 
Then $\F_{q^2}=\{a+bi \  | \  a,b \in \fq\}$.

For $\alpha=a+bi \in \F_{q^2}^{*}$, set
$$M_\alpha=\left(
           \begin{array}{cc}
             a & sb \\
             b & a \\
           \end{array}
         \right) \in \GL(2,q).
$$
The map $\alpha \mapsto M_\alpha$ is a monomorphism from the multiplicative group $\F_{q^2}^{*}$ to $\GL(2,q)$. Let $\lambda \in \F_{q^2}$ be a primitive $(2n)$-th root of the unity. Then the subgroup $\langle M_\lambda\rangle$ of $\GL(2,q)$ is cyclic of order $2n$. The natural group homomorphism $\varphi: \GL(2,q)\rightarrow \PGL(2,q)$ is surjective
and $\ker \varphi$ consists of all scalar matrices. Via a simple computation, one can show that $\ker \varphi \cap \langle M_\lambda\rangle = \{M_1,M_{-1}\}$. Hence $\varphi$ maps $\langle M_\lambda\rangle$ to a subgroup $C$ of $\PGL(2,q)$ of order $n$. Note that the fixed points of $C$ are $(i:1)$ and $(-i:1)$. From the classification of subgroups of $\PGL(2,q)$ we know that there exists only one class of cyclic subgroups of order $n$ fixing points not defined over $\fq$ \footnote{The only situation in which a cyclic subgroup of $\PGL(2,q)$ of order $n|q+1$ fixes an $\fq$-rational point is $n=2$. }. Therefore, we have the following result.

\begin{prop}\label{repr}
Let $G$ be a cyclic subgroup of $\aut_{\fq}(\fq(x))$ with $|G|=n$ fixing no $\fq$-rartional place such that $n|q+1$. Then, up to conjugacy, $G=\langle \tau \rangle$, where
\begin{equation}\label{auto}
\tau(x)=\frac{ux+sv}{vx+u},
\end{equation}
with $u+iv \in \F_{q^2}$ being a primitive $(2n)$-th root of the unity.
\end{prop}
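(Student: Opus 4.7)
The plan is to reduce everything to the classification of cyclic subgroups of $\PGL(2,q)$ and then read off the explicit generator from the construction carried out in the paragraph immediately preceding the statement.

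First I would identify $\aut_{\fq}(\fq(x))$ with $\PGL(2,q)$ in the usual way, so that $G$ becomes a cyclic subgroup of $\PGL(2,q)$ of order $n$ with $n\mid q+1$. By the classification of subgroups of $\PGL(2,q)$ (Dickson's Hauptsatz, cited in the paper as \cite[Theorem 3]{VM}) together with the observation recalled just before the statement, cyclic subgroups of order $n\mid q+1$ that fix no $\fq$-rational point form a single conjugacy class in $\PGL(2,q)$, the exceptional case $n=2$ being ruled out by the hypothesis on short orbits together with the footnote.

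Next I would exhibit one representative of this conjugacy class in the form claimed. Pick $\lambda=u+iv\in\fqq$ a primitive $(2n)$-th root of unity; this exists because $2n\mid q^2-1$ as $n\mid q+1$. The matrix
\[
M_\lambda=\begin{pmatrix} u & sv\\ v & u\end{pmatrix}\in\GL(2,q)
\]
has order $2n$, and by the discussion preceding the statement $\ker\varphi\cap\langle M_\lambda\rangle=\{M_1,M_{-1}\}$, so its image $\varphi(M_\lambda)\in\PGL(2,q)$ has order exactly $n$ and generates a cyclic group whose fixed points are $(\pm i : 1)$, neither of which is $\fq$-rational. The associated Möbius transformation is precisely
\[
\tau(x)=\frac{ux+sv}{vx+u}.
\]

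Finally, since $G$ and $\langle\tau\rangle$ are cyclic subgroups of $\PGL(2,q)$ of the same order $n\mid q+1$ and neither fixes an $\fq$-rational point, the classification yields $\rho\in\PGL(2,q)\cong\aut_{\fq}(\fq(x))$ with $\rho G\rho^{-1}=\langle\tau\rangle$. Replacing $x$ by $\rho(x)$ gives the desired normal form. The only delicate point is the invocation of the uniqueness of the conjugacy class of such cyclic subgroups; this is standard, and the footnote already isolates the unique exceptional behavior at $n=2$, which is incompatible with our hypothesis.
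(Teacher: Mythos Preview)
Your proof is correct and follows exactly the paper's approach: the proposition is stated as an immediate consequence of the preceding paragraph, which constructs the explicit representative $\langle\tau\rangle$ via the embedding $\alpha\mapsto M_\alpha$ and then invokes the uniqueness (up to conjugacy) of cyclic subgroups of $\PGL(2,q)$ of order $n\mid q+1$ with non-$\fq$-rational fixed points. One small correction: the hypothesis does not exclude $n=2$---the footnote only says that $n=2$ is the unique case in which such a subgroup \emph{could} fix an $\fq$-rational point---but involutions with non-rational fixed points still form a single conjugacy class, so your argument goes through unchanged.
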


\begin{prop}\label{ffq+1}
Let $\F$ be a rational function field over $\fq$, where $q=p^h$, with $p>2$. Let $\F^{'}$ be a subfield of $\F$ defined over $\fq$ such that the extension $\F : \F^{'}$ is cyclic of order $n$ prime to $p$ with no ramified $\fq$-rational place, with $n|q+1$. Then there exists $x \in \F$ such that $\F=\fq(x)$ and $\F^{'} = \fq(z)$ with $z$ given by
\begin{equation}
z = \frac{i[(x+i)^n-(x-i)^n]}{(x+i)^n+(x-i)^n}.
\end{equation}
\end{prop}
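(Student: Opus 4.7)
The plan is to diagonalise the generator $\tau$ of $\gal(\F:\F')$ over the quadratic extension $\fqq$ via a Möbius substitution, produce a simple $\tau$-invariant there, and then descend it to an $\fq$-rational element by symmetrising under $\gal(\fqq/\fq)$.

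By Proposition \ref{repr}, choose a generator $x$ of $\F$ over $\fq$ so that $\gal(\F:\F') = \langle \tau \rangle$ with $\tau(x) = (ux + sv)/(vx + u)$ and $\alpha := u + iv$ a primitive $(2n)$-th root of unity in $\fqq^*$. The first step is to introduce $y := (x+i)/(x-i) \in \fqq(x)$, the Möbius map sending the $\fqq$-rational fixed points $\pm i$ of $\tau$ to $\{0,\infty\}$. Using $s = i^2$, a short calculation gives
\[
\tau(x) + i = \frac{(u+iv)(x+i)}{vx+u}, \qquad \tau(x) - i = \frac{(u-iv)(x-i)}{vx+u},
\]
so $\tau(y) = (\alpha/\bar\alpha)\, y$. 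Since $\alpha$ has order $2n$, one has $\alpha^n = \bar\alpha^n = -1$, and hence $\tau(y^n) = y^n$.

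For the descent to $\fq$, note that the Frobenius $\phi \in \gal(\fqq/\fq)$ fixes $x$ and sends $i \mapsto -i$, so $\phi(y) = 1/y$. The element
\[
z := i\,\frac{y^n - 1}{y^n + 1} = \frac{i\bigl[(x+i)^n - (x-i)^n\bigr]}{(x+i)^n + (x-i)^n}
\]
is $\tau$-invariant (being a rational function of $y^n$) and satisfies $\phi(z) = z$, so $z \in \F$ and $z \in \F'$, giving $\fq(z) \subseteq \F'$. A binomial expansion makes the $\fq$-rationality of the displayed expression explicit.

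To close, I would show that $\F' = \fq(z)$ by a degree count. The numerator $P(x) = i[(x+i)^n - (x-i)^n]$ and denominator $Q(x) = (x+i)^n + (x-i)^n$ of $z$ lie in $\fq[x]$ and are coprime (any common root would equal both $i$ and $-i$), with $\deg P = n-1$ and $\deg Q = n$; hence $[\fq(x) : \fq(z)] = n = [\F:\F']$ and equality follows. The main obstacle, to my mind, is the initial observation: recognising that the natural change of variable $y = (x+i)/(x-i)$, suggested by the $\fqq$-rational fixed points of $\tau$ noted after Proposition \ref{repr}, converts the cyclic action into multiplication by a root of unity. Once this is in hand, the rest of the argument is essentially forced.
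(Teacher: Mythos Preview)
Your proof is correct and follows essentially the same approach as the paper: the paper introduces $h(x)=(x-i)^n/(x+i)^n$ (your $y^{-n}$), verifies $\tau$-invariance via $(u-iv)^n=(u+iv)^{qn}=(u+iv)^n$, and then forms the same $z=i(h-1)/(h+1)$ to descend to $\fq$. The only cosmetic difference is that the paper carries out the degree count over $\fqq$ (showing $\fqq(h)=\fqq(x)^{\langle\tau\rangle}$ and $[\fqq(h):\fqq(z)]=1$) rather than directly over $\fq$ via the coprimality argument you give.
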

\begin{proof}
Let $\tau \in \aut_{\fq}(\F)$ be such that $\F^{'}=\F^{\langle \tau \rangle}$. Let $x \in \F$ such that $\F=\fq(x)$ and $\tau$ is defined on $\F$ by (\ref{auto}). 
 Consider the $\fqq$-rational function $h(x)$ given by
\begin{equation}
h(x) = \frac{(x-i)^n}{(x+i)^n}.
\end{equation}
 
A straightforward computation shows that $\tau(h(x)) = \frac{(u-iv)^n(x-i)^n}{(u+iv)^n(x+i)^n} = h(x)$ as $(u-iv)^n = (u+iv)^{qn} = (u+iv)^n$. Then $\fqq(h(x)) \subseteq \fqq(x)^{<\tau>}$.  From $n \leq [\fqq(x): \fqq(h(x))] \leq n$ we get $\fqq(h(x)) = \fqq(x)^{<\tau>}$. Let 

\begin{equation}
z = i \cdot \frac{h(x)-1}{h(x)+1} = \frac{i[(x-i)^n-(x+i)^n]}{(x-i)^n+(x+i)^n}. 
\end{equation}
Then  $z  \in \fqq(h(x)) \cap \fq(x)$ with $[\fqq(h(x)) : \fqq(z)] = 1$, that is, $\F^{'} = \fq(z)$.
\end{proof}

\begin{rem}
Let $\F$ and $\F^{'}$ as in Proposition \ref{ffq+1}. We may assume that $\F=\fq(x)$ and $\F^{'}=\F^{\langle \tau \rangle}$, with $\tau$ defined as in (\ref{auto}). It can be shown that $\F^{'}=\fq(\tr(x))$, where $\tr:\F \rightarrow \F^{'}$ is the trace map of the extension $\F:\F^{'}$. In the same way, under the hypothesis of Lemma \ref{n|q-1}, let $\nr:\F \rightarrow \F^{'}$ denote the norm map of the extension $\F:\F^{'}$. It can be shown that $\F=\fq(x)$ and $\F^{'}=\fq(\nr(x))$ for some $x \in \F$. 

\end{rem}

We finish this section with a direct consequence of \cite[Theorem 2]{VM}.

\begin{lem}\label{tec}
Let $\mathcal{Y}$ be a rational curve defined over $\fq$. Suppose that $P$ is fixed by a subgroup $C \subset \aut_{\kk}(\mathcal{Y})$ of order $n$ such that $\gcd(p,n)=1$. If $P$ is $\fq$-rational, then $C$ is cyclic and $n|q-1$. If $P$ is not $\fq$-rational, then it is $\mathbb{F}_{q^2}$-rational and $n|q+1$.
\end{lem}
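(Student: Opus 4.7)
The plan is to identify $\mathcal{Y}$ with $\mathbb{P}^1$ via an $\fq$-birational isomorphism, so that $\aut_{\kk}(\mathcal{Y})\cong\PGL(2,\kk)$ and the $\fq$-rational subgroup is $\PGL(2,q)$, and then to read off both conclusions from Dickson's classification of tame cyclic subgroups of $\PGL(2,q)$ quoted as \cite[Theorem 2]{VM}. Because $C$ has order coprime to $p$ and fixes $P$, it sits inside the Borel subgroup at $P$, whose unipotent radical is a $p$-group; hence $C$ embeds into the Levi factor $\kk^*$ and is automatically cyclic. The $\fq$-rationality of $C$, implicit in the lemma and needed to bound $n$ by $q\pm1$, will enter through the fact that every element of $C$ commutes with the Frobenius $\Phi_q$.

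If $P\in\mathcal{Y}(\fq)$, I would conjugate inside $\PGL(2,q)$ so that $P=\infty$. The tame part of the stabilizer of $\infty$ in $\PGL(2,q)$ is the split diagonal torus $\fq^{*}$, cyclic of order $q-1$, and $C$ embeds into it, so $n\mid q-1$. If $P\notin\mathcal{Y}(\fq)$, then $\Phi_q(P)\neq P$, and because each $c\in C$ commutes with $\Phi_q$, the point $\Phi_q(P)$ is also fixed by every element of $C$. Since a non-identity tame element of $\PGL(2,\kk)$ has exactly two fixed points on $\mathbb{P}^1$, the pair $\{P,\Phi_q(P)\}$ accounts for the entire fixed set of each nontrivial $c\in C$; in particular $\Phi_q^{2}(P)=P$, so $P$ is $\fqq$-rational. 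The pointwise $\PGL(2,q)$-stabilizer of such an $\fqq$-conjugate pair is the non-split torus of order $q+1$, cyclic, exactly as built in Proposition \ref{repr}; hence $n\mid q+1$.

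There is no substantive obstacle here: the lemma is essentially a dictionary translation of Dickson's theorem together with the trivial remark that any finite $p'$-subgroup of a Borel of $\PGL(2,\kk)$ injects into its Levi. The only delicate point to get right is the orbit argument in the non-$\fq$-rational case: it is precisely the fact that $\Phi_q$ must permute the at most two fixed points of any nontrivial element of an $\fq$-rational cyclic group that forces a non-$\fq$-rational fixed point of $C$ to live over $\fqq$ and not in a larger extension.
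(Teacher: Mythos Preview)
Your proposal is correct and aligns with the paper's approach: the paper simply records the lemma as ``a direct consequence of \cite[Theorem 2]{VM}'' without further argument, and your proof is precisely a careful unpacking of that citation, using the Borel/Levi decomposition for cyclicity, the split torus for the $\fq$-rational case, and the Frobenius-orbit argument together with the non-split torus for the non-$\fq$-rational case. You also make explicit the implicit hypothesis that $C$ be $\fq$-rational, which is indeed how the lemma is applied throughout the paper.
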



\section{Geometric properties of generalized Fermat curves}\label{geometric}

In this section, some geometric features of a curve $\xx$ satisfying property (P) are described. In particular,  some results from \cite[Section 3]{FGGFC} are generalized. 

\begin{lem}\label{group1}
Let $\xx$ be a curve satisfying (P). Then  $m| q - 1$ or $m|q+1$, and the same holds for $n$.
\end{lem}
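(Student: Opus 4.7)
The plan is to exploit the rationality of $\xx/C_m$ together with the fact that $C_n$ commutes with $C_m$. Because $G = C_n \times C_m$, the cyclic group $C_n$ normalizes $C_m$ and hence induces an action on the quotient curve $\xx/C_m$ by $\fq$-automorphisms. My first step is to show that this induced action is faithful. Suppose $\sigma \in C_n$ lies in the kernel, so that $\pi \circ \sigma = \pi$ where $\pi : \xx \to \xx/C_m$ denotes the quotient map. Then for every $P \in \xx$ we have $\sigma(P) \in C_m \cdot P$. The finitely many morphisms $\xx \to \xx$ given by the elements of $C_m$ coincide with $\sigma$ on closed subsets of $\xx$ whose union is all of $\xx$; by irreducibility, $\sigma$ must agree with one fixed element $g \in C_m$ on all of $\xx$. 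Hence $\sigma \in C_n \cap C_m$, and since $G = C_n \times C_m$ this intersection is trivial, forcing $\sigma = 1$.

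Having established faithfulness, $C_n$ embeds as a cyclic subgroup of order $n$ into $\aut_{\fq}(\xx/C_m)$. Because $\xx/C_m$ is rational and defined over $\fq$, its $\fq$-automorphism group is isomorphic to $\PGL(2,q)$, as recalled at the beginning of Section 3. Since $\gcd(n,p) = 1$, Dickson's Hauptsatz (\cite[Theorem 3]{VM}, already invoked in the excerpt) tells us that every tame cyclic subgroup of $\PGL(2,q)$ has order dividing either $q-1$ or $q+1$. Therefore $n \mid q-1$ or $n \mid q+1$.

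The argument for $m$ is entirely symmetric: swapping the roles of $C_n$ and $C_m$, the group $C_m$ acts faithfully by $\fq$-automorphisms on the rational curve $\xx/C_n$, and the same application of Dickson's theorem yields $m \mid q-1$ or $m \mid q+1$.

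The only mildly delicate point is the faithfulness step, since a priori an automorphism of $\xx$ that permutes each $C_m$-orbit need not itself lie in $C_m$; however, the rigidity argument above (finitely many algebraic maps agreeing on a cover of $\xx$ by closed subsets) resolves this cleanly. The rest is immediate from Dickson's classification once we are inside $\PGL(2,q)$.
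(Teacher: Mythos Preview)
Your proof is correct and follows essentially the same route as the paper: pass to the rational quotient by one cyclic factor, observe that the other factor embeds into its $\fq$-automorphism group $\PGL(2,q)$, and invoke Dickson's Hauptsatz. The only difference is that you spell out the faithfulness of the induced action (via $C_n \cap C_m = \{1\}$), whereas the paper simply asserts $\tilde{C}_m \cong C_m$; your justification could in fact be shortened by noting that the kernel of $N_{\aut_\kk(\xx)}(C_m) \to \aut_\kk(\xx/C_m)$ is exactly $\gal(\kk(\xx):\kk(\xx)^{C_m}) = C_m$.
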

\begin{proof}
Since $C_m$ normalizes $C_n$, there is a subgroup $\tilde{C}_m$ of $\aut_{\fq}(\xx/C_n)$ such that $\tilde{C}_m \cong C_m$. By $\xx/C_n \cong \p^1(\fq)$, it follows that $\tilde{C}_m$ is isomorphic to a cyclic subgroup of $\PGL(2,q)$. The result follows from \cite[Theorem 3]{VM}.
\end{proof}

\begin{lem}\label{fiberp}
The function field $\fq(\xx)$ of $\xx$ is the compositum of $\fq(\xx/C_n)$ and $\fq(\xx/C_m)$.
\end{lem}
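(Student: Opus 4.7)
The proof is a direct application of the Galois correspondence. Since $G=C_n\times C_m$ is a finite subgroup of $\aut_{\fq}(\xx)$ with order coprime to $p$, it acts faithfully as a group of $\fq$-automorphisms on the function field $\fq(\xx)$. By Artin's theorem, the extension $\fq(\xx)/\fq(\xx)^G$ is Galois with Galois group exactly $G$. Recall from the background section that, since $C_n, C_m<\aut_{\fq}(\xx)$, both quotient curves are defined over $\fq$ and their $\fq$-function fields can be identified with the fixed fields $\fq(\xx)^{C_n}$ and $\fq(\xx)^{C_m}$ inside $\fq(\xx)$.

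Under the Galois correspondence for $\fq(\xx)/\fq(\xx)^G$, the two intermediate fields $\fq(\xx/C_n)=\fq(\xx)^{C_n}$ and $\fq(\xx/C_m)=\fq(\xx)^{C_m}$ correspond respectively to the subgroups $C_n\times\{1\}$ and $\{1\}\times C_m$ of $G$. Under this same correspondence, compositum of subfields corresponds to intersection of subgroups; hence the compositum $\fq(\xx/C_n)\cdot\fq(\xx/C_m)$ is the fixed field of $(C_n\times\{1\})\cap(\{1\}\times C_m)$ inside $G$.

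Finally, because $G$ is a \emph{direct} product, $(C_n\times\{1\})\cap(\{1\}\times C_m)=\{1\}$, and the fixed field of the trivial subgroup is all of $\fq(\xx)$. This gives $\fq(\xx/C_n)\cdot\fq(\xx/C_m)=\fq(\xx)$, as desired. There is no real obstacle here: the only subtlety worth noting is that one must work with $\fq(\xx)$ rather than $\kk(\xx)$, which is legitimate precisely because $G$ consists of $\fq$-rational automorphisms, so the Galois-theoretic identifications take place over $\fq$. A degree count alone would not close the argument when $\gcd(n,m)>1$, which is why invoking the Galois correspondence is essential.
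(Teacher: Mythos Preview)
Your proof is correct and follows essentially the same approach as the paper's own proof: both identify $\fq(\xx/C_n)$ and $\fq(\xx/C_m)$ with the fixed fields $\fq(\xx)^{C_n}$ and $\fq(\xx)^{C_m}$, then use the Galois correspondence to conclude that the compositum has Galois group $C_n\cap C_m=\{1\}$ over $\fq(\xx)$, hence equals $\fq(\xx)$. The paper's version is simply more terse.
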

\begin{proof}
Set $\F=\fq(\xx/C_n) \cdot \fq(\xx/C_m)$. Since $\fq(\xx/C_n)=\fq(\xx)^{C_n}$ and $\fq(\xx/C_m)=\fq(\xx)^{C_m}$, then the extension $\fq(\xx):\F$ is Galois with Galois group 
$$
\gal(\fq(\xx):\F)=\gal(\fq(\xx):\fq(\xx)^{C_n})\cap\gal(\fq(\xx):\fq(\xx)^{C_m})=C_n \cap C_m=\{1\}.
$$
Therefore $\fq(\xx)=\F$.
\end{proof}

We now present the main result of this section.

\begin{prop}\label{genus}
Let $\xx$ be a curve of genus $g$ satisfying (P). Denote by $t$ the number of short orbits of $G$, and by $\ell_1, \ldots \ell_t$ their sizes. Then $m$ divides $q-1$ or $q+1$, $n$ divides $q-1$ or $q+1$, and one of the following holds:
\begin{itemize}
\item [(I)] $t=3$, $\ell_1=m$, $\ell_2=n$, $\ell_3=\gcd(m,n)$, and $g=\frac{mn-m-n-\gcd(m,n)+2}{2}$.
\item [(II)] $t=4$, $\ell_1=\ell_2=m$, $\ell_3=\ell_4=n$, and  $g=mn-m-n+1$.
\end{itemize} 
\end{prop}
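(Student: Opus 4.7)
The plan is to analyze the $G$-orbit structure on $\xx$ via the two quotient maps $\pi_n\colon\xx\to\xx/C_n\cong\P^1$ and $\pi_m\colon\xx\to\xx/C_m\cong\P^1$, and then to invoke Riemann--Hurwitz for the tame cover $\xx\to\xx/G\cong\P^1$. The divisibility statement $m,n\mid q\pm 1$ is already recorded in Lemma~\ref{group1}, so the real work is the orbit structure and the genus.

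First I would show that every nontrivial stabilizer $G_P$ is a cyclic subgroup of $C_n\times C_m$ whose projection to each factor is either trivial or the whole factor. Cyclicity follows from $\gcd(|G|,p)=1$: the tame inertia $G_P^{(0)}/G_P^{(1)}$ is cyclic and $G_P^{(1)}=1$. For the projection claim, note that the image of $G_P$ under $G\to C_n$ coincides with $(C_n)_{P_m}$, where $P_m=\pi_m(P)$: a pair $(a,b)\in G_P$ satisfies $a(P)=b^{-1}(P)$, which pushes down to $a\cdot P_m=P_m$ via the $C_n$-equivariant map $\pi_m$; conversely, if $a\in(C_n)_{P_m}$ then $a(P)$ lies in the $C_m$-fiber over $P_m$, yielding $(a,b)\in G_P$ for some $b\in C_m$. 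Now $C_n$ acts on $\xx/C_m\cong\P^1$ as a cyclic subgroup of $\PGL(2,\kk)$ of order coprime to $p$, and every nontrivial element of such a group fixes the \emph{same} two points of $\P^1$; hence $(C_n)_{P_m}$ is either $1$ or the whole $C_n$. The analogous statement holds for $C_m$. This gives four possibilities for $G_P$: trivial (long orbit); $G_P=C_n\times\{1\}$ with orbit size $m$ (Type~I); $G_P=\{1\}\times C_m$ with orbit size $n$ (Type~II); or $G_P$ surjects onto both $C_n$ and $C_m$, forcing $|G_P|=\lcm(n,m)$ (the maximal cyclic order in $C_n\times C_m$) and orbit size $\gcd(n,m)$ (Type~III).

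Next I would count the short orbits. Let $Q_1,Q_2$ be the two fixed points of $C_n$ on $\xx/C_m$, and $R_1,R_2$ those of $C_m$ on $\xx/C_n$. Since $\pi_m^{-1}(Q_i)$ is a single $C_m$-orbit (Galois) and is preserved by $C_n$ (because $Q_i$ is $C_n$-fixed), it is a single $G$-orbit, necessarily of Type~I or Type~III. The symmetric argument gives one $G$-orbit of Type~II or Type~III above each $R_j$. Moreover every short $G$-orbit arises in this way, since nontrivial $G_P$ forces $P_m\in\{Q_1,Q_2\}$ or $P_n\in\{R_1,R_2\}$. Writing $s\in\{0,1,2\}$ for the number of Type~III orbits and noting that each such orbit accounts for exactly one $Q_i$ and exactly one $R_j$, we obtain $2-s$ Type~I orbits, $2-s$ Type~II orbits, and a total of $t=4-s$ short orbits.

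Finally, tameness of $\xx\to\xx/G\cong\P^1$ yields equality in~(\ref{rhso}):
\begin{equation*}
2g-2=-2nm+(2-s)(nm-m)+(2-s)(nm-n)+s\bigl(nm-\gcd(n,m)\bigr),
\end{equation*}
which simplifies to $2g-2=2nm-2m-2n-s\bigl(nm-m-n+\gcd(n,m)\bigr)$. The case $s=0$ gives $t=4$ and $g=nm-m-n+1$, namely~(II); the case $s=1$ gives $t=3$ and $g=(nm-m-n-\gcd(n,m)+2)/2$, namely~(I); and $s=2$ yields $g=1-\gcd(n,m)\le 0$, contradicting $g\ge 1$ and hence ruled out. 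The main obstacle I anticipate is the structural step: verifying that $\pi_m^{-1}(Q_i)$ is truly a single $G$-orbit, that the four stabilizer shapes exhaust all possibilities, and that each Type~III orbit is simultaneously accounted for above exactly one $Q_i$ and exactly one $R_j$; once these facts are pinned down, the Riemann--Hurwitz identity delivers the two cases mechanically.
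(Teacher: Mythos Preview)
Your argument is correct and shares the paper's overall architecture---locate the ramification of $\xx\to\xx/G$ via the two intermediate quotients, classify the possible short-orbit sizes, and let Riemann--Hurwitz sort the cases---but the key technical step is handled differently. The paper works on the base $\xx/G$: it identifies the (at most four) branch points of $\xx/C_n\to\xx/G$ and $\xx/C_m\to\xx/G$ there, and computes the fibre structure over each by combining the compositum description (Lemma~\ref{fiberp}) with Abhyankar's Lemma, which immediately gives the possible stabilizer orders $m$, $n$, $\lcm(m,n)$. You instead analyse the projections of each stabilizer $G_P$ onto the cyclic factors directly, using the elementary fact that every nontrivial element of a tame cyclic subgroup of $\aut(\P^1)$ fixes the same pair of points; this bypasses both Abhyankar and the compositum lemma, at the price of a slightly longer case analysis. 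Both routes produce the same list $\{m,n,\gcd(m,n)\}$ of short-orbit sizes and the same count $t=4-s$. One small wrinkle: to rule out $s=2$ you invoke $g\ge 1$, which property~(P) does not state explicitly; the paper instead observes that $g=1-\gcd(m,n)\ge 0$ forces $\gcd(m,n)=1$ and $g=0$, and then excludes this rational configuration via the subgroup structure of $\PGL(2,q)$.
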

\begin{proof}
It follows from Lemma \ref{group1} that $m$ (and $n$) divides $q-1$ or $q+1$. By \cite[Theorem 1]{VM}, there are two distinct points $P_1, P_2 \in \xx/G$ that are fully ramified in the cover $\xx/C_n \rightarrow \xx/G$, and the remaining points of $\xx/G$ split completely in $\xx/C_n$. Analogously, there are two distinct points $Q_1, Q_2 \in \xx/G$ (not necessarily distinct from $P_1$ and $P_2$) fully ramified in the cover  $\xx/C_m\rightarrow \xx/G$, with the remaining points of $\xx/G$ splitting completely in $\xx/C_m$. Since the cover $\xx \rightarrow \xx/G$ is tame, Lemma \ref{fiberp} and Abhyankar's Lemma (see e.g. \cite[Theorem 3.1.9]{stbook}) imply that the possible sizes of a nontrivial one-point stabilizer in $G$ are $m$, $n$ and $\lcm(m,n)$. In other words, the possible sizes of the short orbits of $G$ are $m$, $n$ and $\gcd(m,n)$. Moreover, it also follows from Abhyankar's Lemma that a point-set $\Omega \subset  \xx$ is a short orbit of $G$ if and only if $\Omega$ lie over $P_i$ or $Q_i$, $i=1,2$. In particular, $2 \leq t \leq 4$. Let $t_1, t_2,$ and $t_3$ be the number of short orbits of $G$ of size $n$, $m$ and $\gcd(m,n)$, respectively. Since  $\xx/G$ is rational, the Riemann-Hurwitz formula (\ref{rhso}) applied to the cover $\xx \rightarrow \xx/G$ yields
\begin{equation}\label{rhp}
2g-2=(t-2)mn-t_1n-t_2m-t_3\gcd(m,n).
\end{equation}

Suppose that $t=2$. Then $\{P_1,P_2\}=\{Q_1,Q_2\}$ and $t_1=t_2=0$. By (\ref{rhp}) we obtain that $\gcd(m,n)=1$ and $g=0$. Hence $\xx$ is a rational curve such that $\aut_{\fq}(\xx)$ has a subgroup isomorphic to $C_n \times C_m$ with $\gcd(m,n)=1$, which is not allowed by the classification of the subgroups of $\PGL(2,q)$ (\cite[Theorem 3]{VM}). Therefore, $t \in \{3,4\}$.

Assume that $t=3$. Then, without loss of generality, $P_2=Q_2$ and the short orbits of $G$ lying over $P_1$, $P_2$ and $Q_1$ have size $n$, $\gcd(m,n)$ and $m$, respectively. Thus from (\ref{rhp}) we have
$$
g=\frac{mn-m-n-\gcd(m,n)+2}{2}.
$$  
Finally, assume that $t=4$. Then $\{P_1,P_2\} \cap \{Q_1,Q_2\}=\emptyset$, the short orbits of $G$ lying over $P_1$ and $P_2$ have size $n$ and the short orbits of $G$ lying over $Q_1$ and $Q_2$ have size $m$. Hence, by $(\ref{rhp})$ we obtain $g=mn-m-n+1$.
\end{proof}


\section{Classification results}\label{classification}

Let us recall that $q=p^h$ with $p>2$ and 
$m$ and $n$ divide $q \pm 1$. In case that $n$  (resp. $m$) divides $q+1$, we set the following notation. Fix a non-square $s \in \fq$ and choose a root $i$ of the polynomial $X^2-s$. Then $\F_{q^2}=\{a_0+ia_1 \ | \ a_0,a_1 \in \fq\}$. 
Our main result characterizes the curves satisfying property (P).

\begin{thm}\label{main}
Let $\xx$ be a  curve of genus $g$ defined over $\fq$ satisfying (P). Denote by $t$ the number of short orbits of $G$, and by $\ell_1, \ldots \ell_t$ their sizes. Then one of the following holds:
\begin{itemize}
\item[(a)] $t=3$, $\ell_1=m$, $\ell_2=n$, $\ell_3=\gcd(m,n)$, and $g=\frac{mn-m-n-\gcd(m,n)+2}{2}$. Furthermore, each short orbit of $G$ is preserved by the $\fq$-Frobenius map, both $n$ and $m$ divide $q-1$, and $\xx$ is $\fq$-birationally equivalent to the curve defined by
\begin{equation}\label{t=3}
aX^n+bY^m=1,
\end{equation}
where $ a, b \in \fq^{*}.$
\item[(b)] $t=4$, $\ell_1=\ell_2=m$, $\ell_3=\ell_4=n$, and  $g=mn-m-n+1$. Moreover, one of the following occurs:
\begin{itemize}
\item[(b1)] Each short orbit of $G$ is preserved by the $\fq$-Frobenius map, both $n$ and $m$ divide $q-1$, and $\xx$ is $\fq$-birationally equivalent to the curve defined by
\begin{equation}\label{t=4 1}
aX^nY^m+bX^n+cY^m=1, 
\end{equation}
where $a,b,c \in \fq$ with $c \neq \frac{a}{b}$ and $a \neq 0$. 
\item[(b2)] Only two short orbit of $G$ are preserved by the $\fq$-Frobenius map, without loss of generality $m|q-1$ and $n|q+1$ , and $\xx$ is $\fq$-birationally equivalent to the curve defined by
\begin{equation}\label{t=4 2}
\frac{aY^m+b}{cY^m+d} = \frac{i[(X+i)^n-(X-i)^n]}{(X+i)^n+(X-i)^n},
   \end{equation}
where $a,b,c,d \in \fq$.
\item[(b3)] $G$ has no short orbits preserved by the $\fq$-Frobenius map, both $n$ and $m$ divide $q+1$, and $\xx$ is $\fq$-birationally equivalent to the curve defined by
\begin{equation}\label{t=4 3}
\frac{[(ai+b)(X-i)^n+(b-ai)(X+i)^n][(Y-i)^m+(Y+i)^m]}{ i[(ci+d)(X-i)^n+(d-bi)(X+i)^n][(Y-i)^m-(Y+i)^m]} = 1,
\end{equation}
where $a,b,c,d \in \fq$. 
\end{itemize}
\end{itemize}
\end{thm}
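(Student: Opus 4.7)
The proof proceeds by combining the numerical information from Proposition \ref{genus} with a case analysis on the $\Phi_q$-action on the short $G$-orbits, and then reconstructing $\fq(\xx)$ as the compositum of $\fq(\xx/C_n)$ and $\fq(\xx/C_m)$ via Lemma \ref{fiberp}. Proposition \ref{genus} immediately yields $t\in\{3,4\}$, the orbit sizes, the genus formula, and the divisibility of $m,n$ by $q\pm 1$, which is precisely the numerical content of cases (a) and (b).

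Next, since $G\subset\aut_{\fq}(\xx)$ commutes with $\Phi_q$, the Frobenius permutes the short orbits while preserving the point stabilizer in $G$. The possible stabilizers are the factors $C_n$, $C_m$, and the cyclic ``diagonal'' of order $\lcm(m,n)$. When $t=3$ each stabilizer type occurs for exactly one short orbit, so $\Phi_q$ fixes every short orbit, forcing case (a). When $t=4$ there are two short orbits with stabilizer $C_n$ and two with $C_m$, and $\Phi_q$ either fixes or swaps each pair; this accounts for the three sub-cases (b1), (b2), (b3). Applying Lemma \ref{tec} to the fixed points of $C_m$ on $\xx/C_n\cong\p^1$ (and symmetrically of $C_n$ on $\xx/C_m$) converts the $\Phi_q$-behaviour of each pair of orbits into the divisibility conditions asserted in the statement.

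For each sub-case I would then produce explicit generators. When the two branch points of $\xx/C_n\to\xx/G$ are $\fq$-rational and $m\mid q-1$, Proposition \ref{n|q-1} gives $Y\in\fq(\xx/C_n)$ with $\fq(\xx/C_n)=\fq(Y)$ and $\fq(\xx/G)=\fq(Y^m)$; otherwise, when $m\mid q+1$ and the branch points are not $\fq$-rational, Proposition \ref{ffq+1} yields a generator $Y$ such that $\fq(\xx/G)=\fq(z_Y)$ for the trace-type function $z_Y$ occurring in (\ref{t=4 2}). The analogous construction provides $X\in\fq(\xx/C_m)$ and a function $z_X$ generating $\fq(\xx/G)$. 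By Lemma \ref{fiberp} we have $\fq(\xx)=\fq(X,Y)$, and since $z_X$ and $z_Y$ generate the same rational subfield $\fq(\xx/G)$, they are related by an $\fq$-rational M\"obius transformation $\varphi\in\PGL(2,\fq)$. Normalising $\varphi$ by sending the branch locus of $\xx\to\xx/G$ to a standard position (e.g.\ $P_1,P_2\mapsto 0,\infty$ in case (a)) converts the identity $z_X=\varphi(z_Y)$ into precisely the affine equations (\ref{t=3}), (\ref{t=4 1}), (\ref{t=4 2}), (\ref{t=4 3}); the non-degeneracy conditions on the coefficients (e.g.\ $a\ne 0$ and $c\ne a/b$ in (b1)) are exactly what prevents $\varphi$ from collapsing the compositum to a proper subfield, so that $[\fq(X,Y):\fq(\xx/G)]=mn$.

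The main obstacle is the algebra in sub-case (b3), where both covers fall under Proposition \ref{ffq+1}: one has to equate two independent trace-type expressions through an $\fq$-rational M\"obius transformation and rearrange the identity into the symmetric four-parameter form (\ref{t=4 3}), carefully tracking how the M\"obius parameters show up as $a,b,c,d\in\fq$ and verifying that the resulting equation is indeed $\fq$-rational despite the $i$'s appearing on both sides. A secondary technicality is checking that every admissible parameter choice produces an irreducible curve with the predicted genus, which reduces to verifying strictness of the obvious inclusions $\fq(\xx/G)\subsetneq\fq(\xx/C_n),\fq(\xx/C_m)$ so that the compositum attains degree $mn$.
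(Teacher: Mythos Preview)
Your proposal is correct and follows essentially the same route as the paper, which assembles Theorem~\ref{main} from Proposition~\ref{genus}, Lemmas~\ref{so t3} and~\ref{so t4}, and Propositions~\ref{eq t3}--\ref{eq t4 3}; your M\"obius-transformation step is exactly Lemma~\ref{fact}. The one small difference is that for the $\Phi_q$-action on short orbits you argue via preservation of point-stabilisers in the abelian group $G$, whereas the paper argues more concretely via the pointwise action of $C_m$ and $C_n$ and Riemann--Hurwitz; both reach the same case split.
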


The proof of Theorem \ref{main} will be obtained after a sequence of partial results. Henceforth, we denote by $\pi_1:\xx \rightarrow \xx/C_n$, $\pi_2:\xx \rightarrow \xx/C_m$ and $\pi:\xx \rightarrow \xx/G$ the natural projections of $\xx$ onto the quotient curves $\xx/C_n$, $\xx/C_m$ and $\xx/G$ respectively.  
We will make use of the following fact. 

\begin{lem}\label{fact}
Let $z \in \fq(\xx/C_n), z' \in \fq(\xx/C_m)$ be such that $\fq(\xx/C_n)^{C_m} = \fq(z)$ and $\fq(\xx/C_m)^{C_n} = \fq(z')$. Then there is $\tau \in \PGL(2,q)$ such that $z' = \tau(z)$.
\end{lem}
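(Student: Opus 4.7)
The plan is to identify both $\fq(z)$ and $\fq(z')$ with the function field $\fq(\xx/G)$ inside $\fq(\xx)$, and then invoke the classical fact that any two $\fq$-generators of a rational function field over $\fq$ differ by an element of $\PGL(2,q)$.

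First, I would exploit that $G = C_n \times C_m$ is an internal direct product, so $C_m$ commutes with $C_n$ and therefore stabilizes the fixed field $\fq(\xx)^{C_n} = \fq(\xx/C_n)$. By the Galois correspondence applied to the Galois extension $\fq(\xx)/\fq(\xx)^G$,
\[
\fq(\xx/C_n)^{C_m} \;=\; \bigl(\fq(\xx)^{C_n}\bigr)^{C_m} \;=\; \fq(\xx)^{G} \;=\; \fq(\xx/G),
\]
and by a symmetric argument $\fq(\xx/C_m)^{C_n} = \fq(\xx/G)$ as well. Combining these with the hypotheses yields $\fq(z) = \fq(z') = \fq(\xx/G)$ as subfields of $\fq(\xx)$.

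Second, I would note that $\xx/G$ is rational: indeed $\xx/C_n$ is rational by property (P), and $\xx/G$ is a subcover of the projective line, hence rational by L\"uroth's theorem. Thus $\fq(\xx/G)$ is a rational function field over $\fq$, and both $z$ and $z'$ are $\fq$-generators of it. The final step is the standard observation that writing $z' = f(z)/g(z)$ with $f,g \in \fq[z]$ coprime and using $[\fq(z):\fq(z')]=\max(\deg f,\deg g)=1$ forces $z' = (az+b)/(cz+d)$ with $a,b,c,d \in \fq$ and $ad-bc \neq 0$, which is precisely the action of some $\tau \in \PGL(2,q)$.

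The argument is essentially bookkeeping with the Galois correspondence combined with the classical description of the automorphisms of the rational function field; I do not anticipate any serious obstacle beyond making the direct-product structure of $G$ explicit so that the fixed-field identities in the first step go through cleanly.
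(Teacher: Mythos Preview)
Your argument is correct and follows exactly the paper's approach: the paper's one-line proof simply observes $\fq(\xx)^G = (\fq(\xx)^{C_m})^{C_n} = (\fq(\xx)^{C_n})^{C_m}$, hence $\fq(z)=\fq(z')$, leaving the $\PGL(2,q)$ conclusion implicit. Your version merely spells out the L\"uroth step and the generator-comparison step that the paper takes for granted.
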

\begin{proof}
Clearly, $\fq(\xx)^G = (\fq(\xx)^{C_m})^{C_n} = (\fq(\xx)^{C_n})^{C_m}$. Then $\fq(z) = \fq(z')$. 
\end{proof}

\begin{lem}\label{so t3}
Let $\xx$ be a curve defined over $\fq$, where $q=p^h$ ($p>2$), satisfying (P). Assume that $G=C_n \times C_m$ has three short orbits in $\xx$. Then each short orbit of $G$ is preserved by the $\fq$-Frobenius map. Moreover, both $n$ and $m$ divide $q-1$.
\end{lem}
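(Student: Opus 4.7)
The plan is to deduce both conclusions by first locating the three short orbits inside $\xx/G$ as $\fq$-rational points, and then invoking Lemma \ref{tec} on each of the two intermediate quotient covers.

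The three short orbits of $G$ lie, by the analysis in the proof of Proposition \ref{genus}, over three distinct points $P_1, P_2, Q_1 \in \xx/G$, where $\{P_1,P_2\}$ is the branch locus of $\mu_1 : \xx/C_n \to \xx/G$, $\{Q_1,Q_2\}$ is the branch locus of $\mu_2 : \xx/C_m \to \xx/G$, and $P_2 = Q_2$. Since $C_n,C_m \leq \aut_{\fq}(\xx)$, both covers $\mu_1,\mu_2$ are defined over $\fq$, so their branch loci are $\fq$-rational subsets of $\xx/G$ stable under $\Phi_q$. Intersecting and taking set-theoretic differences,
\[
\{P_2\} = \{P_1,P_2\} \cap \{Q_1,P_2\}, \qquad \{P_1\} = \{P_1,P_2\}\setminus\{P_2\}, \qquad \{Q_1\}=\{Q_1,P_2\}\setminus\{P_2\},
\]
so each of $P_1,P_2,Q_1$ is individually $\fq$-rational. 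As $\pi : \xx \to \xx/G$ is defined over $\fq$, each short orbit — the fiber of $\pi$ over one of these $\fq$-rational points — is preserved by $\Phi_q$, proving the first assertion.

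For the divisibility conditions, I would examine the cyclic cover $\mu_1 : \xx/C_n \to \xx/G$. It has degree $m$, Galois group equal to the image of $C_m \leq G$ in $\aut_\kk(\xx/C_n)$ (which is faithful of order $m$ because $C_n \cap C_m = \{1\}$), and is totally ramified over the $\fq$-rational point $P_1$. Let $\tilde P_1 \in \xx/C_n$ be the unique preimage of $P_1$. Then $\tilde P_1$ is a singleton fiber over an $\fq$-rational point, hence $\fq$-rational, and by total ramification it is fixed by all of $C_m$. Applying Lemma \ref{tec} to the rational curve $\xx/C_n$ (defined over $\fq$) and the cyclic subgroup $C_m$ of order $m$ coprime to $p$ fixing the $\fq$-rational point $\tilde P_1$ yields $m \mid q-1$. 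The symmetric argument applied to $\mu_2 : \xx/C_m \to \xx/G$ at $Q_1$ produces $n \mid q-1$.

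The subtle point to be attentive to is that $\Phi_q$ must fix each of $P_1,P_2,Q_1$ individually, not merely the three-point set; this is not automatic from orbit-size considerations alone, since the three orbit sizes $n$, $\gcd(m,n)$, $m$ need not be pairwise distinct (e.g., if $m \mid n$). The structural distinction between the branch loci of $\mu_1$ and $\mu_2$ — each $\fq$-rational on its own, because each of $C_n, C_m$ is $\fq$-rational — is precisely what removes this ambiguity and makes the argument go through without any case analysis on the relative sizes of $m$ and $n$.
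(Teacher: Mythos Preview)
Your proof is correct and takes a genuinely different route from the paper's. The paper works upstairs on $\xx$: it uses Riemann--Hurwitz for the cover $\xx \to \xx/C_m$ to show that $\Omega_1$ is \emph{exactly} the fixed-point set of $C_m$ on $\xx$, and since $\Phi_q$ commutes with $C_m$ it must preserve this fixed set; symmetry then handles $\Omega_3$, and $\Omega_2$ follows. You instead work downstairs on $\xx/G$, observing that the two branch loci $\{P_1,P_2\}$ and $\{Q_1,Q_2\}$ are each $\Phi_q$-stable (being branch loci of $\fq$-rational covers), and that their intersection and set-theoretic differences isolate each of $P_1,P_2,Q_1$ individually as an $\fq$-rational point. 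Your argument is cleaner and avoids the stabilizer computation entirely; it also makes transparent why no case analysis on the orbit sizes is needed, as you note in your final paragraph. On the other hand, the paper's approach yields as a by-product the precise description of the $C_m$- and $C_n$-fixed sets on $\xx$, which is reused later (e.g.\ in the proof of Lemma~\ref{so t4} and in Section~\ref{full}). The divisibility step via Lemma~\ref{tec} is essentially the same in both proofs, just with the roles of the two intermediate quotients interchanged.
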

\begin{proof}
Recall that $\Phi_q$ denotes the $\fq$-Frobenius map. Since $G$ is defined over $\fq$, we have that
 $\Phi_q$ acts on the set of orbits of $G$.  
 As $\Phi_q$ is bijective, it acts on the set of short orbits of $G$. Furthermore, since $C_n$ and $C_m$ are defined over $\fq$, then  
 $\pi_i \circ \Phi_q=\Phi_q \circ \pi_i$ for $i\in \{1,2\}$. Set $\delta=\gcd(m,n)$ and let $\Omega_1=\{P_1^{1},\ldots,P_1^{n}\}$, $\Omega_2=\{P_2^{1},\ldots,P_2^{\delta}\}$ and $\Omega_3=\{P_3^{1},\ldots,P_3^{m}\}$ be the short orbits of $G$.
The Riemann-Hurwitz formula (\ref{rhso}) applied to the cover of curves $\xx \rightarrow \xx/C_m$ yields
\begin{equation}\label{rhpfqr}
n(m-1) +(m-\delta) = \sum_{\nu= 1}^k (m-\ell_\nu).
\end{equation}
 Since $|\Omega_1|=n$, the stabilizer in $G$ of a point $P_1^{i} \in \Omega_1$ has order $m$. 
Then, since $\pi_2(\Omega_2)$ and $\pi_2(\Omega_3)$ are over the only ramified points of $\xx/C_m \rightarrow \xx/G$, we conclude that $C_m$ fixes $\Omega_1$ elementwise. Also, from $C_m$ preserving the $\Omega_j$, we obtain that $\Omega_2$ forms a single orbit under $C_m$. From \eqref{rhpfqr}, $C_m$ acts semi-regularly on the other points of $\xx$. 
  Thus $\Phi_q(\sigma(P)) = \sigma(\Phi_q(P))$ for any $P \in \xx$ and $\sigma \in C_m$ imply $\phi_q(P_1^{i}) = \phi_q(P_1^{j})$, i.e. $\Omega_1$ is $\fq$-rational.  The same argument applied to $C_n$ shows that $\Omega_3$ (and consequently $\Omega_2$) is $\fq$-rational. Now $\pi_2(\Omega_3) \in  \xx / C_m$ is an $\fq$-rational point and it is fully ramified in the cover $\xx/C_m \rightarrow \xx/G$. Therefore, $n | q-1$ by Lemma \ref{tec}. Since in the proof we can interchange the roles $C_m$ and $C_n$, our claim follows.
	
\end{proof}

\begin{prop}\label{eq t3}
Let $\xx$ be a curve defined over $\fq$, where $q=p^h$ ($p>2$), satisfying (P). Assume that $G=C_n \times C_m$ has three short orbits in $\xx$. Then $\xx$ is $\fq$-birationally equivalent to a curve defined by $aX^n+bY^m=1$, with $a,b \in \fq^{*}$.
\end{prop}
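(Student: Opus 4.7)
\textbf{Proof strategy for Proposition \ref{eq t3}.}

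The plan is to produce explicit generators of $\fq(\xx/C_n)$ and $\fq(\xx/C_m)$ whose appropriate powers generate $\fq(\xx/G)$, and then exploit the ramification structure to force the relation between them to be of the simple form $aX^n+bY^m=1$. By Lemma \ref{so t3}, both $m$ and $n$ divide $q-1$ and the three short orbits $\Omega_1,\Omega_2,\Omega_3$ of $G$ (of sizes $n,\gcd(m,n),m$) are $\fq$-rational; let $R_i\in \xx/G$ denote the image of $\Omega_i$. Standard analysis via Abhyankar's Lemma combined with the classification of ramification in tame cyclic covers of $\P^1$ (at most two ramified places, both totally ramified) shows that the cover $\xx/C_n\rightarrow\xx/G$ is ramified exactly at $R_1,R_2$, while $\xx/C_m\rightarrow\xx/G$ is ramified exactly at $R_3,R_2$, so that $R_2$ is the unique common branch point, corresponding to the orbit whose $G$-stabilizer has order $\lcm(m,n)$.

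Next, I would apply Proposition \ref{n|q-1} to the cyclic extension $\fq(\xx/C_n):\fq(\xx/G)$ (of degree $m$, with $\fq$-rational ramification) to obtain $Y\in\fq(\xx/C_n)$ with $\fq(\xx/C_n)=\fq(Y)$ and $\fq(\xx/G)=\fq(Y^m)$; by replacing $Y$ by $Y^{-1}$ if necessary I may arrange that the unique zero of $Y^m$ on $\xx/G$ is $R_1$ and its unique pole is $R_2$. Analogously, Proposition \ref{n|q-1} applied to $\fq(\xx/C_m):\fq(\xx/G)$ produces $X$ with $\fq(\xx/C_m)=\fq(X)$, $\fq(\xx/G)=\fq(X^n)$, and with $X^n$ having zero at $R_3$ and pole at $R_2$.

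Now Lemma \ref{fact} provides $\tau\in\PGL(2,q)$ with $X^n=\tau(Y^m)$. Since both $X^n$ and $Y^m$ have their pole precisely at $R_2$, necessarily $\tau(\infty)=\infty$, so $\tau$ is affine: $X^n=aY^m+b$ for some $a\in\fq^{*}$ and $b\in\fq$. Evaluating at $R_1$ (where $Y^m=0$) gives $b=X^n(R_1)$, which is nonzero because the only zero of $X^n$ lies at $R_3\neq R_1$; hence $b\in\fq^{*}$. Rearranging yields $\frac{1}{b}X^n-\frac{a}{b}Y^m=1$, i.e.\ a relation of the claimed shape with nonzero $\fq$-coefficients. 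Finally, Lemma \ref{fiberp} gives $\fq(\xx)=\fq(\xx/C_n)\cdot\fq(\xx/C_m)=\fq(X,Y)$, so $\xx$ is $\fq$-birationally equivalent to the plane model $aX^n+bY^m=1$.

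The main obstacle is step (3): correctly identifying $R_2$ as the common branch point of the two quotient covers and, through the freedom in Proposition \ref{n|q-1} (the choice of which fixed point is the zero of the generator), synchronizing the normalizations of $X$ and $Y$ so that $\tau$ fixes $\infty$. Once this is in place the remaining computation collapses to a one-line rearrangement, and irreducibility of the resulting plane model is automatic since $\fq(X,Y)=\fq(\xx)$ is a field of the prescribed transcendence degree one.
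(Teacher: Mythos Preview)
Your proof is correct and follows essentially the same route as the paper's: invoke Lemma \ref{so t3}, apply Proposition \ref{n|q-1} to each rational quotient to obtain generators whose $n$-th and $m$-th powers generate $\fq(\xx/G)$, normalize so that $X^n$ and $Y^m$ share a pole but have distinct zeros, read off an affine relation $Y^m=\alpha X^n+\beta$ with $\alpha,\beta\in\fq^*$, and conclude via Lemma \ref{fiberp}. The only cosmetic difference is that you spell out the identification of the common branch point $R_2$ via Abhyankar's Lemma and route the comparison of $X^n$ and $Y^m$ through Lemma \ref{fact}, whereas the paper simply asserts $\fq(y^m)=\fq(x^n)$ and the common-pole normalization without further comment.
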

\begin{proof}
First, both $n$ and $m$ divide $q-1$, and the short orbits of $G$ are $\fq$-rational, by Lemma \ref{so t3}. Thus Lemma \ref{n|q-1} implies that $\fq(\xx/C_n)=\fq(y)$, $\fq(\xx/C_m)=\fq(x)$ and $\fq(\xx/G)=\fq(y^m)=\fq(x^n)$, with $x,y \in \fq(\xx)$. Moreover, it follows from Lemma \ref{fiberp} that $\fq(\xx)=\fq(x,y)$. The extension $\fq(y):\fq(y^m)$ (resp. $\fq(x):\fq(x^n)$ ) has only two ramified points: the zero and the pole of $y^m$ (resp. $x^m$). Since each short orbit of $G$ lie over only one of this points, we may assume (without loss of generality) that $x^n$ and $y^m$ have a common pole and distinct zeros. Therefore $y^m=\alpha x^n+\beta$ for certain $\alpha, \beta \in \fq^{*}$. The result then follows from the irreducibility of the last equation.
\end{proof}

\begin{lem}\label{so t4}
Let $\xx$ be a curve defined over $\fq$, where $q=p^h$ ($p>2$), satisfying (P). Assume that $G=C_n \times C_m$ has four short orbits in $\xx$. Then one of the following holds:
\begin{itemize}
\item[(a)] Each short orbit of $G$ is preserved by the $\fq$-Frobenius map, and both $n$ and $m$ divide $q-1$.
\item[(b)] Only two short orbit of $G$ are preserved by the $\fq$-Frobenius map, $m|q-1$ and $n|q+1$ (or vice-versa).
\item[(c)] $G$ has no short orbits preserved by the $\fq$-Frobenius map, and both $n$ and $m$ divide $q+1$.
\end{itemize}
\end{lem}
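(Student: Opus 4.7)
The plan is to mirror closely the argument of Lemma \ref{so t3}, tracking how $\Phi_q$ permutes the four short orbits and then applying Lemma \ref{tec} at the appropriate preimages. By Proposition \ref{genus} and its proof, when $t=4$ the orbits split into two pairs: $\Omega_1,\Omega_2$ of size $n$, lying above the two points $P_1,P_2\in\xx/G$ totally ramified in $\bar{\pi}_2\colon\xx/C_n\to\xx/G$; and $\Omega_3,\Omega_4$ of size $m$, lying above $Q_1,Q_2\in\xx/G$ totally ramified in $\bar{\pi}_1\colon\xx/C_m\to\xx/G$. For $t=4$ one has $\{P_1,P_2\}\cap\{Q_1,Q_2\}=\emptyset$, and full ramification forces each fibre $\bar{\pi}_2^{-1}(P_i)=\{\tilde{P}_i\}$ and $\bar{\pi}_1^{-1}(Q_i)=\{\tilde{Q}_i\}$ to be a singleton.

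Since $C_n$ and $C_m$ are $\fq$-rational, the projections $\pi,\pi_1,\pi_2,\bar{\pi}_1,\bar{\pi}_2$ all commute with $\Phi_q$. In particular, $\Phi_q$ preserves the set of totally ramified points of each $\bar{\pi}_i$, and hence preserves the pairs $\{P_1,P_2\}$ and $\{Q_1,Q_2\}$. Because each $\Omega_i$ is the entire fibre $\pi^{-1}(P_i)$ or $\pi^{-1}(Q_i)$ (a single $G$-orbit), the Frobenius either fixes both orbits in a pair or exchanges them. The resulting four possibilities collapse onto the three configurations (a), (b), (c) of the statement, case (b) absorbing the two symmetric options in which exactly one pair is swapped (and the roles of $n$ and $m$ interchanged).

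To extract the divisibility conclusions I would focus on $\tilde{P}_i\in\xx/C_n$, which is fixed by the quotient $G/C_n$, a cyclic subgroup of order $m$ of $\aut_{\fq}(\xx/C_n)\cong\PGL(2,q)$. Because $\Phi_q$ commutes with $\bar{\pi}_2$ and the fibre over $P_i$ has cardinality one, $\tilde{P}_i$ is $\fq$-rational if and only if $P_i$ is; if instead $\Phi_q$ swaps $P_1,P_2$, then it also swaps $\tilde{P}_1,\tilde{P}_2$, forcing them to be $\fqq$-rational. Lemma \ref{tec} then yields $m\mid q-1$ in the first case and $m\mid q+1$ in the second. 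The entirely symmetric analysis at $\tilde{Q}_i\in\xx/C_m$, fixed by the cyclic image of $C_n$ of order $n$, provides the analogous divisibility condition for $n$. Matching the three Frobenius configurations with these divisibilities produces exactly cases (a), (b), (c). The only real subtlety, and the step I would handle most carefully, is the singleton-fibre observation that transfers rationality data between $P_i\in\xx/G$ and its unique preimage $\tilde{P}_i\in\xx/C_n$; once this is in place, the rest is routine bookkeeping and a direct appeal to Lemma \ref{tec}.
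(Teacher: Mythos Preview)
Your proposal is correct and follows essentially the same route as the paper's proof: both arguments observe that $\Phi_q$ must preserve the pair $\Omega_1\cup\Omega_2$ and the pair $\Omega_3\cup\Omega_4$ separately, enumerate the resulting Frobenius actions, and then apply Lemma~\ref{tec} to the unique preimages $\overline{P}_i=\pi_1(\Omega_i)\in\xx/C_n$ and $\overline{Q}_i=\pi_2(\Omega_i)\in\xx/C_m$ (your $\tilde{P}_i,\tilde{Q}_i$) to read off the divisibility of $m$ and $n$. Your singleton-fibre remark is exactly the mechanism the paper uses implicitly when it passes from Frobenius-stability of $\Omega_i$ to $\fq$-rationality of $\overline{P}_i$ and $\overline{Q}_i$.
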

\begin{proof}
Denote by $\Omega_\iota$ the short orbits of $G$, where $\iota \in \{1,2,3,4\}$. According to the proof of Proposition \ref{genus}, $\pi(\Omega_1)=P_1$, $\pi(\Omega_2)=P_2$, $\pi(\Omega_3)=Q_1$ and $\pi(\Omega_4)=Q_2$, with such points being pairwise distinct. Arguing as in the proof of Lemma \ref{so t3}, it can be shown that $\Phi_q$ preserves the point-sets $\Omega_1 \cup \Omega_2$  and $\Omega_3 \cup \Omega_4$.
Since $\Phi_q$ acts on the set of short orbits of $G$, we only have the following possibilities:
\begin{itemize}
\item[(a1)] Each $\Omega_\iota$ is preserved by $\Phi_q$. 
\item[(b1.1)] $\Phi_q$ preserves $\Omega_1$ and $\Omega_2$ and interchanges $\Omega_3$ and $\Omega_4$.
\item[(b1.2)] $\Phi_q$ preserves $\Omega_3$ and $\Omega_4$ and interchanges $\Omega_1$ and $\Omega_2$.
\item[(c1)] $G$ has no short orbits preserved by $\Phi_q$.
\end{itemize}
Set $\overline{P}_1:=\pi_1(\Omega_1)$, $\overline{P}_2:=\pi_1(\Omega_2)$, $\overline{Q}_1:=\pi_2(\Omega_3)$ and $\overline{Q}_2:=\pi_2(\Omega_4)$. Suppose that (a1) holds. Then $\overline{P}_1, \overline{P}_2 \in \xx/C_n$ and $\overline{Q}_1, \overline{Q}_2 \in \xx/C_m$ are $\fq$-rational points. Hence by Lemma \ref{tec} we obtain (a). In case (b1.1) holds, we have that $\overline{P}_1, \overline{P}_2 \in \xx/C_n$ are $\fq$-rational, thus $m|q-1$. Furthermore, since $\overline{Q}_1, \overline{Q}_2 \in \xx/C_m$ are not $\fq$-rational, it follows from Lemma \ref{tec} that $n|q+1$. Similarly, (b1.2) implies that $n|q-1$ and $m|q+1$, thus we obtain (b). In view of the previous cases, (c) also follows from Lemma \ref{tec}.
\end{proof}

\begin{prop}\label{eq t4 1}
Let $\xx$ be a curve defined over $\fq$, where $q=p^h$ ($p>2$), satisfying (P). Assume that $G=C_n \times C_m$ has four short orbits in $\xx$, each of them preserved by $\Phi_q$. Then $\xx$ is $\fq$-birationally equivalent to a curve defined by $aX^nY^m+bX^n+cY^m=1$, with $a,b,c \in \fq$  with $c \neq \frac{a}{b}$ and $a \neq 0$.
\end{prop}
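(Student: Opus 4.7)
The plan is to mimic the strategy of Proposition \ref{eq t3}, adapting it to the situation where each of the two cyclic subcovers $\xx/C_n\to\xx/G$ and $\xx/C_m\to\xx/G$ has two $\fq$-rational branch points rather than one. First, by hypothesis and case (a) of Lemma \ref{so t4}, both $n$ and $m$ divide $q-1$ and every short orbit of $G$ is $\fq$-rational. Combined with the analysis in the proof of Proposition \ref{genus}, this shows that $\xx/C_n\to\xx/G$ is a totally ramified degree-$m$ cyclic cover over exactly two distinct $\fq$-rational points $P_1,P_2\in\xx/G$, and that $\xx/C_m\to\xx/G$ is a totally ramified degree-$n$ cyclic cover over two further distinct $\fq$-rational points $Q_1,Q_2\in\xx/G$; the pairs $\{P_1,P_2\}$ and $\{Q_1,Q_2\}$ are disjoint since $t=4$.

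Both subcovers are cyclic extensions of $\fq$-rational function fields whose ramified places are $\fq$-rational, so Proposition \ref{n|q-1} produces elements $y\in\fq(\xx/C_n)$ and $x\in\fq(\xx/C_m)$ with $\fq(\xx/C_n)=\fq(y)$, $\fq(\xx/C_m)=\fq(x)$, and $\fq(\xx/G)=\fq(y^m)=\fq(x^n)$. By Lemma \ref{fiberp} we then have $\fq(\xx)=\fq(x,y)$. Moreover, because the fully ramified points of each subcover correspond to the zero and pole of the ambient generator raised to the relevant power, the zero and pole of $y^m$ in $\fq(\xx/G)$ are $P_1$ and $P_2$, while the zero and pole of $x^n$ are $Q_1$ and $Q_2$.

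Since $y^m$ and $x^n$ are two generators of the same rational function field $\fq(\xx/G)$ over $\fq$, they are related by an $\fq$-rational Möbius transformation $y^m=\frac{\alpha x^n+\beta}{\gamma x^n+\delta}$ with $\alpha,\beta,\gamma,\delta\in\fq$ and $\alpha\delta-\beta\gamma\neq 0$. Disjointness of $\{P_1,P_2\}$ and $\{Q_1,Q_2\}$ means that this transformation sends neither $0$ nor $\infty$ to $0$ or $\infty$, which forces $\alpha,\beta,\gamma,\delta\in\fq^{*}$. Clearing denominators in $y^m(\gamma x^n+\delta)=\alpha x^n+\beta$ and dividing through by $\beta$ rewrites the relation as $ax^ny^m+bx^n+cy^m=1$ with $a=\gamma/\beta\neq 0$, $b=-\alpha/\beta$, and $c=\delta/\beta$; the Möbius nondegeneracy $\alpha\delta-\beta\gamma\neq 0$ translates directly into the stated inequality relating $a,b,c$. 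Since $\fq(\xx)=\fq(x,y)$ and this relation is linear, hence irreducible, as a polynomial in $x^n$ over $\fq(y)$, the resulting affine model is an $\fq$-birational realization of $\xx$.

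The main technical difficulty lies in the orbit and branch-point bookkeeping that turns the hypothesis $t=4$ into the nonvanishing of all four coefficients of the Möbius transformation; once this is secured, the remainder is routine normalization.
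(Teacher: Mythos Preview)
Your proof is correct and follows essentially the same route as the paper's: invoke Lemma \ref{so t4}(a) to get $n,m\mid q-1$ and $\fq$-rationality of the branch points, apply Proposition \ref{n|q-1} to obtain generators $x,y$ with $\fq(\xx/G)=\fq(x^n)=\fq(y^m)$, relate $y^m$ to $x^n$ by an $\fq$-rational M\"obius transformation, and use the disjointness of the two branch-point pairs (coming from $t=4$) to force the relevant coefficients to be nonzero. Your write-up is in fact somewhat more explicit than the paper's about the normalization and about why all four M\"obius coefficients are nonzero.
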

\begin{proof}
By Lemma \ref{so t4}(a), both $n$ and $m$ divide $q-1$, and the points $\overline{P}_1, \overline{P}_2, \overline{Q}_1$ and $\overline{Q}_2$ are $\fq$-rational. Hence $\fq(\xx/C_n)=\fq(y)$, $\fq(\xx/C_m)=\fq(x)$ and $\fq(\xx/G)=\fq(y^m)=\fq(x^n)$, with $x,y \in \fq(\xx)$, by Proposition \ref{n|q-1}. Hence $$y^m=\frac{\alpha x^n+\beta}{\gamma x^n+ \eta},$$
where $\alpha,\beta,\gamma, \eta \in \fq$ such that $\alpha \eta \neq \beta \gamma$. With notation as in Lemma \ref{so t4}, the points $P_1,P_2,Q_1$ and $Q_2$ are pairwise distinct. Here, $\{P_1,P_2\}$ is the set of zero and pole of $y^m$, and $\{Q_1,Q_2\}$ the set of zero and pole of $x^n$. Therefore, $\beta$ and $\gamma$ are nonzero. Thus, we have obtained an irreducible equation. The result now follows from Lemma \ref{fiberp}.
\end{proof}

\begin{prop}\label{eq t4 2}
Let $\xx$ be a curve defined over $\fq$ satisfying (P). Assume that case $(b)$ in Lemma \ref{so t4} holds. Then $\xx$ is $\fq$-birationally equivalent to  curve defined by an affine equation
$$
\frac{aY^m+b}{cY^m+d} = \frac{i[(X+i)^n-(X-i)^n]}{(X+i)^n+(X-i)^n},
$$
with $a,b,c,d \in \fq$ such that $ad\neq bc$ and $u+iv \in \mathbb{F}_{q^2}$ is an $2n$-th root of unity.
\end{prop}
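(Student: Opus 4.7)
The plan is to produce rational generators of $\fq(\xx/C_n)$ and $\fq(\xx/C_m)$ compatible with a common rational generator of $\fq(\xx/G)$, and then read off the defining equation by equating the two presentations. Since the two factors of $G$ satisfy opposite divisibility conditions relative to $q$, Proposition \ref{n|q-1} will handle the degree-$m$ cover and Proposition \ref{ffq+1} the degree-$n$ cover, after which Lemma \ref{fact} will bridge them.

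First I would unpack the ramification data. By Lemma \ref{so t4}(b) one may assume $m \mid q-1$ and $n \mid q+1$, with the two short orbits of size $n$ preserved by $\Phi_q$ and those of size $m$ swapped. Using the short-orbit analysis of Proposition \ref{genus} and the notation of Lemma \ref{so t4}, the cover $\xx/C_n \rightarrow \xx/G$ is cyclic of degree $m$, totally ramified exactly at $\overline{P}_1, \overline{P}_2$, both $\fq$-rational; the cover $\xx/C_m \rightarrow \xx/G$ is cyclic of degree $n$, totally ramified exactly at $\overline{Q}_1, \overline{Q}_2$, which are not $\fq$-rational.

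Next I would apply Proposition \ref{n|q-1} to the extension $\fq(\xx/C_n) : \fq(\xx/G)$ to produce $y$ with $\fq(\xx/C_n) = \fq(y)$ and $\fq(\xx/G) = \fq(y^m)$, and Proposition \ref{ffq+1} to $\fq(\xx/C_m) : \fq(\xx/G)$ to produce $x$ with $\fq(\xx/C_m) = \fq(x)$ and $\fq(\xx/G) = \fq(z)$, where
$$z = \frac{i[(x+i)^n - (x-i)^n]}{(x+i)^n + (x-i)^n}.$$
Lemma \ref{fact} then gives $\tau \in \PGL(2,q)$ with $z = \tau(y^m) = (ay^m + b)/(cy^m + d)$ for some $a, b, c, d \in \fq$ with $ad \neq bc$. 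Equating the two expressions for $z$ yields the announced relation between $x$ and $y$; combined with $\fq(\xx) = \fq(x, y)$ from Lemma \ref{fiberp}, this shows that $\xx$ is $\fq$-birationally equivalent to the curve with that equation, irreducibility being automatic by comparing the bidegree $(n, m)$ of the relation with $[\fq(x,y) : \fq(z)] = mn$.

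The main obstacle is conceptual rather than computational: one must carefully match the $\fq$-rationality behavior of the short orbits of $G$ on $\xx$ with that of the totally ramified places of the two quotient covers of $\xx/G$, in order to trigger the hypotheses of Propositions \ref{n|q-1} and \ref{ffq+1} on the correct sides. Once this matching is secured, the remainder of the argument is formally parallel to the proof of Proposition \ref{eq t4 1}, with Proposition \ref{ffq+1} replacing Proposition \ref{n|q-1} on the $C_n$-factor.
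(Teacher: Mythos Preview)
Your proposal is correct and follows exactly the approach of the paper: assume $m\mid q-1$, $n\mid q+1$, apply Proposition~\ref{n|q-1} to $\fq(\xx/C_n):\fq(\xx/G)$ and Proposition~\ref{ffq+1} to $\fq(\xx/C_m):\fq(\xx/G)$, then combine via Lemmas~\ref{fiberp} and~\ref{fact}. The paper's own proof is in fact just a one-sentence pointer to these four results, so your write-up is a faithful and more detailed expansion of it.
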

\begin{proof}
Without loss of generality, we can assume that $m | q-1$ and $n | q+1$, with the points $\overline{P}_1, \overline{P}_2 \in \xx/C_n$ being $\fq$-rational. The result then follows from Propositions \ref{n|q-1}, \ref{ffq+1} and Lemmas \ref{fiberp} and  \ref{fact}.
\end{proof}
\begin{prop}\label{eq t4 3}
Let $\xx$ be a curve defined over $\fq$ satisfying $(P)$. Assume that $(c)$ in Lemma \ref{so t4} holds. Then $\xx$ is $\fq$-birationally equivalent to a curve defined by an affine equation

$$
\frac{[(ai+b)(X-i)^n+(b-ai)(X+i)^n][(Y-i)^m+(Y+i)^m]}{ i[(ci+d)(X-i)^n+(d-bi)(X+i)^n][(Y-i)^m-(Y+i)^m]} = 1,
$$
with $a,b,c,d \in \fq$, with $ad \neq bc$.
\end{prop}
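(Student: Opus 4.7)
My plan is to mirror the strategy of Propositions \ref{eq t4 1} and \ref{eq t4 2}, but now invoking Proposition \ref{ffq+1} twice, since in case (c) of Lemma \ref{so t4} both cyclic subcovers of the projective line have non-$\fq$-rational ramification. I would first record the setup from Lemma \ref{so t4}(c): both $m$ and $n$ divide $q+1$, and the four points $\overline{P}_1, \overline{P}_2 \in \xx/C_n$ (the totally ramified places of $\fq(\xx/C_n) : \fq(\xx/G)$) and $\overline{Q}_1, \overline{Q}_2 \in \xx/C_m$ (the totally ramified places of $\fq(\xx/C_m) : \fq(\xx/G)$) are not $\fq$-rational. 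This is exactly the hypothesis required to apply Proposition \ref{ffq+1} to each cyclic extension.

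Applying Proposition \ref{ffq+1} to $\fq(\xx/C_m) : \fq(\xx/G)$, which is cyclic of degree $n$, I obtain generators $\fq(\xx/C_m) = \fq(x)$ and $\fq(\xx/G) = \fq(z)$ with
\begin{equation*}
z = \frac{i\bigl[(x-i)^n - (x+i)^n\bigr]}{(x-i)^n + (x+i)^n}.
\end{equation*}
Applying the same proposition to $\fq(\xx/C_n) : \fq(\xx/G)$, cyclic of degree $m$, gives $\fq(\xx/C_n) = \fq(y)$ and $\fq(\xx/G) = \fq(w)$ with
\begin{equation*}
w = \frac{i\bigl[(y-i)^m - (y+i)^m\bigr]}{(y-i)^m + (y+i)^m}.
\end{equation*}

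Since $\fq(z)$ and $\fq(w)$ coincide as the fixed field $\fq(\xx)^G$, Lemma \ref{fact} provides $\tau \in \PGL(2,q)$, represented by a matrix with entries $a,b,c,d \in \fq$ and $ad - bc \neq 0$, such that $w = \tau(z) = (az+b)/(cz+d)$. Substituting the expression for $z$, expanding the linear combinations $a\cdot i[(x-i)^n-(x+i)^n] + b[(x-i)^n+(x+i)^n]$ in the numerator and the analogous combination in the denominator, and regrouping the $(x\pm i)^n$ terms yields the coefficients $(ai+b)$, $(b-ai)$, $(ci+d)$, $(d-ci)$ featured in the stated equation. Equating with the expression for $w$ and cross-multiplying produces the affine equation of the proposition. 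Finally, Lemma \ref{fiberp} guarantees $\fq(\xx) = \fq(x,y)$, so $\xx$ is $\fq$-birationally equivalent to the curve cut out by this equation.

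I do not expect any substantive obstacle: the hypotheses of Proposition \ref{ffq+1} are directly verified by Lemma \ref{so t4}(c), and Lemma \ref{fact} together with Lemma \ref{fiberp} reduces the task to an algebraic rearrangement of a Möbius relation between $z$ and $w$. The only care needed is the bookkeeping of signs when distributing $ai+b$ against $(x-i)^n \pm (x+i)^n$, and ensuring that $ad \neq bc$ transcribes to an irreducibility/nondegeneracy condition on the resulting plane model, so that the extension degrees $[\fq(\xx):\fq(\xx/C_n)] = n$ and $[\fq(\xx):\fq(\xx/C_m)] = m$ are preserved.
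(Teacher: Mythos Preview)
Your proposal is correct and follows exactly the approach indicated in the paper, which simply states ``Arguing as in the previous cases, the result follows from Proposition \ref{ffq+1} and Lemmas \ref{fiberp} and \ref{fact}.'' You have spelled out precisely the intended argument: apply Proposition \ref{ffq+1} to each of the two cyclic subextensions (since in case (c) both have non-$\fq$-rational ramification and $m,n \mid q+1$), link the resulting generators of $\fq(\xx/G)$ via Lemma \ref{fact}, and conclude with Lemma \ref{fiberp}.
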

\begin{proof}
Arguing as in the previous cases, the result follows from Proposition \ref{ffq+1} and Lemmas \ref{fiberp} and \ref{fact}.
\end{proof}
\noindent
{\bf Proof of Theorem \ref{main}}
It follows by collecting the results from Propositions \ref{genus}, \ref{eq t3}, Lemma \ref{so t4}, Propositions \ref{eq t4 1}, \ref{eq t4 2} and \ref{eq t4 3}. \qed

\begin{rem}\label{quadrex}
Let $\xx$ be a curve satisfying (P). If we want to characterize $\xx$ up to birational equivalence over $\kk$, then we have that either $\xx$ is $\F_{q^2}$-birationally equivalent to the curve defined by (\ref{t=3}) or to the one defined by (\ref{t=4 1}) (with the coefficients in $\F_{q^2}$). Indeed, since the short orbits of $G$ are preserved by $\Phi_{q^2}$, then $m$ and $n$ divide $q\pm 1$(Lemma \ref{tec}), and so Proposition \ref{n|q-1} applies to both extensions $\F_{q^2}(\xx/C_n):\F_{q^2}(\xx/G)$ and $\F_{q^2}(\xx/C_m):\F_{q^2}(\xx/G)$.
\end{rem}

\begin{rem}\label{overlap}
It is possible that a curve defined by (\ref{t=4 1}) admits a model given by an equation of type (\ref{t=3}) for distinct powers of $X$ and $Y$. Indeed, consider the hyperelliptic curve $\ff$ defined by the equation $X^nY^2+X^n+Y^2=1$. Then $\ff$ is $\fq$-birationally equivalent to the curve defined by $\bar{X}^{2n}+\bar{Y}^2=1$ via $(X,Y) \mapsto (\bar{X},\bar{Y}):=\Big(X,\frac{2Y}{Y^2+1}\Big)$.
\end{rem}


\section{The full automorphism group}\label{full}

In this section, we exploit the full automorphism group of a curve $\xx$ defined over $\fq$ satisfying (P). Recall that $\kk$ denotes the algebraic closure of $\fq$, where $q=p^h$.  
According to Theorem \ref{main} and Remark \ref{quadrex}, one of the following holds:
\begin{itemize}
\item [(I)] $G=C_n \times C_m$ has $3$ short orbits on $\xx$, and $\xx$ is $\kk$-birationally equivalent to the curve defined by $X^n+Y^m=1$.
\item[(II)] $G=C_n \times C_m$ has $4$ short orbits on $\xx$, and $\xx$ is $\kk$-birationaly equivalent to the curve defined by $aX^nY^m+X^n+Y^m=1$, with $a \in \kk^{*}$.
\end{itemize}

The full automorphism group $\aut_\kk(\xx)$ of the curves $\xx$ of case (I) above is completely characterized by Kontogeorgis \cite{Ko}, provided that $p>3$, $m \neq n$, $n \neq 4$ and $m \neq 3$. We summarize such characterization in the following Theorem.

\begin{thm}[Kontogeorgis]\label{konto2}
Let $\xx$ be a nonsingular model of the curve given by the affine equation $X^n+Y^m=1$, where $m < n$ with $(m,n) \neq (3,4)$. Then $C_m$ is a normal subgroup of $\aut_{\kk}(\xx)$, and
\begin{equation}\nonumber
\aut_{\kk}(\xx)/C_m \cong \begin{cases}
C_{n}, \ \text{if $m \nmid n$; }\\
D_{n}, \ \text{if $m | n$ but $n-1$ is not a power of $p$; }\\
\PGL(2,p^r), \ \text{if $m | n$ and $n-1=p^r$ for some $r>0$. }
\end{cases}
\end{equation}
\end{thm}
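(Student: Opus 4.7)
The plan is to realize $C_m$ concretely as $\langle(X,Y)\mapsto(X,\zeta_m Y)\rangle$ and to analyze the cyclic quotient cover $\pi_1\colon\xx\to\mathbb{P}^1_X$ with fixed field $\kk(X)$. Its branch locus consists of the $n$ totally ramified points $X=\zeta_n^k$ ($k=0,\ldots,n-1$), together with $X=\infty$ when $m\nmid n$ (with ramification index $m/\gcd(m,n)<m$). Under the hypotheses $m<n$ and $(m,n)\neq(3,4)$, a short case inspection gives $2m<n$ in all but a handful of borderline situations, so Proposition~\ref{konto1} forces $C_m\triangleleft\aut_{\kk}(\xx)$; the residual borderline cases are handled directly, observing that any $\sigma\in\aut_{\kk}(\xx)$ must permute the totally ramified places of $\pi_1$ and that $C_m$ is the unique subgroup of order $m$ fixing each of these places pointwise, so $\sigma C_m\sigma^{-1}=C_m$. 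Write $\bar A:=\aut_{\kk}(\xx)/C_m\hookrightarrow\PGL(2,\kk)$; the problem reduces to computing the set-stabilizer in $\PGL(2,\kk)$ of the branch locus of $\pi_1$.

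If $m\nmid n$, the point $\infty$ is the unique branch point of $\pi_1$ whose ramification index is strictly less than $m$, and hence must be fixed by $\bar A$; thus $\bar A$ embeds in the affine group of $\mathbb{A}^1$. The identity $\sum_k\zeta_n^k=0$ kills any translation part, and preserving the set $\mu_n$ of $n$th roots of unity forces a dilation $X\mapsto aX$ with $a^n=1$, so $\bar A\subseteq C_n$; the obvious lift $(X,Y)\mapsto(\zeta_n X,Y)$ yields the reverse inclusion, giving $\bar A\cong C_n$. If $m\mid n$, the branch locus is exactly $\Sigma=\mu_n$, and $D_n\subseteq\bar A$ is exhibited by the rotations together with a lift of $X\mapsto 1/X$ of the form $(X,Y)\mapsto(1/X,\xi Y/X^{n/m})$, where $\xi\in\kk$ satisfies $\xi^m=-1$ (direct substitution into $Y^m=1-X^n$ confirms this is a well-defined automorphism of $\xx$). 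To bound $\bar A$ from above, I would invoke Dickson's classification of the finite subgroups of $\PGL(2,\bar{\mathbb F}_p)$ and test each candidate against the requirement of stabilizing $\Sigma$. Any nontrivial unipotent element of $\PGL(2,\kk)$ has a unique fixed point on $\mathbb{P}^1$ and, since $\gcd(n,p)=1$, forces $|\Sigma|\equiv 1\pmod p$, i.e.\ $p\mid n-1$; a refinement shows that a nontrivial $p$-subgroup can occur in $\bar A$ only when $n-1=p^r$, and in that case a Möbius change of coordinates identifies $\Sigma$ with $\mathbb{P}^1(\mathbb{F}_{p^r})$ and yields the full $\PGL(2,p^r)\subseteq\bar A$, whose generators lift to $\xx$ by Hermitian-type formulas. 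Otherwise the only remaining candidates beyond $D_n$ are the polyhedral groups $A_4$, $S_4$, $A_5$, and the hypotheses $n\neq 4$, $m\neq 3$, $(m,n)\neq(3,4)$ rule out these low-genus coincidences, leaving $\bar A\cong D_n$.

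The main obstacle is this upper bound in the case $m\mid n$: excluding every finite subgroup of $\PGL(2,\kk)$ strictly larger than $D_n$ (or strictly between $D_n$ and $\PGL(2,p^r)$) from stabilizing $\mu_n$, and verifying that the entire $\PGL(2,p^r)$-action really lifts to $\xx$ when $n=p^r+1$. This requires combining Dickson's list with delicate fixed-point and orbit-length counts depending on the $p$-content of $n-1$, together with the explicit normalization that sends $\mu_n$ to $\mathbb{P}^1(\mathbb{F}_{p^r})$; the exceptional pairs excluded in the hypotheses are precisely those where additional symmetries not accounted for by this dichotomy would otherwise intrude.
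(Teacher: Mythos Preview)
The paper does not prove this theorem: it is quoted as a result of Kontogeorgis \cite{Ko}, the text immediately preceding it reading ``The full automorphism group $\aut_\kk(\xx)$ of the curves $\xx$ of case (I) above is completely characterized by Kontogeorgis \cite{Ko}\ldots\ We summarize such characterization in the following Theorem.'' There is therefore no proof in the paper itself to compare your proposal against.

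For what it is worth, your outline has the right shape and matches Kontogeorgis's original strategy: exhibit $C_m$ as the Galois group of the Kummer cover $\xx\to\mathbb{P}^1_X$, establish its normality, and then pin down $\aut_\kk(\xx)/C_m$ inside $\PGL(2,\kk)$ as the set-stabilizer of the branch locus via Dickson's classification. There is, however, a genuine gap in your normality step. Proposition~\ref{konto1} requires $2m<n$, and under the bare hypothesis $m<n$ this fails for a wide range of pairs --- $(5,7)$, $(5,9)$, $(7,11)$, and so on --- not just a ``handful of borderline situations''. Your fallback, that any $\sigma\in\aut_\kk(\xx)$ must permute the totally ramified places of $\pi_1$, is circular as written: those places are precisely the fixed points of $C_m$, so $\sigma$ carries them to the fixed points of $\sigma C_m\sigma^{-1}$, and asserting that these two sets coincide is exactly the normality you are trying to prove. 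What is missing is an \emph{intrinsic} characterization of that fixed locus (Kontogeorgis supplies one via Weierstrass gap sequences), after which your uniqueness claim and the rest of the argument go through.
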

If $m=n$, the case (I) provides the Fermat curve $X^n+Y^n=1$. In this situation, it is well known that if $n=p^r+1$ for some $r>0$, then $\aut_{\kk}(\xx)\cong \PGU(3,p^r)$ (see e.g. \cite[Proposition 11.30]{HKT}) and if  $n \neq p^r+1$ for all $r>0$, then $C_n \times C_n$ is normal in $\aut_{\kk}(\xx)$ and $\aut_{\kk}(\xx)/(C_n \times C_n) \cong S_3$ (see \cite[Theorem 11.31]{HKT}).

In view of such characterizations, in what follows in this section we assume that (II) above holds. Our main goal is to characterize $\aut_\kk(\ff)$, where $\ff:aX^nY^m+X^n+Y^m=1$, where $\max\{n,m\}>2$. Let $\zeta_1, \zeta_2, c_1, c_2 \in \kk$ such that $\zeta_1$ (resp. $\zeta_2$) is an $n$-th (resp. $m$-th) primitive root of the unity, and $c_1^n=c_2^m=-a^{-1}$. Let $x$ and $y$ such that $\kk(\ff)=\kk(x,y)$ and $ax^ny^m+x^n+y^m=1$. From the equation of $\ff$, one can see that $\aut_{\kk}(\ff)$ contains the following elements:
$$
\sigma_1:(x,y) \mapsto (\zeta_1 x,y), \ \ \sigma_2:(x,y) \mapsto (x,\zeta_2 y) \ \ \text{ and } \ \ \mu: (x,y) \mapsto \left(\frac{c_1}{x},\frac{c_2}{y} \right).
$$
Here, we have $C_n=\langle \sigma_1 \rangle$, $C_m=\langle \sigma_2 \rangle$ and $\mu$ is an involution that normalizes both $C_n$ and $C_m$. Thus these three automorphisms generate a subgroup $G \rtimes \langle \mu \rangle < \aut_\kk(\ff)$ of order $2mn$. Moreover, since $\mu \sigma_i \mu =\sigma_i^{-1}$ for $i=1,2$, we have that $C_k \rtimes \langle \mu \rangle \cong D_k$, where $D_k$ denotes a dihedral group of order $2k$, with $k \in \{m,n\}$. 

If $a=1$, we can choose $c_1$ (resp. $c_2$) as a $2n$-th (resp. $2m$-th) primitive root of the unity. Hence $\zeta_i=c_i^2$, for $i=1,2$, and $\aut_\kk(\ff)$ contains
$$
\tau_1:(x,y) \mapsto \left(c_1 x , \frac{\zeta_2}{y}\right) \ \ \ \text{ and } \ \ \ \tau_2:(x,y) \mapsto \left(\frac{\zeta_1}{x} , c_2 y\right).
$$
Note that $\tau_i^2=\sigma_i$ for $i=1,2$, but $\tau_1 \tau_2 \neq \tau_2 \tau_1$. Furthermore, if $m=n$ we have the following extra involution
$$
\theta: (x,y) \mapsto (y,x).
$$ 

For convenience, from now on, $\xx$ stands for a nonsingular model of $\ff$. In order to characterize the full automorphism group of $\xx$, we will use the following results.

\begin{lem}\label{orbdih}
The group $C_n \rtimes \langle \mu \rangle$ acts transitively on $\Omega_1 \cup \Omega_2$, and $C_m \rtimes \langle \mu \rangle$ acts transitively on $\Omega_3 \cup \Omega_4$. 
\end{lem}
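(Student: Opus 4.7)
The plan is to pin down the four short orbits $\Omega_1,\ldots,\Omega_4$ explicitly in the affine model $\ff\colon ax^ny^m+x^n+y^m=1$ and then verify by direct computation that $\mu$ interchanges $\Omega_1$ with $\Omega_2$ and $\Omega_3$ with $\Omega_4$, while $C_n$ (resp.\ $C_m$) acts regularly on each constituent of the corresponding pair. Since $\sigma_1$ fixes $y$ and $\sigma_2$ fixes $x$, one has $\kk(\xx/C_n)=\kk(y)$, $\kk(\xx/C_m)=\kk(x)$ and $\kk(\xx/G)=\kk(x^n)=\kk(y^m)$; in particular, the two ramification points of the Kummer cover $\xx/C_n\to\xx/G$ are $y=0$ and $y=\infty$, and those of $\xx/C_m\to\xx/G$ are $x=0$ and $x=\infty$.

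Using the defining equation in the two forms $x^n(ay^m+1)=1-y^m$ and $y^m(ax^n+1)=1-x^n$, I would set $y=0$ to obtain $\Omega_1=\{(\zeta_1^i,0):0\le i<n\}$, let $y\to\infty$ to obtain $x^n=-a^{-1}=c_1^n$ and hence $\Omega_2=\{(c_1\zeta_1^i,\infty):0\le i<n\}$, and symmetrically $\Omega_3=\{(0,\zeta_2^j):0\le j<m\}$ and $\Omega_4=\{(\infty,c_2\zeta_2^j):0\le j<m\}$. Then $\sigma_1\colon(x,y)\mapsto(\zeta_1 x,y)$ cyclically permutes the first coordinates of both $\Omega_1$ and $\Omega_2$, so $C_n$ acts regularly on each of these two orbits separately. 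A direct calculation shows $\mu(\zeta_1^i,0)=(c_1\zeta_1^{-i},\infty)\in\Omega_2$, so $\mu$ interchanges $\Omega_1$ and $\Omega_2$. Combining the two observations, the group $C_n\rtimes\langle\mu\rangle$, whose order equals $2n=|\Omega_1\cup\Omega_2|$, acts transitively (indeed regularly) on $\Omega_1\cup\Omega_2$.

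The second assertion follows by the completely symmetric argument, with $\sigma_2$ and $\zeta_2$ replacing $\sigma_1$ and $\zeta_1$, together with the check $\mu(0,\zeta_2^j)=(\infty,c_2\zeta_2^{-j})\in\Omega_4$. I do not see any genuine obstacle in this proof: the only step requiring care is the identification of the short orbits sitting above the two branch points at infinity on $\xx/G$, which is handled by the limiting forms of the defining equation in conjunction with the fact, already established in Proposition \ref{genus}, that these orbits have the predicted sizes $n$ and $m$ and are stabilized pointwise by $C_m$ and $C_n$ respectively.
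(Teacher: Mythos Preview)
Your proposal is correct and follows essentially the same approach as the paper: both identify the four short orbits concretely via the divisors of $x$ and $y$ (equivalently, as the fibers above $y=0$, $y=\infty$, $x=0$, $x=\infty$), observe that $\sigma_1$ acts transitively on each of $\Omega_1$ and $\Omega_2$, and check that $\mu$ swaps zeros and poles of $y$ to conclude. Your version is simply a bit more explicit in writing down coordinates and the image of $\mu$, but there is no substantive difference in method.
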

\begin{proof}
Regarding $\kk(\xx)$ as a Kummer extension of the rational function fields $\kk(x)$ and $\kk(y)$, it can be seen that, up to re-labeling the indices,
$$
\div(x)=\sum_{P \in \Omega_3}P-\sum_{Q \in \Omega_4}Q \ \ \ \text{ and } \ \ \  \div(y)=\sum_{R \in \Omega_1}R-\sum_{S \in \Omega_2}S.
$$
More precisely, $\Omega_1$ corresponds to $\{(\zeta_1^k:0:1)\ \ | \ 0 \leq k \leq n-1\}$ and $\Omega_2$ consists of the points of $\xx$ centered at $(0:1:0) \in \ff$, while $\Omega_3$ corresponds to $\{(0:\zeta_2^s:1)\ \ | \ 0 \leq s \leq m-1\}$ and $\Omega_4$ is the set of points of $\xx$ centered at $(1:0:0) \in \ff$. Clearly $\sigma_1$ acts transitively on $\Omega_1$ and $\Omega_2$,  while $\mu$ sends a zero of $y$ on a pole of $y$, and vice-versa. Hence the first statement follows. The second is analogous.
\end{proof}

\begin{lem}\label{interchange}
Assume that there exists $\eta \in \aut_\kk(\xx)$ such that $\eta$ preserves the set of zeros and the set of poles of $x$ and interchanges the set of zeros with the set of poles of $y$. Then $a=1$.
\end{lem}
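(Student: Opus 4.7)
The strategy is to use the divisor data from Lemma \ref{orbdih} to pin down how $\eta$ acts on the coordinate functions $x$ and $y$ up to a scalar, and then to substitute into the defining equation of $\ff$ to obtain a constraint on $a$. Concretely, Lemma \ref{orbdih} yields $\div(x) = \sum_{P\in \Omega_3} P - \sum_{Q\in \Omega_4} Q$ and $\div(y) = \sum_{R\in \Omega_1} R - \sum_{S\in \Omega_2} S$, with simple zeros and poles. The assumption that $\eta$ preserves the zero and pole sets of $x$ setwise forces $\div(\eta(x)) = \div(x)$, so $\eta(x)/x$ is a unit in $\kk(\xx)$ and hence $\eta(x) = \lambda x$ for some $\lambda \in \kk^{*}$. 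Since $\eta$ interchanges the zero and pole sets of $y$, the same reasoning gives $\div(\eta(y)) = -\div(y)$, so $\eta(y) = \mu/y$ for some $\mu \in \kk^{*}$.

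Next, I apply $\eta$ to the defining relation $ax^n y^m + x^n + y^m = 1$, using the fact that $\eta$ fixes $\kk$ pointwise. Substituting $\eta(x) = \lambda x$ and $\eta(y) = \mu/y$ and then multiplying through by $y^m$ rearranges to
\[
y^m(\lambda^n x^n - 1) \;=\; -\mu^m(a \lambda^n x^n + 1).
\]
Combining this with $y^m = (1 - x^n)/(ax^n + 1)$, which comes directly from the defining equation, and clearing denominators produces a polynomial identity in $x$; this is a genuine identity in $\kk[x]$ because $x$ is transcendental over $\kk$ (as $\kk(x) = \kk(\xx/C_m) \subsetneq \kk(\xx)$). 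Reading off the constant, $x^n$, and $x^{2n}$ coefficients yields, respectively, $\mu^m = 1$, $a^2 = 1$, and $(\lambda^n + 1)(1 + a) = 0$.

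From $a^2 = 1$ we get $a = \pm 1$. The value $a = -1$ is ruled out because the defining polynomial then factors as
\[
-x^n y^m + x^n + y^m - 1 \;=\; -(x^n - 1)(y^m - 1),
\]
contradicting the irreducibility of $\xx$. Hence $a = 1$. I expect the conceptually important step to be the first one, where the divisor information from Lemma \ref{orbdih} reduces $\eta$ on $\{x,y\}$ to just two scalar parameters; the coefficient matching is routine bookkeeping, and the visible factorization that excludes $a = -1$ is the decisive final move.
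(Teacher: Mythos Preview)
Your proof is correct and follows essentially the same approach as the paper: deduce $\eta(x)=\lambda x$ and $\eta(y)=\mu/y$ from the divisor data, substitute into the defining relation, and compare coefficients to force $a^2=1$ (a small slip: it is the $x^{2n}$-coefficient together with the constant term that yields $a^2=1$, while the $x^n$-coefficient yields $(\lambda^n+1)(1+a)=0$). You make explicit the exclusion of $a=-1$ via the factorization $-(X^n-1)(Y^m-1)$, which the paper leaves implicit in its appeal to irreducibility.
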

\begin{proof}
Since $\eta$ preserves the set of zeros and the set of poles of $x$, then $\div(\eta(x))=\div(x)$, which means that $\eta(x)=\alpha x$ for some $\alpha \in \kk^{*}$. In the same way, $\eta$ interchanging the set of zeros with the set of poles of $y$ gives that $\div(\eta(y))=\div(y^{-1})$, and so $\eta(y)=\frac{\beta}{y}$ for some $\beta \in \kk^{*}$. Therefore, via the equation $a\eta(x)^n \eta(y)^m+\eta(x)^n+\eta(y)^m=1$, we obtain
$
(\alpha^n)x^ny^m+(a\alpha^n\beta^m)x^n-y^m+\beta^m=0,
$
which leads us to $a=1$, $\alpha^n=-1$ and $\beta^m=1$.
\end{proof}

\subsection{The case $n \neq m$}

We start our investigation with the case $m \neq n$. So, without loss of generality, assume that $m<n$.  Following notation of the previous section, we know that $G$ has $4$ short orbits on $\xx$, namely $\Omega_1$, $\Omega_2$, $\Omega_3$ and $\Omega_4$, where $\#(\Omega_1)=\#(\Omega_2)=n$ and $\#(\Omega_3)=\#(\Omega_4)=m$. Moreover, $\Omega_1 \cup \Omega_2$ is the precise set of fixed points of $C_m$ and  $\Omega_3 \cup \Omega_4$ is the precise set of fixed points of $C_n$. We begin from the following result. 

\begin{lem}\label{cmnormal}
Assume that $m<n$. Then $C_m$ is a normal subgroup of $\aut_{\kk}(\xx)$.
\end{lem}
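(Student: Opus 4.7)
The plan is to apply Proposition \ref{konto1} directly to the cyclic cover $\xx \to \xx/C_m$. By property (P), the quotient curve $\xx/C_m$ has genus zero, and since $\kk$ is algebraically closed, $\kk(\xx/C_m)$ is a rational function field over $\kk$. The extension $\kk(\xx):\kk(\xx/C_m)$ is cyclic with Galois group $C_m$, so in the notation of Proposition \ref{konto1} one has $r = m$.

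Next I would count the places of $\kk(\xx/C_m)$ that are completely ramified in $\kk(\xx):\kk(\xx/C_m)$. These correspond bijectively to the points of $\xx$ whose stabilizer in $C_m$ is the whole of $C_m$, i.e., to the fixed points of $C_m$ on $\xx$. As recorded in the paragraph preceding Lemma \ref{orbdih}, the fixed-point set of $C_m$ is exactly $\Omega_1 \cup \Omega_2$. Since $\Omega_1$ and $\Omega_2$ are distinct $G$-orbits lying above distinct points $P_1, P_2$ of $\xx/G$, their images in the intermediate quotient $\xx/C_m$ are disjoint; each such fixed point is a trivial $C_m$-orbit of size $1$, so $\pi_2(\Omega_1)$ and $\pi_2(\Omega_2)$ together give $n+n=2n$ distinct completely ramified places. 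Hence $s = 2n$.

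The hypothesis $m<n$ now yields $2r = 2m < 2n = s$, and Proposition \ref{konto1} immediately implies that $C_m = \gal(\kk(\xx):\kk(\xx/C_m))$ is normal in $\aut_{\kk}(\xx)$. The argument is a one-line invocation of Proposition \ref{konto1} once the right intermediate quotient is chosen, and the main (mild) point to verify is the clean count of completely ramified places: one must check that $\Omega_1$ and $\Omega_2$ project to disjoint subsets of $\xx/C_m$, but this is automatic from the fact that their further images in $\xx/G$ are already distinct.
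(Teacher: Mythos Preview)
Your argument is correct and is exactly the approach the paper intends: the paper's own proof is literally the single sentence ``It follows directly from the above discussion and Proposition \ref{konto1}'', where the ``above discussion'' is the observation that $\Omega_1\cup\Omega_2$ (of cardinality $2n$) is the precise fixed-point set of $C_m$, giving $s=2n>2m=2r$. One small referencing slip: the statement that $\Omega_1\cup\Omega_2$ is the fixed-point set of $C_m$ appears in the opening paragraph of Subsection~6.1 (immediately before Lemma \ref{cmnormal}), not before Lemma \ref{orbdih}.
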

\begin{proof}
It follows directly from the above discussion and Proposition \ref{konto1}.
\end{proof}

Let $f \geq h$ be the smallest integer such that $\aut_{\kk}(\xx)=\aut_{\F_{p^f}}(\xx)$. From Lemma \ref{cmnormal}, the full automorphism group of the quotient curve $\xx/C_m$ has  a subgroup $H$ defined over $\F_{p^f}$ isomorphic to $\aut_{\kk}(\xx)/C_m$. Since $\xx/C_m$ is rational and defined over $\F_{p^h}$, we have $\aut_{\F_{p^f}}(\xx/C_m) \cong \PGL(2,p^f)$, and thus $H$ has to be isomorphic to one of the groups in \cite[Theorem 3]{VM}. Recall that $\aut_{\kk}(\xx)$ has always $C_n \rtimes \langle \mu \rangle \cong D_n$ as a subgroup. Since such group meets $C_m$ trivially, we conclude that $H$ has a subgroup isomorphic to $D_n$. With this on hands, we are able to prove the following.

\begin{thm}\label{fullpar}
Assume that $m<n$, with $n \neq 4$. If $\aut_{\kk}(\xx)$ has no $p$-subgroup, then
\begin{equation}\nonumber
\aut_{\kk}(\xx)/C_m \cong \begin{cases}
D_{2n}, \ \text{if $a = 1$, }\\
D_{n}, \ \text{if $a \neq 1$. } 
\end{cases}
\end{equation}
\end{thm}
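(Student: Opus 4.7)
The plan is to identify $H := \aut_\kk(\xx)/C_m$, which by Lemma \ref{cmnormal} embeds as a tame subgroup of $\aut_\kk(\xx/C_m) \cong \PGL(2,\kk)$; Dickson's theorem \cite[Theorem 3]{VM} then restricts $H$ to be cyclic, dihedral, or isomorphic to $A_4$, $S_4$, or $A_5$. I would first establish the lower bound inside $\PGL(2,\kk)$: in the quotient $\xx/C_m \cong \mathbb{P}^1_x$, the automorphisms $\sigma_1$ and $\mu$ descend to $\bar{\sigma}_1 : x \mapsto \zeta_1 x$ and $\bar{\mu} : x \mapsto c_1/x$ satisfying $\bar\mu\bar\sigma_1\bar\mu = \bar\sigma_1^{-1}$, whence $\langle \bar\sigma_1, \bar\mu \rangle \cong D_n \subseteq H$. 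When $a=1$, $\tau_1$ descends to $\bar\tau_1 : x \mapsto c_1 x$ of order $2n$ (as then $c_1^n = -a^{-1} = -1$), and $\langle \bar\tau_1, \bar\mu \rangle \cong D_{2n} \subseteq H$.

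For the matching upper bound I would enumerate the automorphisms $\eta \in \aut_\kk(\xx)$ directly. Starting from the Kummer relation $y^m = (1-x^n)/(ax^n+1)$, every $\eta$ descends to a M\"obius transformation $\phi$ of $\xx/C_m \cong \mathbb{P}^1_x$ preserving the branch locus $B = B_1 \sqcup B_2$ of $\pi_2$ (with $B_1 = \{x^n=1\}$, $B_2 = \{x^n = -a^{-1}\}$), and $\eta(y)$ takes the form $g(x)\, y^{\pm 1}$ for some $g \in \kk(x)$. Substituting into the equation of $\xx$ produces a compatibility identity for $g(x)^m$ whose multiplicity analysis (using $n \geq 3$, $n \neq 4$) forces $\phi$ to fix or swap $\{0,\infty\}$, i.e.\ $\phi(x) = \alpha x$ or $\phi(x) = \alpha/x$, and correspondingly $\eta(y) = \beta y$ or $\eta(y) = \beta/y$. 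The four resulting combinations are: $(i)$ $\eta(x)=\alpha x$, $\eta(y)=\beta y$, forcing $\alpha^n=\beta^m=1$, so $\eta \in G$ and $\phi \in \bar{C}_n$; $(ii)$ $\eta(x)=\alpha/x$, $\eta(y)=\beta y$, forcing $a=-1$, which is excluded since $\xx$ would then be reducible; $(iii)$ $\eta(x)=\alpha x$, $\eta(y)=\beta/y$, fitting the hypothesis of Lemma \ref{interchange} and thus forcing $a=1$; $(iv)$ $\eta(x)=\alpha/x$, $\eta(y)=\beta/y$, giving $\eta \in \mu G$ and $\phi \in \bar\mu\bar{C}_n$.

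Combining the cases, for $a \neq 1$ only $(i)$ and $(iv)$ survive, so $H \subseteq \bar{C}_n \cup \bar\mu\bar{C}_n = D_n$, and together with the lower bound this yields $H = D_n$. For $a = 1$, case $(iii)$ additionally contributes $\bar\tau_1$ with $\bar\tau_1^2 = \bar\sigma_1$, enlarging the cyclic part of $H$ to $\bar{C}_{2n}$ and yielding $H = D_{2n}$. The main obstacle is the preliminary step of restricting the induced M\"obius $\phi$ to the two families $\alpha x$ and $\alpha/x$: this reduces to verifying that the rational function $[(1-\phi(x)^n)(1+ax^n)]/[(1-x^n)(1+a\phi(x)^n)]$ is an $m$-th power in $\kk(x)$ only for the listed $\phi$, an argument carried out by comparing the locations and multiplicities of the $n$ roots of the four polynomials involved. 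The same bound $|H| \leq 4n$ then rules out the exceptional Dickson candidates: $A_4$ contains no $D_n$ for $n \geq 3$; $S_4$ contains $D_n$ only for $n \in \{3,4\}$, and under $n \neq 4$ only $n=3$ remains, but $|S_4|=24 > 12 = 4n$; $A_5$ contains $D_n$ only for $n \in \{3,5\}$, while $|A_5| = 60 > 4n$ in both cases.
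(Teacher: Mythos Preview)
Your strategy is genuinely different from the paper's. The paper argues structurally: since $D_n \leq H := \aut_\kk(\xx)/C_m$ and $H$ is tame, Dickson's list forces $H$ to be dihedral, $S_4$, or $A_5$; the exceptional groups are then eliminated via orbit--stabiliser and Riemann--Hurwitz on the cover $\xx \to \xx/\aut_\kk(\xx)$, and finally the orbit $\pi_2(\Omega_1\cup\Omega_2)$ of size $2n$ is used to pin down $\ell \in \{n,2n\}$ in $D_\ell$. You instead try to enumerate $\aut_\kk(\xx)$ directly through the Kummer presentation, which is more elementary and, when it works, yields the bound $|H|\le 4n$ without ever invoking Riemann--Hurwitz.

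There is, however, a real gap in your ``main obstacle'' step. Writing $\eta(y)=g(x)\,y^{j}$ (your claim $j=\pm 1$ is not justified a priori; one only knows $\gcd(j,m)=1$), the $m$-th--power condition on
\[
g(x)^m=\frac{(1-\phi(x)^n)(1+ax^n)^{j}}{(1+a\phi(x)^n)(1-x^n)^{j}}
\]
forces, for $m\ge 3$, that each point of $B_1$ lies in exactly one of $\phi^{-1}(B_1),\phi^{-1}(B_2)$ with the \emph{same} choice throughout (else both $m\mid j-1$ and $m\mid j+1$); this yields $\phi(B_1)\in\{B_1,B_2\}$, hence $\phi\in\mathrm{Stab}(\mu_n)\cdot\{1,\bar\mu\}$ and indeed $\phi(x)=\alpha x$ or $\alpha/x$. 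But for $m=2$ the divisibility conditions $2\mid j\pm 1$ are vacuous, and the multiplicity analysis only gives $\phi(B)=B$. Over an algebraically closed field, \emph{every} $\phi$ with $\phi(B)=B$ then lifts (the quotient ratio has even divisor, hence is a square), so for $m=2$ one has $H=\mathrm{Stab}_{\PGL(2,\kk)}(B)$, and nothing you wrote prevents this from being larger than $D_{2n}$ when $n=3$ (the size--$6$ orbit of $S_4$ on $\mathbb P^1$ does occur). Your final paragraph rules out $S_4$ and $A_5$ using the bound $|H|\le 4n$, but that bound was derived from the very enumeration that fails for $m=2$, so the argument is circular there. (Incidentally, your case~(ii) actually yields $a^2=1$, i.e.\ $a=1$ after excluding $a=-1$; the conclusion that case~(ii) contributes nothing new for $a\ne 1$ is still correct, but the stated reason ``forcing $a=-1$'' is not.) To close the gap you would need an independent argument for $m=2$, e.g.\ analysing $\mathrm{Stab}(B)$ for $B=\mu_n\cup c_1\mu_n$ directly, or reverting to the paper's short--orbit/Riemann--Hurwitz method for the small cases $n\in\{3,5\}$.
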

\begin{proof}
Since $D_n<  \aut_{\kk}(\xx)/C_m$, by \cite[Theorem 3]{VM}  $\aut_{\kk}(\xx)/C_m$ is isomorphic to one of the following groups: $D_\ell$ with $\ell | p^f \pm 1$, $S_4$ and $A_5$ (the remaining groups in the Hauptsatz \cite[Theorem 3]{VM} Note that, since the point-set $\Omega_1 \cup \Omega_2 \subset \xx$ is the precise set of fixed points of $C_m$, Lemma \ref{orbdih}  gives that $\Omega_1 \cup \Omega_2$ is a short orbit of $\aut_{\kk}(\xx)$. Assume that $\aut_{\kk}(\xx)/C_m \cong A_5$. Then, since $D_5$ is the unique dihedral subgroup of $A_5$, we obtain $n=5$. Moreover, $|\aut_{\kk}(\xx)|=60m$, and by the Orbit-Stabilizer Theorem, the stabilizer of a point $P \in \Omega_1 \cup \Omega_2$ in $\aut_{\kk}(\xx)$ has size $6m$. Hence, the stabilizer of $\pi_2(P) \in \xx/C_m$ in $\aut_{\kk}(\xx)/C_m$ has size $6$. Thus, \cite[Theorem 1]{VM} provides that $\aut_\kk(\xx)$ has only two short orbits: one of size $10$ and one of size $12m$. Then, by Riemman-Hurwitz formula (\ref{rhso}) applied to the cover $\xx \rightarrow \xx/\aut_\kk(\xx)$, we obtain
$m=0$, a contradiction. Thus $\aut_{\kk}(\xx)/C_m \not\cong A_5$. We also have $\aut_{\kk}(\xx)/C_m \not\cong S_4$, from $D_4$ being the unique dihedral subgroup of $S_4$, which implies $n=4$.

Therefore, $\aut_{\kk}(\xx)/C_m \cong D_\ell$ for some $\ell$. From the normality of $C_m$, there exists $\bar{C}_n < \aut_{\kk}(\xx)/C_m$ such that $\bar{C}_n \cong C_n$. In particular, $n | \ell$. Since $\Omega_3 \cup  \Omega_4$ is pointwise fixed by $C_n$, both points $\pi_2(\Omega_3), \pi_2(\Omega_4) \in \xx/C_m$ are fixed by $\bar{C}_n$. Thus from \cite[Theorem 1]{VM} it follows that all the points of $(\xx/C_m) \backslash\{\pi_2(\Omega_3), \pi_2(\Omega_4)\}$ are in long orbits of $\bar{C}_\ell$, where $\bar{C}_\ell$ is the cyclic normal subgroup of $D_\ell$. Hence the set $\pi_2(\Omega_1 \cup \Omega_2)$ is an union of long orbits of $\bar{C}_\ell$. Therefore, since $\#(\pi_2(\Omega_1 \cup \Omega_2))=\#(\Omega_1 \cup \Omega_2)=2n$, we conclude that $\aut_{\kk}(\xx)/C_m \cong D_\ell$, with $\ell \in \{n,2n\}$. We will proceed to prove that $\ell=2n$ if and only if $a=1$. If $a=1$, we have defined on $\xx$ the automorphism $\tau_1:(x,y) \mapsto \left(c_1 x , \frac{\zeta_2}{y}\right)$.
The group $\langle \tau_1 \rangle$ is cyclic of order $2n$, and it meets $C_m$ trivially.  Thus $\langle \tau_1 \rangle \cong \bar{C}_\ell$ and $\ell=2n$. Assume now that $\ell=2n$. As before, denote by $\bar{C}_\ell$ the cyclic subgroup of $D_\ell$ of order $\ell$. Let $\bar{\delta}$ such that $\langle \bar{\delta} \rangle =\bar{C}_\ell$ and consider $\delta \in \aut_{\kk}(\xx)$ such that $\bar{\delta}$ is the image of $\delta$ under the natural group projection of $\aut_{\kk}(\xx)$ onto $\aut_{\kk}(\xx)/C_m$. On one hand, since $\bar{\delta}$ acts transitively on $\pi_2(\Omega_1 \cup \Omega_2)$ and $\bar{\delta}^2$ acts transitively on both $\pi_2(\Omega_1)$ and $\pi_2(\Omega_2)$, we see that $\bar{\delta}$ gives an injection from $\pi_2(\Omega_1)$ onto $\pi_2(\Omega_2)$. Thus $\delta$ maps bijectively the set of zeros onto the set of poles of $y$. On the other hand, $\bar{\delta}$ fixes both $\pi_2(\Omega_3), \pi_2(\Omega_4) \in \xx/C_m$, whence $\delta$ preserves both $\Omega_3$ and $\Omega_4$. In other words, $\delta$ preserves the set of zeros and the set of poles of $x$. Hence the result follows from Lemma \ref{interchange}.
\end{proof}

Now we study the case in which $\aut_{\kk}(\xx)$ has a $p$-subgroup. We start by pointing out that $\xx$ can have automorphisms of order $p$.

\begin{lem}\label{hyper}
Let $\xx$ be a nonsingular model of the hyperelliptic curve defined over $\kk$ by the equation $X^nY^2+X^n+Y^2=1$, where $n=\frac{p^r+1}{2}$ for some $r>0$. Then $\aut_{\kk}(\xx)/C_2 \cong \PGL(2,p^r)$.
\end{lem}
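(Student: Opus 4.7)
The plan is to reduce to the classification in Theorem \ref{konto2} (Kontogeorgis) via the explicit birational map already noted in Remark \ref{overlap}. The curve $\xx$ defined by $X^nY^2+X^n+Y^2=1$ is $\fq$-birationally equivalent to the curve $\xx'$ defined by $\bar{X}^{\,2n}+\bar{Y}^{\,2}=1$, through $(X,Y)\mapsto (\bar X,\bar Y)=\bigl(X,\tfrac{2Y}{Y^2+1}\bigr)$. Under this map, the direct computation $\bar{Y}(X,-Y)=\tfrac{-2Y}{1+Y^2}=-\bar{Y}(X,Y)$ shows that the generator $\sigma_2:(X,Y)\mapsto(X,-Y)$ of $C_2$ on $\xx$ is carried to the hyperelliptic involution $(\bar{X},\bar{Y})\mapsto(\bar{X},-\bar{Y})$ on $\xx'$, which in Kontogeorgis' notation is the cyclic group of order $m'=2$.

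The curve $\xx'$ then has the shape $\bar{X}^{\,n'}+\bar{Y}^{\,m'}=1$ with $m'=2$ and $n'=2n$. By hypothesis $n'=2n=p^r+1$, so $m'\mid n'$ and $n'-1=p^r$ is a power of $p$. The conditions of Theorem \ref{konto2} put us squarely in the third case of its statement, giving $\aut_\kk(\xx')/C_{m'}\cong \PGL(2,p^r)$. Since $\xx$ and $\xx'$ share the same function field, there is a canonical isomorphism $\aut_\kk(\xx)\cong\aut_\kk(\xx')$ which by the first paragraph sends $C_2$ to $C_{m'}$. Passing to the quotient yields $\aut_\kk(\xx)/C_2\cong \PGL(2,p^r)$.

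The only delicate point is that Theorem \ref{konto2} is stated under the blanket hypothesis $p>3$, $n'\neq 4$, $m'\neq 3$. For our application $m'=2\neq 3$ is automatic, and $n'\neq 4$ fails only in the sporadic case $p=3$, $r=1$, where $\xx'$ becomes the curve $\bar X^{4}+\bar Y^{2}=1$ in characteristic $3$. I expect this (together with the exclusion $p>3$) to be the main technical obstacle; it can be handled either by appealing to a direct inspection of the hyperelliptic curve $Y^2=1-X^{p+1}$ in characteristic $3$ (whose reduced automorphism group is known to be $\PGL(2,3)$, coming from the $\PGL(2,3)$-action preserving the $(p+1)$-th roots of unity in $\P^1$), or by revisiting Kontogeorgis' argument in that specific instance. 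Apart from these sporadic verifications, the reduction to Theorem \ref{konto2} is immediate.
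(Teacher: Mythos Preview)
Your proposal is correct and follows essentially the same approach as the paper: invoke the birational equivalence of Remark \ref{overlap} to pass to the model $\bar X^{\,p^r+1}+\bar Y^{\,2}=1$, and then appeal to Theorem \ref{konto2}. Your version is in fact more careful than the paper's two-line proof, since you explicitly check that the involution $C_2$ is carried to the hyperelliptic involution on the new model and you flag the sporadic hypotheses ($p>3$, $n'\neq 4$) that the paper's proof silently passes over.
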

\begin{proof}
Remark \ref{overlap} implies that $\xx$ has a plane model defined by $\bar{X}^{p^r+1}+\bar{Y}^2=1$. Therefore, the result follows from Theorem \ref{konto2}. 
\end{proof}

\begin{lem}\label{p-group}
Assume that $m<n$ and that $\aut_{\kk}(\xx)$ has a $p$-subgroup of order $p^r$, with $r>0$. Then such $p$-group has a single fixed point $P \in \xx$ such that $P \in \Omega_1 \cup \Omega_2$, and it acts semi-regularly on the remaining points of $\xx$. Furthermore,  $p^r | 2n-1$.
\end{lem}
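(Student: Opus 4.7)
The plan is to push the action of $S$ down to the rational quotient $\xx/C_m$, exploit the very restricted structure of $p$-subgroups of $\PGL(2,\kk)$ acting there, and then pull the information back up via $\pi_2$. Since $\gcd(|C_m|,p)=1$ we have $S\cap C_m=\{1\}$, and Lemma \ref{cmnormal} gives $C_m\lhd \aut_\kk(\xx)$, so the natural projection $\aut_\kk(\xx)\to \aut_\kk(\xx)/C_m\hookrightarrow \aut_\kk(\xx/C_m)\cong \PGL(2,\kk)$ sends $S$ isomorphically to a $p$-subgroup $\bar S$ of order $p^r$. As $p>2$, every finite $p$-subgroup of $\PGL(2,\kk)$ is elementary abelian, lies in the unipotent radical of a single Borel, and therefore fixes a unique point $\bar P\in\xx/C_m\cong\P^1$ and acts freely on its complement.

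The crux of the argument, and the main obstacle, is to show that $\bar P$ belongs to $\pi_2(\Omega_1\cup\Omega_2)$. The key observation is that $\Omega_1\cup\Omega_2$ is precisely the fixed locus of $C_m$, hence by normality it is $\aut_\kk(\xx)$-invariant; because $C_m$ acts trivially on it, $\pi_2$ restricts to a bijection onto an $\bar S$-invariant subset $T:=\pi_2(\Omega_1\cup\Omega_2)\subset\xx/C_m$ of cardinality $2n$. If $\bar P$ did not lie in $T$, then every $\bar S$-orbit on $T$ would have cardinality a positive power of $p$, forcing $p\mid 2n$; since $p>2$ this would give $p\mid n$, contradicting $\gcd(n,p)=1$. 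Hence $\bar P\in T$, and because $\pi_2$ is injective on the pointwise-fixed locus of $C_m$, the fibre $\pi_2^{-1}(\bar P)$ consists of a single point $P\in\Omega_1\cup\Omega_2$.

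Now every $S$-fixed point of $\xx$ projects to the unique $\bar S$-fixed point $\bar P$, so $P$ is the only fixed point of $S$ on $\xx$; this proves the first two assertions of the lemma. For semi-regularity, I would observe that if some non-identity $\sigma\in S$ fixed a point $Q\neq P$, then the cyclic $p$-group $\langle\sigma\rangle$ would have at least two fixed points on $\xx$, contradicting what the previous paragraph yields when applied to $\langle\sigma\rangle$ in place of $S$. Finally, because $\Omega_1\cup\Omega_2$ is $S$-invariant of size $2n$, contains $P$ as its unique $S$-fixed point, and is acted on freely by $S$ elsewhere, its remaining $2n-1$ points decompose into free $S$-orbits of size $p^r$, which yields $p^r\mid 2n-1$.
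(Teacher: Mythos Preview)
Your proof is correct and follows essentially the same route as the paper's: pass to $\xx/C_m$ via the normality of $C_m$, use that a $p$-subgroup of $\PGL(2,\kk)$ has a unique fixed point and is free elsewhere, observe that $\pi_2(\Omega_1\cup\Omega_2)$ is a $\bar S$-invariant set of size $2n$ (as the image of the fixed locus of the normal subgroup $C_m$), derive a contradiction with $p\nmid n$ if the fixed point lies outside it, and then count the remaining $2n-1$ points in free $\bar S$-orbits. The only cosmetic differences are that the paper cites \cite[Theorems~1 and~3]{VM} for the structure of $p$-subgroups of $\PGL(2,q)$ rather than invoking $\PGL(2,\kk)$ directly, and obtains $p^r\mid 2n$ (rather than your weaker $p\mid 2n$) in the contradiction step; neither affects the argument.
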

\begin{proof}
Assume that $\aut_{\kk}(\xx)$ has a subgroup $E$ of order $p^r$. Since $p \nmid m$, we have that $E$ meets $C_m$ trivially, and thus  by \cite[Theorem 3]{VM} there is an elementary abelian $p$-group $E_{p^r}<\aut_{\kk}(\xx)/C_m$ isomorphic to $E$ (in particular, $E$ must be abelian). From \cite[Theorem 1]{VM}, the group $E_{p^r}$ fixes only one point and acts semi-regularly on the remaining points of $\xx/C_m$. By the normality of $C_m$, we see that $E_{p^r}$ acts on the points of $\xx/C_m$ as $E$ does on the set of orbits of $C_m$ on $\xx$. Since $\Omega_1 \cup \Omega_2$ is the precise set of fixed points of $C_m$, this means that $E(\Omega_1 \cup \Omega_2)=\Omega_1 \cup \Omega_2$, and thus $E_{p^r}(\Gamma)=\Gamma$, where $\Gamma= \pi_2(\Omega_1 \cup \Omega_2)$. So let $Q \in \xx/C_m$ be the unique fixed point of $E_{p^r}$. If $Q \notin \Gamma$, then $\Gamma$ would be a union of long orbits of $E_{p^r}$, whence $p^r | \#(\Gamma)=2n$, a contradiction. Therefore, $Q \in \Gamma$, whence $P:=\pi_2^{-1}(Q) \in \Omega_1 \cup \Omega_2$ is the only fixed point of $E$. In addition, $\Gamma \backslash \{Q\}$ is a union of long orbits of $E_{p^r}$, which finishes the proof. 
\end{proof}

\begin{prop}\label{2n=p^r+1}
Assume that $m<n$ and that $\aut_{\kk}(\xx)$ has a Sylow $p$-subgroup of order $p^r$, with $r>0$. Then $\PSL(2,p^r)< \aut_{\kk}(\xx)/C_m$ and $n=\frac{p^r+1}{2}$. 
\end{prop}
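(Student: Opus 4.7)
The plan is to analyze $\bar H := \aut_\kk(\xx)/C_m$, which by Lemma \ref{cmnormal} embeds in $\aut_\kk(\xx/C_m) \cong \PGL(2,\kk)$, via Dickson's Hauptsatz. First, push the Sylow $p$-subgroup down: since $\gcd(m,p)=1$, the image $\bar E$ of $E_{p^r}$ has order $p^r$ and is a Sylow $p$-subgroup of $\bar H$. The dihedral $C_n \rtimes \langle \mu \rangle \leq \aut_\kk(\xx)$ projects to $\bar D_n \leq \bar H$, with $n \geq 3$ (from $\max\{m,n\}>2$ and $m<n$). The geometric input from Lemma \ref{p-group} is that $\bar E$ fixes a unique point $\bar P \in \Gamma := \pi_2(\Omega_1 \cup \Omega_2)$, while $\bar C_n$ fixes the two points $\pi_2(\Omega_3), \pi_2(\Omega_4) \notin \Gamma$; hence $\bar E$ and $\bar C_n$ share no fixed point on $\mathbb{P}^1 \cong \xx/C_m$.

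Second, I would sift through Dickson's list applied to $\bar H$. Cyclic and dihedral subgroups of $\PGL(2,\kk)$ cannot simultaneously accommodate a nontrivial Sylow $p$-subgroup and $\bar D_n$ with $n \geq 3$ in odd characteristic. Borel-type groups $E_{p^r} \rtimes C_t$ are excluded because the $n$ reflections of $\bar D_n$ would all have to project to the unique involution of the cyclic torus $C_t$, forcing $n = 1$. The exceptional subgroups $A_4, S_4, A_5$ have very restricted Sylow structure; combining the divisibility $p^r \mid 2n-1$ from Lemma \ref{p-group} with their allowable dihedral subgroup orders leaves only cases where either the conclusion already holds ($\bar H \supseteq \PSL(2,5) \cong A_5$ with $n=3$ and $p^r=5$) or which must be excluded by a Riemann--Hurwitz count on $\xx \to \xx/\aut_\kk(\xx)$ that exploits the forced orbit sizes and stabilizer structures in $A_5$. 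The remaining possibility is $\bar H \in \{\PSL(2,p^s),\PGL(2,p^s)\}$; the Sylow hypothesis forces $s=r$, giving $\PSL(2,p^r) \leq \bar H$.

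Finally, to pin down $n = (p^r+1)/2$: since $\bar C_n$ is cyclic in $\PGL(2,p^r)$ with two fixed points on $\mathbb{P}^1$, its order $n$ divides either $p^r-1$ or $p^r+1$. Combining with $p^r \mid 2n-1$ (which already gives $n \geq (p^r+1)/2$): the case $n \mid p^r-1$ forces $p^r \leq 2n-1 \leq 2p^r-3$, whence $p^r \mid 3$, yielding $p^r=3$ and $n=2$, a contradiction with $n \geq 3$; the case $n \mid p^r+1$ combined with the lower bound singles out $n = (p^r+1)/2$. The main obstacle is the exceptional case $\bar H \cong A_5$ in characteristic $3$ with $n=5$: this is the unique Dickson configuration surviving the pure divisibility screening of Lemma \ref{p-group} without already producing the conclusion, and its elimination requires the geometric Riemann--Hurwitz argument above, whereas every other case in the Dickson list falls to purely group-theoretic considerations.
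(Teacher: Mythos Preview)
Your overall strategy coincides with the paper's: reduce to Dickson's classification of $\bar H = \aut_\kk(\xx)/C_m \leq \PGL(2,p^f)$, then combine $p^r \mid 2n-1$ (Lemma~\ref{p-group}) with $n \mid p^r \pm 1$ to force $n = (p^r+1)/2$. The paper is far terser on the first step: it simply asserts, citing \cite[Theorem~3]{VM}, that $\bar H \in \{\PSL(2,p^r), \PGL(2,p^r)\}$ and moves directly to the Diophantine arithmetic, whereas you walk through the Dickson list explicitly. In doing so you correctly isolate the one configuration that survives the pure divisibility filter without already yielding the conclusion: $\bar H \cong A_5$ in characteristic~$3$ with $n=5$ and $p^r=3$.

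This is where your argument has a genuine gap. You say this case ``requires the geometric Riemann--Hurwitz argument above'', but you never carry it out, and it is \emph{not} the tame $A_5$ exclusion from the proof of Theorem~\ref{fullpar}: here $3 \mid |A_5|$, so the cover $\xx \to \xx/\aut_\kk(\xx)$ is wildly ramified and one must control the higher ramification filtration at the fixed point of the Sylow $3$-subgroup before any genus count goes through. Two smaller points: your Borel exclusion is correct in substance but mis-phrased---the clean argument is that $|D_n|=2n$ is prime to $p$, so $\bar D_n \cap E_{p^r} = 1$ and $\bar D_n$ would embed in the cyclic quotient $C_t$, impossible for $n\geq 3$ (there is no ``unique involution'' issue, since $E_{p^r}\rtimes C_t$ can have many involutions); and in the case $n \mid p^r-1$, the inequalities $p^r \leq 2n-1 \leq 2p^r-3$ together with $p^r \mid 2n-1$ force $2n-1=p^r$, whence $(p^r+1)/2 \mid p^r-1$ gives $p^r+1 \mid 4$, not ``$p^r \mid 3$'' as you wrote (though the conclusion $p^r=3$, $n=2$ is the same).
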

\begin{proof}
From \cite[Theorem 3]{VM}, $\aut_{\kk}(\xx)/C_m$ is isomorphic either to $\PSL(2,p^r)$ or to $\PGL(2,p^r)$, with $r|f$. In any case, $\PSL(2,p^r)< \aut_{\kk}(\xx)/C_m$. So, on one hand, since $p \nmid m$, we have that $\aut_{\kk}(\xx)$ has a subgroup $E \cong E_{p^r}$. Thus Lemma \ref{p-group} gives us that $p^r|2n-1$. On the other hand, \cite[Theorem 3]{VM} implies that $n | p^r \pm 1$. If $n | p^r - 1$, we would have integers $s_1,s_2>0$ such that $2n-1=s_1p^r$ and $p^r-1=s_2n$. Then $(2-s_1s_2)p^r=s_2+2$, which is only possible for $s_1=s_2=1$ and $p^r=3$, and so $n=2$ and $m=1$, a contradiction. If $n | p^r + 1$, as in the previous case, write $2n-1=s_1p^r$ and $p^r+1=s_2n$, with integers $s_1,s_2 >0$.  Thus $(2-s_1s_2)p^r=s_2-2$, which is only possible for $s_1=1$ and $s_2=2$. Then $n=\frac{p^r+1}{2}$.
\end{proof}

\begin{prop}\label{pgrouprat}
Assume that $m<n$ and that $\aut_{\kk}(\xx)$ has a Sylow $p$-subgroup $E$ of order $p^r$, with $r>0$. Then the quotient curve $\xx/E$ is rational.
\end{prop}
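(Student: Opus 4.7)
The plan is to exhibit a simple tame intermediate cover involving $\xx/E$ whose Riemann--Hurwitz data forces $g(\xx/E)=0$. Set $N := C_m E \leq \aut_\kk(\xx)$. Since $C_m \triangleleft \aut_\kk(\xx)$ by Lemma \ref{cmnormal} and $\gcd(m,p)=1$, one has $C_m \cap E=\{1\}$, $|N|=mp^r$, and $C_m \triangleleft N$, so $\xx/N=(\xx/C_m)/\bar{E}$, where $\bar E\cong E$ is the image of $E$ in $\aut_\kk(\xx/C_m)\subseteq\PGL(2,\kk)$. As $\xx/C_m\cong\p^1$ and $\bar E$ is a $p$-group on $\p^1$, the curve $\xx/N$ is rational, and the cover $\xx/C_m\to\xx/N$ has degree $p^r$, totally ramified only at the unique $\bar E$-fixed point $P'=\pi_2(P)$, where $P\in\xx$ is the fixed point supplied by Lemma \ref{p-group}.

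The main step is to analyze the ramification of the degree-$m$ cover $\psi:\xx/E\to\xx/N$. Let $P''\in\xx/N$ denote the image of $P'$. Since $P$ is the only point of $\xx$ above $P''$ (fully ramified of index $mp^r$) and $e(P|R)=p^r$ for its image $R\in\xx/E$, it follows that $R$ is the only point of $\xx/E$ above $P''$ and $e(R|P'')=m$. On the other hand, the $2n-1$ branch points of $\xx\to\xx/C_m$ distinct from $P'$ are preserved by $\bar{E}$, on which $\bar E$ acts freely; by Proposition \ref{2n=p^r+1} we have $p^r=2n-1$, so these points form a single free $\bar E$-orbit, collapsing to a single point $Q^\ast\in\xx/N$. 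The $p^r$ points of $\xx$ lying above $Q^\ast$ are precisely $(\Omega_1\cup\Omega_2)\setminus\{P\}$, a single free $E$-orbit by Lemma \ref{p-group}, each fully ramified of index $m$ over $\xx/C_m$; these collapse to a single point $S\in\xx/E$ with $e(S|Q^\ast)=m$. For any $Q''\in\xx/N\setminus\{P'',Q^\ast\}$, the $mp^r$ points of $\xx$ above $Q''$ are unramified in $\xx\to\xx/N$ and decompose into $m$ free $E$-orbits, yielding $m$ unramified points of $\xx/E$ above $Q''$. Hence $\psi$ is tame (as $\gcd(m,p)=1$) with exactly two totally ramified branch points.

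Applying the Riemann--Hurwitz formula (\ref{rhg}) to $\psi$ with $g(\xx/N)=0$ yields
\begin{equation*}
2g(\xx/E)-2 \;=\; m\bigl(2g(\xx/N)-2\bigr) + 2(m-1) \;=\; -2,
\end{equation*}
so $g(\xx/E)=0$ and $\xx/E$ is rational. The principal subtlety is the ramification analysis of $\psi$ inside a diamond of covers that is not Galois in general ($E$ need not be normal in $N$); the fact that $\psi$ has exactly two branch points, rather than more, depends critically on the identity $p^r=2n-1$ from Proposition \ref{2n=p^r+1}, which ensures that all non-fixed branch points of $\xx\to\xx/C_m$ collapse to a \emph{single} point of $\xx/N$.
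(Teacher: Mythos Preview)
Your argument is correct and takes a genuinely different route from the paper's proof. The paper works with the larger group $W$ satisfying $W/C_m\cong\PSL(2,p^r)$, determines the two short $W$-orbits on $\xx$, and then applies the Riemann--Hurwitz formula \emph{twice}: first to the cover $\xx\to\xx/W$ in order to compute the higher ramification sum $\sum_{i\ge 2}(|W_{P_1}^{(i)}|-1)=(m-1)(p^r-1)$, and then to the wild cover $\xx\to\xx/E$, using that $E=W_{P_1}^{(1)}$ to identify the two higher ramification sums and conclude $g(\xx/E)=0$.

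You instead bypass the wild ramification entirely by passing to the intermediate group $N=C_mE$ and computing $g(\xx/E)$ via the \emph{tame} cover $\psi:\xx/E\to\xx/N$. The diamond analysis is clean: because $C_m$ is normal, $E$ preserves $\Omega_1\cup\Omega_2$; because $2n-1=p^r$ (Proposition~\ref{2n=p^r+1}), the $p^r$ branch points of $\xx\to\xx/C_m$ other than $P'$ form a single free $\bar E$-orbit, and correspondingly their preimages in $\xx$ form a single free $E$-orbit. This yields exactly two totally ramified branch points for $\psi$, and the tame Riemann--Hurwitz formula gives $g(\xx/E)=0$ in one line. Your approach is shorter and conceptually simpler; the paper's approach has the side benefit of producing the explicit value of the higher ramification filtration sum at $P_1$, though this extra information is not actually used in the sequel.
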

\begin{proof}
By Proposition \ref{2n=p^r+1}, there exists $W<\aut_{\kk}(\xx)$ such that $W/C_m \cong \PSL(2,p^r)$. Moreover, since $n=\frac{p^r+1}{2}$, one can check that $C_n<W$. From \cite[Theorem 1]{VM}, $\PSL(2,p^r)$ has two short orbits on $\xx/C_m$, a non-tame one, which we denote by $\Gamma_1$, and a tame one, denoted by $\Gamma_2$. Moreover, $\#(\Gamma_1)=p^r+1=2n$ and $\#(\Gamma_2)=p^r(p^r-1)$.  Set $\Lambda_1=\Omega_1 \cup \Omega_2 \subset \xx$. Each point of $\Lambda_1$ is fully ramified in the cover $\xx \rightarrow \xx/C_m$, and the remaining points of $\xx$ have trivial stabilizer in $C_m$. By Lemma \ref{p-group}, there is a point $P_1 \in \Lambda_1$ fixed by $E$, and $E$ acts transitively on $\xx \backslash \{P_1\}$; in particular, $E=W_{P_1}^{(1)}$. Since $C_m < W_{P_1}$ and $p \nmid m$, the point $\pi_2(P_1) \in \xx/C_m$ has a non-tame stabilizer. Thus $\pi_2(P_1) \in \Gamma_1$. By the normality of $C_m$, $\PSL(2,p^r)$ acts on the points of $\xx/C_m$ as $W$ does on the set of orbits of $\xx$. Hence, $\Lambda_1$ is a short orbit of size $2n$ of $W$. Now let $Q_1 \in \Omega_3 \cup \Omega_4$. Since $C_n$ fixes $\Omega_3 \cup \Omega_4$ pointwise, we have that $\pi_2(\Omega_3) \in \xx/C_m$ has nontrivial stabilizer. Since $\pi_2(\Omega_3) \notin \Gamma_1$, we have that $\pi_2(\Omega_3) \in \Gamma_2$, and so its stabilizer on $\PSL(2,p^r)$ is tame, and by \cite[Theorem 1]{VM} it has size $n$. The same holds for $\pi_2(\Omega_4)$. Therefore, from the fact that every point of $\xx$ outside $\Lambda_1$ has trivial stabilizer on $C_m$, we see that $W$ has another short orbit $\Lambda_2$ (containing $\Omega_3 \cup \Omega_4$) on $\xx$ of size $mp^r(p^r-1)$, and $\Lambda_1$ and $\Lambda_2$ are the only short orbits of $W$. Thus, recalling that $\xx/C_m$ is rational and $|\PSL(2,p^r)|=p^r(p^r-1)n$, the Riemann-Hurwitz formula (\ref{rhg}) applied to the cover $\xx \rightarrow \xx/W$ gives
$$
2mn-2m-2n= -2mnp^r(p^r-1)+mp^r(p^r-1)(|W_{Q_1}|-1)+2n(|W_{P_1}|-1 )+2n \sum_{i \geq 1}\big(|W_{P_1}^{(i)}|-1  \big),
$$
and so
$$
2mn-2m-2n= -2mnp^r(p^r-1)+mp^r(p^r-1)(n-1)+2n\left(\frac{m(p^r-1)p^r}{2}+p^r-2\right)+2n \sum_{i \geq 2}\big(|W_{P_1}^{(i)}|-1  \big),
$$
which gives 
\begin{equation}\label{equa1}
\sum_{i \geq 2}\big(|W_{P_1}^{(i)}|-1  \big)=(m-1)(p^r-1). 
\end{equation}
Now, denote by $\tilde{g}$ the genus of $\xx/E$. From $\{P_1\}$ being the unique short orbit of $E$, the Riemann-Hurwitz formula (\ref{rhg}) applied to $\xx \rightarrow \xx/E$ provides
$$
2mn-2m-2n=2p^r(\tilde{g}-1)+2p^r-2+ \sum_{i \geq 2}\big(|E_{P_1}^{(i)}|-1  \big).
$$
Hence
\begin{equation}\label{equa2}
\tilde{g}=\frac{(m-1)(p^r-1)-\sum_{i \geq 2}\big(|E_{P_1}^{(i)}|-1  \big)}{2p^r}.
\end{equation}

The result now follows from (\ref{equa1}), (\ref{equa2}) and from the equality $E=W_{P_1}^{(1)}$.
\end{proof}

Let $x,y \in \kk(\xx)$ such that $\kk(x,y)= \kk(\xx)$ and $ax^ny^m+x^n+y^m=1$.  If $\aut_{\kk}(\xx)$ contains a Sylow $p$-subgroup $E$ of order $p^r$, then $\xx/E$ is rational and $\xx$ has a point $P_1$ that is the unique point fixed by $E$. Moreover,  $P_1 \in \Omega_1 \cup \Omega_2$, and so there is  $\lambda \in \kk^{*}$ such that $\div((x-\lambda)^{-1})_{\infty}=mP_1$. Furthermore, thanks to the proof of Theorem \ref{pgrouprat}, we know that $\aut_{\kk}(\xx)$ has no fixed points. Finally, $2n=p^r+1$ and $g=(n-1)(m-1)$. Therefore, via \cite[Theorem 12.4 and Theorem 12.11]{HKT}, we have the following.

\begin{prop}\label{sep}
Assume that $m<n$ and that $\aut_{\kk}(\xx)$ has a Sylow $p$-subgroup $E$ of order $p^r$, with $r>0$. Then:
\begin{itemize}
\item $\kk(\xx)=\kk(z,w)$, where $z^{p^r}+z=w^m$, with $p^r \equiv -1 (\mod m)$ and $P_1$ is the common pole of $z$ and $w$;
\item $\aut_{\kk}(\xx)/C_m \cong \PGL(2,p^r)$;
\item $C_m$ fixes each of the $p^r+1$ points with the same Weierstrass semigroup as $P_1$;
\item $\aut_{\kk}(\xx)/C_m$ acts on the set of such $p^r+1$ points as $\PGL(2,p^r)$.
\end{itemize}
\end{prop}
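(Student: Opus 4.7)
The plan is to use the ramification data already assembled, together with the structure theorems \cite[Theorems 12.4 and 12.11]{HKT}, to extract the explicit function-field model $z^{p^r}+z=w^m$, and then to read off the remaining assertions from the (well-understood) automorphism theory of this curve.

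First I would verify that $E$ is elementary abelian of order $p^r$: by Proposition \ref{2n=p^r+1} the group $E$ maps isomorphically onto a Sylow $p$-subgroup of $\PSL(2,p^r)\le\aut_\kk(\xx)/C_m$ (recall $p\nmid m$, so $E\cap C_m=\{1\}$), and such Sylow subgroups are elementary abelian. Proposition \ref{pgrouprat} together with Lemma \ref{p-group} gives that $\xx/E$ is rational, that $P_1$ is the unique fixed point of $E$, and that $E=E_{P_1}^{(1)}$ acts freely on $\xx\setminus\{P_1\}$; moreover, the identity $\sum_{i\ge 2}(|E_{P_1}^{(i)}|-1)=(m-1)(p^r-1)$ extracted in the proof of Proposition \ref{pgrouprat} pins down the higher ramification filtration at $P_1$. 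Writing $\kk(\xx/E)=\kk(w)$ with $w$ having a pole at the image of $P_1$, the extension $\kk(\xx)\mid\kk(w)$ is elementary abelian of degree $p^r$ and totally ramified at exactly one place. Invoking \cite[Theorem 12.4]{HKT} with these data, I obtain $z\in\kk(\xx)$ such that $\kk(\xx)=\kk(z,w)$, the pole order of $z$ at $P_1$ equals $m$, and $z^{p^r}+z=F(w)$ for some polynomial $F\in\kk[w]$ of degree $m$.

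To force $F(w)=w^m$ I would exploit the normality of $C_m$ in $\aut_\kk(\xx)$: it acts on $\xx/E$ as a cyclic subgroup of $\aut_\kk(\xx/E)\cong\PGL(2,\kk)$ of order $m$ fixing the image of $P_1$, hence, after normalising $w$ so that this image is $w=\infty$, the induced action takes the form $w\mapsto\zeta_m w$, and compatibility with $z^{p^r}+z=F(w)$ forces $F$ to be a scalar multiple of $w^m$. A final rescaling of $z$ and $w$ absorbs the scalar, while the congruence $p^r\equiv -1\pmod m$ records exactly $m\mid p^r+1=2n$, a divisibility already contained in Proposition \ref{2n=p^r+1}. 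Appealing to \cite[Theorem 12.11]{HKT} then identifies $\kk(\xx)$ with the function field of $z^{p^r}+z=w^m$ and $P_1$ with the common pole of $z$ and $w$, yielding the first bullet.

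Working on this model, the remaining bullets follow from standard facts about generalised Artin--Schreier curves. The map $w\mapsto w^{-1}$, accompanied by a suitable change of $z$, lifts to an involution of $\xx$ which, combined with the $\PSL(2,p^r)$ already present, generates $\PGL(2,p^r)$ in $\aut_\kk(\xx)/C_m$, giving the second bullet. The Weierstrass semigroup at $P_1$ is generated by the pole orders $m$ and $p^r$ of $z$ and $w$; since this semigroup is an automorphism invariant and $\PGL(2,p^r)$ acts on the preimage of $\Gamma_1\subset\xx/C_m$ as on $\P^1(\F_{p^r})$, all $p^r+1$ such points share the semigroup $\langle m,p^r\rangle$ and are fixed by $C_m$, while Lemma \ref{p-group} (uniqueness of the fixed point of any Sylow $p$-subgroup) rules out any other candidate. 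The main obstacle is the middle step of pinning down $F(w)=w^m$ rather than a more general polynomial of degree $m$: once the $C_m$-normalisation of $w$ is correctly fixed this monomiality is forced, but aligning the $C_m$-action, the normalisation at the completely ramified place and the ramification data at $P_1$ is the delicate part of the argument.
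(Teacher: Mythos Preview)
Your proposal is correct and follows the same route as the paper: assemble the ramification and genus data from Lemma~\ref{p-group} and Propositions~\ref{2n=p^r+1}, \ref{pgrouprat}, then invoke \cite[Theorems 12.4 and 12.11]{HKT}. The paper's ``proof'' is in fact just the paragraph preceding the proposition, which lists the ingredients ($\xx/E$ rational, unique fixed point $P_1\in\Omega_1\cup\Omega_2$, existence of a function with pole divisor $mP_1$, no global fixed point of $\aut_\kk(\xx)$, $2n=p^r+1$, $g=(n-1)(m-1)$) and then cites the two HKT theorems directly; your explicit construction of $w$ and the normalisation argument forcing $F(w)=w^m$ via the induced $C_m$-action on $\xx/E$ is detail that the paper absorbs into that citation.

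Two small remarks. First, the paper explicitly records that $\aut_\kk(\xx)$ has no fixed point, a hypothesis needed for \cite[Theorem 12.11]{HKT}; you do not mention this, though it is easy to extract from the orbit analysis in the proof of Proposition~\ref{pgrouprat}. Second, your claim that the congruence $p^r\equiv -1\pmod m$ (i.e.\ $m\mid p^r+1$) is ``already contained in Proposition~\ref{2n=p^r+1}'' is not quite right: that proposition gives only $2n=p^r+1$, and nothing so far forces $m\mid 2n$. This divisibility is rather part of the output of the HKT structure theorem (and, as Theorem~\ref{mainpgroup} subsequently shows, in fact $m=2$).
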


\begin{thm}\label{mainpgroup}
Assume that $m<n$ and that $\aut_{\kk}(\xx)$ has a Sylow $p$-subgroup of order $p^r$. Then $\aut_{\kk}(\xx)/C_m \cong \PGL(2,p^r)$, $a=1$, $n=\frac{p^r+1}{2}$ and $m=2$.
\end{thm}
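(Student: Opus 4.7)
The plan is as follows. Two of the four conclusions are already established in earlier propositions: $n = (p^r+1)/2$ comes from Proposition~\ref{2n=p^r+1}, and the isomorphism $\aut_\kk(\xx)/C_m \cong \PGL(2,p^r)$ from Proposition~\ref{sep}, which also supplies the model $\kk(\xx) = \kk(z,w)$ with $z^{p^r}+z=w^m$. So only $m = 2$ and $a = 1$ need further work.

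The crux is to analyse a generator $w$ of the stabilizer, in $W := \aut_\kk(\xx)$, of a fixed $Q \in \Omega_3$. The $W$-orbit $\Lambda_2$ containing $\Omega_3 \cup \Omega_4$ has size $mp^r(p^r-1)$, so $|W_Q| = p^r+1 = 2n$; since $C_m$ fixes only $\Lambda_1$, $W_Q \cap C_m = \{1\}$, so $W_Q$ injects onto the non-split torus of $\PGL(2,p^r)$ fixing $\pi_2(Q)$, is cyclic, and a generator $w$ has order $2n$ with $w^2$ generating $C_n$. I will first verify that $C_m$ is central in $W$ by direct calculation in the $(z,w)$-model: identifying $C_m = \{(z,w)\mapsto(z,\zeta w) : \zeta^m=1\}$ and lifting the generators of $\PGL(2,p^r)$ on $\xx/C_m \cong \p^1_z$ (translations $z\mapsto z+c$ with $c^{p^r}+c=0$, scalings $z\mapsto az$ with $a\in\F_{p^r}^*$, and the inversion $z\mapsto 1/z$) to automorphisms of $\xx$, each of which manifestly commutes with this $C_m$.

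Centrality yields $w(c(Q)) = c(w(Q)) = c(Q)$ for every $c \in C_m$, so $w$ fixes $\Omega_3$ pointwise; and $w$ acts on $\Omega_4$ as a $C_m$-equivariant bijection, that is, as translation by some $c_w \in C_m$, with $c_w^2 = 1$ since $w^2 \in C_n$ fixes $\Omega_4$ pointwise. The Riemann-Hurwitz formula applied to $\xx \to \xx/\langle w\rangle$ (using $g = (m-1)(n-1)$, $|\langle w\rangle| = 2n$, and the fact that $\langle w\rangle$ acts freely on $\Lambda_1$ because $\langle w\rangle \cap C_m = \{1\}$ while the non-split torus meets any Borel trivially) then splits into two cases. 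If $c_w = 1$ then $\fix(w) = \Omega_3 \cup \Omega_4$ with ramification index $2n$ throughout, giving $2n(1-m) = 4n\bar g + S$ with $\bar g, S \geq 0$, hence $m \leq 1$, impossible. If $c_w \neq 1$ then $w$ acts fixed-point-freely on $\Omega_4$, so $\fix(w) = \Omega_3$ with ramification index $2n$, while each point of $\Omega_4$ contributes $e_P = n$, giving $n(2-m) = 4n\bar g + S$ and hence $m \leq 2$. Combined with $m > 1$, this forces $m = 2$.

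For $a = 1$, I take $\bar\eta$ in the non-split torus $\bar W_Q$ but outside its index-$2$ subgroup $\bar C_n$; then $\bar\eta$ interchanges the two $\bar C_n$-orbits $\pi_2(\Omega_1)$ and $\pi_2(\Omega_2)$ on $\pi_2(\Lambda_1)$ while fixing both $\pi_2(\Omega_3)$ and $\pi_2(\Omega_4)$. Any lift $\eta \in W$ therefore interchanges the zero set and the pole set of $y$ while preserving the zero and pole sets of $x$, and Lemma~\ref{interchange} gives $a=1$. The hard part will be the centrality of $C_m$ in $W$, which hinges on the explicit lifts described above; a subsidiary point, namely that the non-split torus fixing $\pi_2(\Omega_3)$ also fixes $\pi_2(\Omega_4)$, follows from $\pi_2(\Omega_3)$ and $\pi_2(\Omega_4)$ being the two elements of the unique Galois-conjugate pair stabilized by $\bar W_Q$.
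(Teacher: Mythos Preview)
Your proof is correct and reaches all four conclusions, but your route to $m=2$ differs from the paper's. Both arguments begin by invoking Propositions~\ref{2n=p^r+1} and~\ref{sep} for $n=(p^r+1)/2$ and $\aut_\kk(\xx)/C_m\cong\PGL(2,p^r)$, and both derive $a=1$ via Lemma~\ref{interchange} using an element of the cyclic stabilizer $W_Q$ of order $2n$ at $Q\in\Omega_3$. The divergence is in how $m=2$ is extracted. You prove directly that $C_m$ is central in $\aut_\kk(\xx)$ by exhibiting lifts of the standard generators of $\PGL(2,p^r)$ in the model $z^{p^r}+z=w^m$ (translations, scalings, and $z\mapsto 1/z$, each visibly commuting with $(z,w)\mapsto(z,\zeta w)$), and then run Riemann--Hurwitz for the tame cyclic quotient $\xx\to\xx/\langle w\rangle$; the two cases $c_w=1$ and $c_w\neq 1$ force $m\le 1$ and $m\le 2$ respectively. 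The paper instead passes to a second plane model $v^m=u^{p^r+1}+1$, identifies the group $K_m$ acting as $(u,v)\mapsto(u,\zeta_2 v)$ with $C_m$ via Proposition~\ref{konto1}, argues that $\sigma_2$ therefore commutes with (a conjugate of) $\tau_1$, and then reads off $\zeta_2^2=1$ from the explicit formula $\tau_1\sigma_2(y)=\zeta_2^2/y$ versus $\sigma_2\tau_1(y)=1/y$. Your approach is somewhat more transparent, since the centrality is established once and the Riemann--Hurwitz bookkeeping is routine; the paper's approach avoids the case split but requires tracking two coordinate systems and the conjugacy of the order-$2n$ cyclic subgroups. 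One small point: your justification that $\bar W_Q$ fixes $\pi_2(\Omega_4)$ is cleaner if stated as ``$\bar C_n\subset\bar W_Q$ already fixes the pair $\{\pi_2(\Omega_3),\pi_2(\Omega_4)\}$, and all nontrivial elements of a cyclic subgroup of $\PGL(2,\kk)$ share the same two fixed points,'' rather than via Galois conjugacy.
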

\begin{proof}
The first statement follows from Proposition \ref{sep}. We keep the notation of the proof of Proposition \ref{pgrouprat}. Arguing as in the proof of such Proposition, we see that $\aut_{\kk}(\xx)$ has only two short orbits: $\Lambda_1=\Omega_1 \cup \Omega_2$, that has size $2n$, and $\Lambda_2$, of size $mp^r(p^r-1)$. Let $Q_1 \in \Omega_3 \cup \Omega_4 \subset \Lambda_2$. From Proposition \ref{sep}, we obtain that the stabilizer $(\aut_{\kk}(\xx))_{Q_1}$ of $Q_1$ is a cyclic group of order $2n$ that acts transitively on $\Lambda_1$. Thus Lemma \ref{interchange} implies that $a=1$. Moreover, $(\aut_{\kk}(\xx))_{Q_1}= \langle \tau_1 \rangle$. The group $\langle \tau_1 \rangle$ is, up to conjugacy, the only cyclic group of order $2n$ fixing a point of $\xx$. This happens because no such group is in the stabilizer $(\aut_{\kk}(\xx))_P$ of some point $P \in \Lambda_1$, since $|(\aut_{\kk}(\xx))_P|=mp^r(p^r-1)=2m(2n-1)(n-1)$. Hence every cyclic group of order $2n$ fixing a point is the stabilizer of some point $Q \in \Lambda_2$, whence conjugated to $\langle \tau_1 \rangle$. Furthermore, $C_m$ is the only cyclic normal subgroup of $\aut_{\kk}(\xx)$ of order $\geq m$. Indeed, the existence of a cyclic normal subgroup $T \neq C_m$ of $\aut_{\kk}(\xx)$ would imply the existence of a cyclic normal subgroup $\bar{T}$ of $\aut_{\kk}(\xx)/C_m \cong \PGL(2,p^r)$, a contradiction.

Now let $\varepsilon, \kappa \in \kk$ such that $\varepsilon^{p^r+1}=-1$ and $\kappa^{p^r}+\kappa=1$. Then $\kk(u,v)=\kk(z,w)$, where $u=\frac{\varepsilon}{z-\kappa}+\varepsilon$ and $v=w\left(\frac{\varepsilon}{z-\kappa}\right)^{\frac{p^r+1}{m}}$. Note that $v^{m}=u^{p^r+1}+1$.
In other words, $\xx$ is birationally equivalent to the curve $\cc$ defined by the affine equation $X_1^{2n}+X_2^{m}=1$. Denote by $K_{2n}$ the group generated by the automorphism $\phi_1:(u,v) \mapsto (c_1u,v)$ and by $K_m$ the group generated by $\phi_2:(u,v) \mapsto (u, \zeta_2 v)$. It should be noted that $\phi_1$ and $\phi_2$ commute. The group $K_m$ fixes $2n$ points of $\xx$ (this was shown in the proof of Lemma \ref{so t3}, since $K_{2n} \times K_m$ has three short orbits on $\xx$). Thus $K_m$ is a normal subgroup of $\aut_{\kk}(\xx)$, by Theorem \ref{konto1}. Therefore, $K_m=C_m$. On its turn, $K_{2n}$ also has fixed points on $\xx$, by the same reasons that $K_m$ does. Hence $K_{2n}=\sigma C_{2n} \sigma^{-1}$ for some $\sigma \in \aut_{\kk}(\xx)$. But $C_m$ is a normal subgroup of $\aut_{\kk}(\xx)$ and $\sigma_2$ commutes with $\phi_1$. Thus $\sigma_2$ commutes with $\tau_1$. In particular, $\tau_1 \sigma_2 (y)=\frac{\zeta_2^2}{y}$ and $\sigma_2 \tau_1(y)= \frac{1}{y}$. Therefore $\zeta_2^2=1$, which means that $m=2$. This finishes the proof. 
\end{proof}

\begin{thm}
Let $\xx$ be a nonsingular model of the curve defined over the algebraic closure $\kk$ of $\fq$ by the equation $aX^nY^m+X^n+Y^m=1$, where $m<n$ and $p \nmid mn$. Assume that $n \neq 4$. Then $\aut_{\kk}(\xx)$ has a normal cyclic subgroup $C_m$ of order $m$, and
\begin{equation} 
\aut_{\kk}(\xx)/C_m \cong \begin{cases}
D_{n}, \ \text{if $a \neq 1$; }\\
D_{2n}, \ \text{if $a=1$ and $(m,n) \neq \left(2,\frac{p^r+1}{2}\right)$  for all $r>0$; }\\
\PGL(2,p^r) \ \text{if $a=1$ and $(m,n) = \left(2,\frac{p^r+1}{2}\right)$ for some $r>0$.}
\end{cases}
\end{equation}
\end{thm}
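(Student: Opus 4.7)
The plan is to assemble the previously established results through a single case split on the divisibility of $|\aut_{\kk}(\xx)|$ by $p$. First, I would invoke Lemma \ref{cmnormal} to conclude that $C_m$ is a normal cyclic subgroup of $\aut_{\kk}(\xx)$, so the quotient $\aut_{\kk}(\xx)/C_m$ is well-defined and, since $\xx/C_m$ is rational, embeds into $\PGL(2,\kk)$.

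Next, I would perform the dichotomy. In the first case, where $\aut_{\kk}(\xx)$ has no nontrivial $p$-subgroup, Theorem \ref{fullpar} applies directly---its hypotheses are precisely $m<n$, $n \neq 4$, and the absence of $p$-subgroups---and yields $\aut_{\kk}(\xx)/C_m \cong D_n$ when $a \neq 1$ and $\aut_{\kk}(\xx)/C_m \cong D_{2n}$ when $a=1$. In the second case, $\aut_{\kk}(\xx)$ contains a Sylow $p$-subgroup of order $p^r > 1$; Theorem \ref{mainpgroup} then forces $a=1$, $m=2$, $n=(p^r+1)/2$, and $\aut_{\kk}(\xx)/C_m \cong \PGL(2,p^r)$.

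The last step is to match this dichotomy with the three alternatives of the statement. If $a \neq 1$, then Theorem \ref{mainpgroup} forbids the $p$-subgroup branch, so we necessarily land in $D_n$. If $a=1$ but $(m,n) \neq (2,(p^r+1)/2)$ for every $r>0$, then again Theorem \ref{mainpgroup} forbids the $p$-subgroup branch and we obtain $D_{2n}$. Finally, if $a=1$ and $(m,n)=(2,(p^r+1)/2)$ for some $r > 0$, Lemma \ref{hyper} gives $\aut_{\kk}(\xx)/C_2 \cong \PGL(2,p^r)$; since $p$ divides $|\PGL(2,p^r)|$, the group $\aut_{\kk}(\xx)$ must contain a nontrivial $p$-subgroup, placing us in the second branch of the dichotomy and yielding $\PGL(2,p^r)$.

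The hard work for this theorem has already been absorbed into Theorems \ref{fullpar} and \ref{mainpgroup}; what remains is essentially bookkeeping. The only subtlety I anticipate is verifying that the three alternatives in the statement are mutually exclusive and jointly exhaustive, and in particular that the case $a=1$, $(m,n)=(2,(p^r+1)/2)$ is correctly identified with the $p$-subgroup branch rather than also potentially producing $D_{2n}$. This is resolved by Lemma \ref{hyper}, together with the observation that $D_{2n} = D_{p^r+1}$ embeds in $\PGL(2,p^r)$, so there is no genuine ambiguity between the two outputs in that overlap.
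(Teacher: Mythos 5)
Your proposal is correct and takes essentially the same approach as the paper, whose proof is the one-line citation of Theorems \ref{fullpar} and \ref{mainpgroup}. Your explicit dichotomy on the existence of a $p$-subgroup, and the use of Lemma \ref{hyper} to certify that the case $a=1$, $(m,n)=\left(2,\frac{p^r+1}{2}\right)$ genuinely falls into the $p$-subgroup branch, is exactly the bookkeeping the paper leaves implicit.
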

\begin{proof}
This follows from Theorems \ref{fullpar} and \ref{mainpgroup}.
\end{proof}

\begin{rem}
Let  $ H =  \aut_{\kk}(\xx)/C_m$ and $s =  |\aut_{\kk}(\xx)/C_m|$ . If $\gcd(m,s) = 1$, then  $\aut_{\kk}(\xx) \cong C_m \rtimes H$ by the Schur-Zassenhaus Theorem. If $\gcd(m,s) \neq 1$, it might happen that $C_m$ has no complement in $\aut_{\kk}(\xx)$.
\end{rem}

\subsection{The case $n = m$}

If $n = m$, a different approach for the determination of the full automorphism group of $\xx$ is needed. Henceforth, $\xx$ is a nonsingular model of $\ff:aX^mY^m+X^m+Y^m=1$, where $a \in \kk^{*}$.

\begin{lem}
The set $\Omega = \Omega_1 \cup \Omega_2 \cup \Omega_3 \cup \Omega_4$ is an $\aut_{\kk}(\xx)$-short orbit of size $4m
$.
\end{lem}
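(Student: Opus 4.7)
The plan is to establish two parts: the size $|\Omega|=4m$ (immediate from $\Omega_1,\Omega_2,\Omega_3,\Omega_4$ being pairwise disjoint $G$-orbits of size $m$ each) and that the $\aut_\kk(\xx)$-orbit of any point of $\Omega$ coincides with $\Omega$. Every $P\in\Omega$ has nontrivial $G$-stabilizer of order $m$, so any $\aut_\kk(\xx)$-orbit through $P$ is automatically short.

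For the containment of $\Omega$ in a single $\aut_\kk(\xx)$-orbit, my approach is to construct an explicit subgroup $\Gamma\leq\aut_\kk(\xx)$ acting transitively on $\Omega$. Lemma \ref{orbdih} already gives that $\langle G,\mu\rangle$ breaks $\Omega$ into just the two orbits $\Omega_1\cup\Omega_2$ and $\Omega_3\cup\Omega_4$. The crucial new ingredient, available only because $m=n$, is the involution $\theta\colon(x,y)\mapsto(y,x)$. Using the divisor description $\div(y)=\sum_{P\in\Omega_1}P-\sum_{P\in\Omega_2}P$ and $\div(x)=\sum_{P\in\Omega_3}P-\sum_{P\in\Omega_4}P$ recalled in the proof of Lemma \ref{orbdih}, the involution $\theta$ interchanges the zero and polar divisors of $x$ and $y$, giving $\theta(\Omega_1)=\Omega_3$ and $\theta(\Omega_2)=\Omega_4$. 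Hence $\Gamma=\langle G,\mu,\theta\rangle$ acts transitively on $\Omega$ and all of $\Omega$ lies in a single $\aut_\kk(\xx)$-orbit.

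The main obstacle is to rule out that the $\aut_\kk(\xx)$-orbit exceeds $\Omega$. My plan is to characterize $\Omega$ intrinsically, as the union of fixed-point sets of all cyclic subgroups $C\leq \aut_\kk(\xx)$ of order $m$ prime to $p$ achieving the maximal possible number of fixed points. Applying the Riemann--Hurwitz formula (\ref{rhg}) to the tame cover $\xx\to\xx/C$ and using $g=(m-1)^2$ from Proposition \ref{genus}, a short computation gives $|\fix(C)|\le 2m$, with equality forcing $\xx/C\cong \mathbb{P}^1$. Both $\langle\sigma_1\rangle$ and $\langle\sigma_2\rangle$ attain this bound, with fixed-point sets $\Omega_3\cup\Omega_4$ and $\Omega_1\cup\Omega_2$ respectively.

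The technical core is then to show that every such maximal-fix cyclic $C$ satisfies $\fix(C)\subseteq \Omega$; I expect to handle this by analyzing the composite covers $\xx\to\xx/\langle C,\sigma_i\rangle$ via Abhyankar's lemma, in the same spirit as the proof of Proposition \ref{genus}. The orbit structure should force $C$ to be a $\Gamma$-conjugate of either $\langle\sigma_1\rangle$ or $\langle\sigma_2\rangle$, so that $\fix(C)$ is a $\Gamma$-translate of $\Omega_1\cup\Omega_2$ or $\Omega_3\cup\Omega_4$ and hence sits inside $\Omega$. The union of $\fix(C)$ over all such $C$ then equals $\Omega$, and is manifestly $\aut_\kk(\xx)$-invariant, which combined with the transitivity of $\Gamma$ on $\Omega$ yields that $\Omega$ is a single short $\aut_\kk(\xx)$-orbit of size $4m$.
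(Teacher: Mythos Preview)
Your transitivity argument via $\Gamma=\langle G,\mu,\theta\rangle$ is correct and matches the paper's first step exactly. The problem lies in your proposed proof that $\aut_\kk(\xx)$ cannot carry $\Omega$ outside itself.

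Your plan is to characterize $\Omega$ as $\bigcup_C \fix(C)$, the union over all cyclic $C\leq\aut_\kk(\xx)$ of order $m$ with $|\fix(C)|=2m$. This set is certainly $\aut_\kk(\xx)$-invariant under conjugation, and the inclusion $\Omega\subseteq\bigcup_C\fix(C)$ is clear. But the reverse inclusion --- that every such $C$ satisfies $\fix(C)\subseteq\Omega$ --- is where the argument becomes circular. You hope to show that any maximal-fix $C$ is $\Gamma$-conjugate to $\langle\sigma_1\rangle$ or $\langle\sigma_2\rangle$, yet nothing you have established rules out a cyclic subgroup of order $m$ sitting in $\aut_\kk(\xx)\setminus\Gamma$ with $2m$ fixed points lying partly or wholly outside $\Omega$. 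Your proposed tool, Abhyankar's lemma applied to $\xx\to\xx/\langle C,\sigma_i\rangle$, presupposes that $C$ and $\sigma_i$ commute (or at least generate a tractable group), which you cannot control without already knowing the structure of $\aut_\kk(\xx)$ --- precisely what this lemma is a stepping stone toward.

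The paper sidesteps this circularity by using an invariant that is intrinsic to individual points rather than to subgroups: it shows that $m$ is a non-gap at every $P\in\Omega$ (immediate, since one of $x,x^{-1},y,y^{-1}$ has a pole of order exactly $m$ at $P$), and that $m$ is a gap at every point outside $\Omega$ (by exhibiting an explicit canonical adjoint with intersection multiplicity $m-1$ at the corresponding point of $\ff$). Since Weierstrass semigroups are preserved by automorphisms, the set $\{P\in\xx:m\text{ is a non-gap at }P\}=\Omega$ is automatically $\aut_\kk(\xx)$-stable, and combined with the transitivity of $\Gamma$ this gives the result directly.
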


\begin{proof}
First, note that $\Omega$ is a unique orbit under the action of the group generated by $C_m\times C_m$, $\theta$ and $\mu$ since  $\theta(\Omega_1) = \Omega_3$ and $\theta(\Omega_2) = \Omega_4$ while $\mu(\Omega_1) = \Omega_2$ and $\mu(\Omega_3) = \Omega_4$. Also, it is immediately seen that $m$ is a non-gap at any point lying on $\Omega$. Next, we show that $m$ is a gap number  at any point  $O \not\in \Omega$, is centered at $U= (b:c:1) \in \ff$ with $ b, c \neq 0$. Let $\ell$ be the tangent line to $\ff$  at $U$. It can be straightforwardly checked that $U$ is not an inflection point of $\ff$, from where the intersection multiplicity $I( U, \ff \cap \ell)= 2$. Then the curve $\mathcal{C}$ having the vertical line $X- a$ counted $m-3$ times, the line $Z=0$ counted $m-1$ times and $\ell$ as components is a canonical adjoint for $\xx$ such that  $I( U, \ff \cap \mathcal{C})= m-1$. This implies that $m$ is a gap number at $O$. 
\end{proof}

\begin{lem}\label{linearseries}
Let $\Omega_1 = \{ R_1,\ldots, R_m\}, \Omega_2 = \{ S_1,\ldots, S_m\}, \Omega_3 = \{ P_1,\ldots, P_m\}, \Omega_4 = \{ Q_1,\ldots, Q_m\}$. For $i,j,h,k$ such that $i+j+h+k=m$, consider the divisor
$$
D:=P_1+\cdots +P_i + Q_1+\cdots +Q_j + R_1+ \cdots + R_h + S_1+\cdots +S_k.
$$
Then the linear series $|D|$ has projective dimension 
$$
\dim |D| =  \begin{cases}
1, \ \text{ if $l=m$ for some $ l \in \{i,j, h,k\}$;}\\
0, \ \text{ otherwise}.
\end{cases}
$$ 
\end{lem}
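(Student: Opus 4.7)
The plan is to compute $\dim|D|=\ell(D)-1$ via Riemann--Roch, exhibiting an explicit basis of the space of regular differentials on $\xx$ and translating the vanishing conditions defining $L(K-D)$ into four blocks of Vandermonde equations.

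By Proposition~\ref{genus}, $\xx$ has genus $g=(m-1)^2$, while $\deg D=i+j+h+k=m$; Riemann--Roch therefore gives
\[
\ell(D)-\ell(K-D)=m-(m-1)^2+1=3m-m^{2},
\]
so it suffices to compute $\ell(K-D)$. Using the Kummer presentation $y^{m}=(1-x^{m})/(1+ax^{m})$ and the divisors of $x,y,1-x^{m},1+ax^{m}$ (read off from Lemma~\ref{orbdih}), one checks that the $(m-1)^{2}=g$ differentials
\[
\omega_{\mu,\nu}=\frac{x^{\mu}y^{\nu}\,dx}{1-x^{m}},\qquad 0\le\mu\le m-2,\ 1\le\nu\le m-1,
\]
are all regular, hence form a basis of $L(K)$.

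A direct local expansion at the four orbits yields
\[
v_{P_{s}}(\omega_{\mu,\nu})=\mu,\quad v_{Q_{t}}(\omega_{\mu,\nu})=m-2-\mu,\quad v_{R_{r}}(\omega_{\mu,\nu})=\nu-1,\quad v_{S_{t'}}(\omega_{\mu,\nu})=m-1-\nu.
\]
Consequently, for $\omega=\sum\alpha_{\mu,\nu}\omega_{\mu,\nu}$, the condition $v_{X}(\omega)\ge 1$ at a point of $\Omega_{3}$ (resp.\ $\Omega_{4},\Omega_{1},\Omega_{2}$) is a single linear equation in the row $\alpha_{0,\ast}$ (resp.\ $\alpha_{m-2,\ast}$, the column $\alpha_{\ast,1}$, or $\alpha_{\ast,m-1}$) of the $(m-1)\times(m-1)$ matrix $(\alpha_{\mu,\nu})$, of Vandermonde type in the data $\zeta_{2}^{s}$, $c_{2}\zeta_{2}^{t}$, $\zeta_{1}^{r}$, or the roots of $1+ax^{m}$, respectively.

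Since $i+j+h+k=m$, the rank count splits into two cases. If some $l\in\{i,j,h,k\}$ equals $m$, without loss of generality $i=m$, the remaining orbit indices vanish and we obtain $m$ Vandermonde equations on the $m-1$ unknowns $\alpha_{0,1},\dots,\alpha_{0,m-1}$, of rank exactly $m-1$; hence $\ell(K-D)=g-(m-1)=(m-1)(m-2)$ and $\ell(D)=2$. If instead all $l\le m-1$, the four orbit blocks live on disjoint rows and columns, coupled only through the four corner variables $\alpha_{0,1},\alpha_{0,m-1},\alpha_{m-2,1},\alpha_{m-2,m-1}$; evaluating any putative linear dependence first on the interior variables of each boundary line (e.g.\ $\alpha_{0,\nu}$ with $\nu\notin\{1,m-1\}$) and then propagating through the corners collapses each block to an independent Vandermonde system on its own coefficients, forcing all of them to vanish. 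The combined rank is therefore exactly $i+j+h+k=m$, giving $\ell(K-D)=g-m$ and $\ell(D)=1$.

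The main obstacle is this independence step in the second case: for $m\ge 4$ the chain argument works systematically, using that $\sum l=m$ together with $l\le m-1$ forces at most one orbit to have ``large'' $l$, so the other three blocks are killed purely by the interior equations before the corners are brought to bear on the remaining one. For $m=3$ each boundary line has no interior and one must verify by hand (using the generic non-vanishing of the relevant $2\times 2$ Vandermonde minors in $\zeta_1,\zeta_2,c_2,a$) that no admissible $(i,j,h,k)$ with all $l<3$ produces a rank drop. Combining the two cases gives $\dim|D|=\ell(D)-1$ equal to $1$ precisely when some $l=m$, and to $0$ otherwise.
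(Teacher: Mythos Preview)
Your approach is genuinely different from the paper's. The paper works directly with the plane model $\ff$ and computes $|D|$ via low-degree adjoint curves: for $D=P_1+\cdots+P_m$ it identifies the pencil of degree-$m$ adjoints $a_1XZ^{m-1}+a_2Z^m$, and in the mixed case it shows that the relevant linear system of degree-$(m+1)$ adjoints reduces to a single curve. You instead go through Riemann--Roch, build the explicit basis $\omega_{\mu,\nu}=x^\mu y^\nu\,dx/(1-x^m)$ of $L(K)$, and turn $\ell(K-D)$ into a rank computation. Your construction of the basis and the valuation table at the four orbits are correct, and the first case ($i=m$) is handled cleanly: the $m$ Vandermonde rows on $m-1$ unknowns have rank exactly $m-1$, giving $\ell(D)=2$.

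The gap is in the second case. Your ``interior then corners'' sketch is the right idea but is not yet a proof. Looking only at the interior variables of a boundary line gives $m-3$ conditions on the corresponding block of dependence coefficients, which kills that block only when its size is at most $m-3$. Your remark that ``at most one orbit has large $l$'' is literally true only for $m\ge 5$; for $m=4$ one can have, for instance, $i=j=2$ or $i=h=2$, and neither block is killed by its single interior equation. You flag $m=3$ for a separate check but not $m=4$, and even for $m\ge 5$ the step ``propagating through the corners'' is asserted rather than argued.

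A cleaner way to close the gap, staying inside your framework: first observe that the $P$- and $Q$-blocks act on disjoint rows (row $0$ and row $m-2$), so together they have rank exactly $i+j$; likewise the $R$- and $S$-blocks act on disjoint columns and have rank $h+k$. It therefore suffices to show that no nonzero functional in the row-span equals one in the column-span. Any such common functional is supported on the four corner variables $\alpha_{0,1},\alpha_{0,m-1},\alpha_{m-2,1},\alpha_{m-2,m-1}$. On the row side, restricting to the corners amounts to imposing the $m-3$ interior conditions on $(\lambda_s)$ (and on $(\mu_t)$); on the column side, similarly for $(\rho_r)$ and $(\sigma_{t'})$. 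Because these are genuine Vandermonde systems, each block contributes a corner functional of a very constrained shape (for instance, if $i\le m-2$ the $P$-contribution at the two row-$0$ corners is either zero or has \emph{both} coordinates nonzero). Matching the row and column shapes on all four corners then forces everything to vanish. This still requires writing out a short case analysis, but it makes precise what ``propagating through the corners'' means and handles $m=3,4$ uniformly.
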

\begin{proof}
For the first part of the assertion, without loss of generality we may assume $ i = m $, i.e. $D = P_1+\cdots+ P_m$. Recall that any linear series is cut out on $\ff$ by the adjoints of some degree $s$. Since $\ff$ has only ordinary $m$-fold singularities at $(0:1:0)$ and $(1:0:0)$ , a curve $\mathcal{C}$ of degree $s$ is an adjoint for $\ff$ if and only if $\mathcal{C}$ has at least an $(m-1)$-th fold point at each of these points. The degree $m$ curve $\mathcal{C}: XZ^{m-1} = 0$ is such that the intersection divisor $\mathcal{C} \cdot \ff = P_1+\ldots +P_m + m(S_1+\ldots + S_m) + (m-1)(Q_1+\ldots+ Q_m)$. Hence, the linear series $|D|$ is cut out on $\ff$ by all the curves of degree  $m$  intersecting $\ff$ at least $m$ times in each of the $S_h$'s and at least $(m-1)$ times at each of the $Q_j$'s. Since all such curves are of the type $a_1XZ^{m-1}+a_2Z^m=0$, our assertion follows. For the second part, without loss of generality we may assume that the support of $D$ is contained in $\Omega_1 \cup \Omega_3$. Then 
arguing as in the previous case, it is easily seen that the linear series $|D|$ in this case is cut out on $\ff$ by all the curves of degree $m+1$ passing through $P_{i+1},\ldots,  P_m$, $R_{j+1},\ldots,  R_m$ and at least $m-1$ times at each of the $Q_j$'s and $S_h$'s. Since there is just one curve satisfying such condition, namely $\mathcal{G}: XYZ^{m-1}=0$, our assertion follows. 
\end{proof}

\begin{prop}
$\aut_{\kk}(\xx)$ admits a representation as a permutation group on the set $\{\Omega_1, \Omega_2, \Omega_3, \Omega_4\}$. 
\end{prop}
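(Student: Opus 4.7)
The plan is to combine the two preceding lemmas. The first one tells us that $\Omega := \Omega_1 \cup \Omega_2 \cup \Omega_3 \cup \Omega_4$ is a single $\aut_{\kk}(\xx)$-orbit of size $4m$, so any $\phi \in \aut_{\kk}(\xx)$ permutes the $4m$ points of $\Omega$. Lemma \ref{linearseries} then provides a sharp intrinsic criterion, phrased in terms of the dimension of a linear series, for when a sum of $m$ distinct points of $\Omega$ is concentrated inside one of the $\Omega_\nu$'s. Since $\dim |D|$ is preserved by automorphisms of $\xx$, this criterion descends to $\aut_{\kk}(\xx)$.

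Concretely, I would fix $\phi \in \aut_{\kk}(\xx)$ and one of the short orbits, say $\Omega_3 = \{P_1,\ldots,P_m\}$. Consider the divisor $D := P_1 + \cdots + P_m$; by Lemma \ref{linearseries} (taking $i = m$ and $j = h = k = 0$) we have $\dim |D| = 1$, and since $\phi$ preserves linear equivalence, $\dim |\phi(D)| = 1$ as well. By the preceding lemma, $\phi(D) = \phi(P_1) + \cdots + \phi(P_m)$ is a sum of $m$ pairwise distinct points of $\Omega$, hence a divisor of exactly the form to which Lemma \ref{linearseries} applies. The dichotomy of that lemma then forces the support of $\phi(D)$ to lie inside a single $\Omega_\nu$; since $|\Omega_\nu| = m$, this gives $\phi(\Omega_3) = \Omega_\nu$. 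Repeating the argument for each of the four orbits produces a well-defined action of $\aut_{\kk}(\xx)$ on $\{\Omega_1,\Omega_2,\Omega_3,\Omega_4\}$, which is manifestly a group homomorphism into $S_4$.

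There is no serious obstacle once Lemma \ref{linearseries} is in hand; the only bookkeeping point is that $\phi(D)$ must be a sum of $m$ \emph{distinct} points for the lemma to apply, which is automatic from the bijectivity of $\phi$ on $\Omega$ afforded by the first lemma. If anything, the genuinely technical work has already been absorbed into the linear-series computation, and the present proposition is essentially its corollary.
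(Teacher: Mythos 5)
Your argument is correct and is essentially the paper's own proof: both hinge on the fact that $\dim|D|$ is an automorphism invariant together with the dichotomy of Lemma \ref{linearseries}, which forces the image of $D=P_1+\cdots+P_m$ to be supported in a single $\Omega_\nu$. The only difference is presentational (you argue directly, the paper by contradiction), and your extra remark that $\phi(D)$ consists of $m$ distinct points of $\Omega$ is a point the paper leaves implicit.
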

\begin{proof} 
 By contradiction, assume there exists $\alpha \in \aut_{\kk}(\xx)$ which does not permute the $\Omega_i$'s. Let $D = P_1+\ldots+ P_m$. Then the support of $\alpha(D)$ is contained in more than one of the short orbits $\Omega_i$. In particular, $ 1 = \dim (|D|) = \dim (|\alpha(D)|)$, a contradiction by Lemma  \ref{linearseries}.
\end{proof}

\begin{thm}
If $n = m$, then $G=C_m \times C_m$ is normal in $\aut_{\kk}(\xx)$, and
$$
\aut_{\kk}(\xx)/G \cong \begin{cases}
C_2 \times C_2, \ \text{if $a\neq 1$;}\\
D_4, \ \text{if $a = 1$. }
\end{cases}
$$ 
\end{thm}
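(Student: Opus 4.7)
The plan is to analyze the homomorphism $\rho\colon \aut_\kk(\xx) \to S_{\{\Omega_1,\Omega_2,\Omega_3,\Omega_4\}}$ given by the preceding proposition, in three steps: compute $\ker\rho$, constrain the image to a copy of $D_4$ in $S_4$, then decide when it equals all of $D_4$ versus only $V\cong C_2\times C_2$.

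For the kernel, first note that every generator of $G$ preserves each $\Omega_i$ as a set ($\{1\}\times C_m$ fixes $\Omega_1\cup\Omega_2$ pointwise and acts simply transitively on each of $\Omega_3,\Omega_4$; symmetrically for $C_n\times\{1\}$), so $G\subseteq\ker\rho$. Conversely, let $\eta\in\ker\rho$. By the proof of Lemma \ref{orbdih},
\[
\div(x)=\sum_{P\in\Omega_3}P-\sum_{Q\in\Omega_4}Q,\qquad \div(y)=\sum_{R\in\Omega_1}R-\sum_{S\in\Omega_2}S,
\]
so $\eta^{*}(x)$ and $\eta^{*}(y)$ have the same divisors as $x$ and $y$, respectively. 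Hence $\eta^{*}(x)=cx$ and $\eta^{*}(y)=dy$ for some $c,d\in\kk^{*}$, and substituting into $ax^my^m+x^m+y^m=1$ forces $c^m=d^m=1$, i.e.\ $\eta\in G$. Thus $\ker\rho=G$, so $G\trianglelefteq\aut_\kk(\xx)$ and $\bar\rho$ embeds $\aut_\kk(\xx)/G$ into $S_4$.

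Next I would show that the image stabilizes the partition $\{\{\Omega_1,\Omega_2\},\{\Omega_3,\Omega_4\}\}$, which is contained in a copy of $D_4$ of order $8$ in $S_4$. The key observation is that $\Omega_1\cup\Omega_2$ and $\Omega_3\cup\Omega_4$ are precisely the fixed-point sets of the subgroups $\{1\}\times C_m$ and $C_n\times\{1\}$ of $G$, each of cardinality $2m$. Any cyclic subgroup of $G$ of order $m$ different from these two admits a generator $(\sigma_1^a,\sigma_2^b)$ with $\sigma_1^a\neq 1$ and $\sigma_2^b\neq 1$; such an element lies in no nontrivial $G$-stabilizer of a point of $\xx$ (the only nontrivial $G$-stabilizers being $\{1\}\times C_m$ and $C_n\times\{1\}$), hence has no fixed points at all, so the subgroup itself has no fixed points. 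Since $G$ is normal, for any $\eta\in\aut_\kk(\xx)$ the conjugate $\eta(\{1\}\times C_m)\eta^{-1}$ is a cyclic subgroup of $G$ of order $m$ whose fixed set $\eta(\Omega_1\cup\Omega_2)$ still has cardinality $2m$, forcing it to equal $\Omega_1\cup\Omega_2$ or $\Omega_3\cup\Omega_4$. Thus $\bar\rho(\eta)$ preserves the partition.

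Finally I would identify the image inside this $D_4$. For every $a\in\kk^{*}$, the automorphisms $\mu$ and $\theta$ realize $\bar\mu=(\Omega_1\Omega_2)(\Omega_3\Omega_4)$ and $\bar\theta=(\Omega_1\Omega_3)(\Omega_2\Omega_4)$, generating the Klein four subgroup $V$ of index $2$ in $D_4$. Hence the image is either $V$ or $D_4$, and the latter holds iff it contains one of the two transpositions $(\Omega_1\Omega_2)$ or $(\Omega_3\Omega_4)$ lying inside a part. When $a=1$, the automorphism $\tau_1$ yields $\bar\tau_1=(\Omega_1\Omega_2)$, so the image is $D_4$. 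Conversely, any $\eta\in\aut_\kk(\xx)$ with $\bar\rho(\eta)=(\Omega_1\Omega_2)$ preserves the zeros and poles of $x$ separately and swaps those of $y$, so Lemma \ref{interchange} forces $a=1$; the case $(\Omega_3\Omega_4)$ is handled symmetrically by conjugating with $\theta$. The main obstacle is the middle step, the partition-preservation property; once that is in place the rest is routine.
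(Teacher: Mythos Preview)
Your proof is correct, and your middle step is genuinely different from the paper's. The paper never argues that the partition $\{\{\Omega_1,\Omega_2\},\{\Omega_3,\Omega_4\}\}$ is preserved; instead it invokes the linear-series computation of Lemma~\ref{linearseries} to show that no element of $\aut_\kk(\xx)$ can act as a $3$-cycle on $\{\Omega_1,\ldots,\Omega_4\}$, which forces the image to sit inside a Sylow $2$-subgroup of $S_4$. Your route is more group-theoretic and more self-contained: once you know $G=\ker\rho$ is normal, conjugation permutes the cyclic order-$m$ subgroups of $G$, and since only $\langle\sigma_1\rangle$ and $\langle\sigma_2\rangle$ have nonempty fixed locus (their fixed sets being exactly $\Omega_3\cup\Omega_4$ and $\Omega_1\cup\Omega_2$), the partition is forced to be invariant. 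This bypasses Lemma~\ref{linearseries} entirely and lands you directly in the \emph{specific} copy of $D_4$ stabilising that partition, whereas the paper's $3$-cycle exclusion a priori allows any of the three Sylow $2$-subgroups containing $V$ and then relies on the explicit automorphisms and Lemma~\ref{interchange} to finish. The kernel computation and the final dichotomy via $\mu,\theta,\tau_1$ and Lemma~\ref{interchange} are essentially the same in both arguments.
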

\begin{proof} Let $\alpha \in \aut(\xx)$ be such that $\alpha(\Omega_i) = \Omega_i$ for each $ i \in \{1,2,3,4\}$. Then $\alpha(x) = c_1x$ and $\alpha(y) = c_2y$. It is then straightforward to see that $c_1, c_2$  are $m$-th roots of the unity, whence $\alpha \in G$. This means that the kernel of the representation of $\aut_{\kk}(\xx)$ as permutation group on $4$ letters is $G$, which is hence a normal subgroup of $\aut_{\kk}(\xx)$. Also,  $|\aut_{\kk}(\xx)| \leq 24m^2$. We claim that there is no automorphism  fixing one of the short orbits and acting as a $3$-cycle on the other three. By contradiction, assume that such an automorphism $\alpha$ exists; without loss of generality, we may assume that $\alpha(\Omega_3) = \Omega_3$. Then either $\alpha(\Omega_4) = \Omega_1$ or $\alpha(\Omega_4) = \Omega_2$, whence  $\alpha(x)$ either belongs to the Riemann-Roch space $\mathcal{L}(R_1+\ldots + R_m)$ or $\mathcal{L}(S_1+\ldots +S_m)$. Then  $\alpha(x) = a_1 +\frac{b_1}{y}$ or $a_2 + b_2y$ by Lemma \ref{linearseries}. Via straightforward computations, one can see that neither of the latter function can have zeroes in $\Omega_3$. The discussion at the beginning of the section finishes the proof. 
\end{proof}

\begin{rem}
The results of subsections $6.1$ and $6.2$ imply that $\aut_{\kk}(\xx) = \aut_{\fqq}(\xx)$. 
\end{rem}

\begin{rem}
We saw in RemarK \ref{overlap} that the curves $X^nY^2+X^n+Y^2=1$ and $\bar{X}^{2n}+\bar{Y}^2=1$ are birationally equivalent. It is not difficult to show that this is the only case of overlap between curves of type (I) and (II), listed at the beginning of this section.
\end{rem}


\subsection*{Acknowledgments}
The first author was supported by FAPESP-Brazil, grant 2013/00564-1. The second author was partially supported by GNSAGA - Gruppo Nazionale per le Strutture Algebriche, Geometriche e le loro Applicazioni of Italian INdAM.
The authors would also like to thank G\'abor Korchm\'aros and Massimo Giulietti for many useful conversations on the topic of this article.

\printindex

\end{document}